\documentclass[11pt,reqno]{amsart}
\usepackage{amsmath, amssymb, amsthm,amsfonts}
\usepackage{enumitem}
\usepackage{graphicx,color}
\usepackage{graphics}
\usepackage{comment}
\usepackage[OT2,T1]{fontenc}
\usepackage{tikz}
\DeclareSymbolFont{cyrletters}{OT2}{wncyr}{m}{n}
\DeclareMathSymbol{\Sha}{\mathalpha}{cyrletters}{"58}

\DeclareMathOperator{\SL}{SL}
\DeclareMathOperator{\DSL}{\widetilde{SL}}
\DeclareMathOperator{\GL}{GL}
\DeclareMathOperator{\PGL}{PGL}
\DeclareMathOperator{\Sh}{Sh}

\def\ov{\overline}

\newcommand{\Hup}{\mathbb{H}}

\newcommand{\ord}{{\rm ord}}

\newcommand{\A}{\mathbb{A}}

\newcommand{\Qs}{\mathbb{Q}^{\times}}

\newcommand{\Q}{{\mathbb Q}}
\newcommand{\Z}{{\mathbb Z}}

\newcommand{\C}{{\mathbb C}}
\newcommand{\R}{{\mathbb R}}

\newcommand{\kro}[2]{\left( \frac{#1}{#2} \right) }

\newcommand{\hs}[1]{\left( #1 \right)_p}
\newcommand{\hst}[1]{\left( #1 \right)_2}

\newcommand{\hsbt}[1]{\Big( #1 \Big)_2}
\newcommand{\mat}[4]{\left(\begin{matrix} #1 & #2 \\ #3 & #4 \end{matrix}\right)}
\newcommand{\smallmat}[4]{\left(\begin{smallmatrix} #1 & #2 \\ #3 & #4 \end{smallmatrix}\right)}

            \DeclareFontFamily{U}{wncy}{} 
            \DeclareFontShape{U}{wncy}{m}{n}{%
               <5>wncyr5%
               <6>wncyr6%
               <7>wncyr7%
               <8>wncyr8%
               <9>wncyr9%
               <10>wncyr10%
               <11>wncyr10%
               <12>wncyr6%
               <14>wncyr7%
               <17>wncyr8%
               <20>wncyr10%
               <25>wncyr10}{} 
\DeclareMathAlphabet{\cyr}{U}{wncy}{m}{n}

\begin{document}

\newtheorem{thm}{Theorem}
\newtheorem*{thms}{Theorem}
\newtheorem{lem}{Lemma}[section]
\newtheorem{prop}[lem]{Proposition}
\newtheorem{alg}[lem]{Algorithm}
\newtheorem{cor}[lem]{Corollary}
\newtheorem{conj}[lem]{Conjecture}
\newtheorem{remark}{Remark}

\theoremstyle{definition}

\newtheorem{ex}{Example}

\theoremstyle{remark}

\title[Minus space of level $8M$]{Newforms of half-integral weight: the minus space of 
$S_{k+1/2}(\Gamma_0(8M))$
}

\author{Ehud Moshe Baruch}
\address{Department of Mathematics\\
         Technion\\
         Haifa , 32000\\
         Israel}
\email{embaruch@math.technion.ac.il}
\thanks{}
\author{Soma Purkait}
\address{Tokyo University of Science, Noda\\
         Japan}
\email{somapurkait@gmail.com}
\keywords{Hecke algebras, Half-integral weight forms, Ueda-Niwa isomorphism, 
Newforms}
\subjclass[2010]{Primary: 11F37; Secondary: 11F12, 11F70}
\date{July 2017}

\begin{abstract}
We compute generators and relations for a certain 
$2$-adic Hecke algebra of level $8$ associated
with the double cover of $\SL_2$ and a $2$-adic 
Hecke algebra of level $4$ associated with $\PGL_2$. 
We show that these two Hecke algebras are isomorphic as 
expected from the Shimura correspondence.
We use the $2$-adic generators to define classical Hecke operators 
on the space of holomorphic modular forms of weight 
$k+1/2$ and level $8M$ where $M$ is odd and square-free. 
Using these operators and our previous results
on half-integral weight forms of level $4M$ we define a subspace 
of the space of half-integral weight forms as a
common $-1$ eigenspace of certain Hecke operators. 
Using the relations and a result of Ueda we show that this
subspace which we call the minus space is isomorphic as a 
Hecke module under the Ueda correspondence to
the space of new forms of weight $2k$ and level $4M$. 
We observe that the forms in the minus space satisfy a Fourier coefficient
condition that gives the complement of the plus space but 
does not define the minus space.
\end{abstract}
\maketitle

\section{Introduction}
It has been observed by Waldspurger \cite{Waldspurger} that the theory of 
half-integral weight modular forms is more complicated for levels of the forms 
$8M$ and $16M$ where $M$ is an odd integer. In particular, one of the 
Waldspurger's hypothesis, (H2), for his celebrated 
formula for central twisted L-values for an integral weight newform requires 
the newform to be either not supercuspidal at $2$ or has level divisible by $16$, 
that is, the corresponding half-integral weight ``newform'' should have 
level $4M$ or level divisible by $32$.
Ueda in an unpublished work \cite{Ueda-un} observed that the 
levels $2^k M$ where $2\leq k \leq 6$ pose greater 
difficulties than general $k$. In this work we complete the work of Ueda to
obtain a decomposition of the space of holomorphic forms of weight $k+1/2$
and level $8M$ where $M$ is odd and square-free. An attempt at such theory has 
been made by Manickam, Meher, Ramakrishnan \cite{M-M-R} but our results and 
methods differ completely. For more details on this difference see 
Remark~\ref{rem:MMR} in Section~\ref{sec:minus}. 

Kohnen \cite{Kohnen1, Kohnen2} defines the 
plus space $S_{k+1/2}^{+}(4M)$ to be a subspace of 
$S_{k+1/2}(\Gamma_0(4M))$ which is the positive eigenspace of a certain
Hecke operator. In the case $M=1$, Loke and Savin~\cite{L-S} gave an 
interpretation of the Kohnen plus space in representation theory language 
using a $2$-adic Hecke algebra of level $4$.
Manickam, Ramakrishnan, Vasudevan \cite{M-R-V} defined a new subspace 
inside this space as an orthogonal complement of certain subspaces. 
In \cite{B-PII} we used Hecke algebra 
approach to show that the new subspace is given as a common $-1$ eigenspace 
of certain Hecke operators. In this paper we extend this approach by considering 
$2$-adic Hecke algebras of level $8$ and describe it using generators 
and relations. 
We obtain an isomorphism between certain $2$-adic Hecke algebras which is 
part of the Shimura correspondence. We translate $2$-adic Hecke elements 
into classical Hecke operators that satisfies the relations coming from the $2$-adic elements. Once we have 
these operators and relations we combine it with our results for the case 
$4M$ to define the minus space of level $8M$ as a common $-1$ eigenspace of 
certain classical Hecke operators. Using the Ueda Hecke isomorphism between 
$S_{k+1/2}(\Gamma_0(8M))$ and $S_{2k}(\Gamma_0(4M))$ we show that the minus space 
at level $8M$ is Hecke isomprphic to the subspace of new forms in 
$S_{2k}(\Gamma_0(4M))$ and satisfies the strong-multiplicity one. 
Further if 
$f = \sum_{n=1}^{\infty} a_n q^n$ is in the minus space at level $8M$ then 
$a_n=0$ for $(-1)^k n \equiv 0, 1 \pmod{4}$. This condition is 
exactly opposite to the Kohnen's plus space Fourier coefficient condition 
and is contrary to the 
assertion of \cite{M-M-R}.

\section{Preliminaries and Notation}
We will be following the notation of our paper~\cite{B-PII}.
Let $k,\ N$ denote positive integers with $4$ dividing $N$.
Let $\mathcal{G}$ be the set of all ordered pairs $(\alpha, \phi(z))$ where 
$\alpha = \smallmat{a}{b}{c}{d} \in \GL_2(\R)^{+}$ and $\phi(z)$ is a holomorphic 
function on the upper half plane $\Hup$ such that 
$\phi(z)^2 = t\det(\alpha)^{-1/2}(cz+d)$ with $t$ in the unit circle 
$S^1$. 
For $\zeta = (\alpha, \phi(z)) \in \mathcal{G}$ define the slash operator 
$|[\zeta]_{k+1/2}$ 
on functions $f$ 
on $\Hup$ 
by $f|[\zeta]_{k+1/2}(z) = f(\alpha z)(\phi(z))^{-2k-1}$. 
For an even Dirichlet character $\chi$ modulo $N$, let
\[\Delta_0(N, \chi):=\{\alpha^* = (\alpha, j(\alpha,z)) \in 
\mathcal{G}\ |\ \alpha \in \Gamma_0(N)\} \leq \mathcal{G}.\] 
where $j(\alpha,z) = \chi(d)\varepsilon_d^{-1}\kro{c}{d}(cz+d)^{1/2}$,
here $\varepsilon_d = 1$ or $i$ according as 
$d \equiv 1\text{ or }3 \pmod{4}$ and $\kro{c}{d}$ is as 
in Shimura's notation \cite{Shimura}. The space of holomorphic cusp forms
$S_{k +1/2}(\Gamma_0(N), \chi)$ satisfy 
$f|[\alpha^*]_{k +1/2}(z) =f(z)$ for all $\alpha \in \Delta_0(N, \chi)$ and 
we have well-known family of Hecke operators $\{T_{p^2}\}_{p \nmid N},\ \{U_{p^2}\}_{p\mid N}$
on $S_{k +1/2}(\Gamma_0(N), \chi)$ (please refer to \cite{Shimura} for details).

Let $\DSL_2(\Q_p)$ be the non-trivial central extension of $\SL_2(\Q_p)$ 
by $\mu_2 = \{\pm1\}$ given by Kubota-Gelbart $2$-cocycle defined below. 

For $g =\mat{a}{b}{c}{d} \in \SL_2(\Q_p)$, define
\[\tau(g) = \begin{cases}
         c & \text{if $c \ne 0$}\\
         d & \text{if $c = 0$}
        \end{cases}; \]
if $p=\infty$, set $s_p(g)=1$ while for a finite prime $p$
\[s_p(g) = \begin{cases}
         \hs{c,d} & \text{if $cd \ne 0$ and $\ord_p(c)$ is odd}\\
         1 & \text{else}.
        \end{cases}
\]
Define the $2$-cocycle $\sigma_p$ on $\SL_2(\Q_p)$ as follows:
\[\sigma_p(g, h) = \hs{\tau(gh)\tau(g),\tau(gh)\tau(h)}s_p(g)s_p(h)s_p(gh).\]
Then the double cover 
$\DSL_2(\Q_p)$ is the set $\SL_2(\Q_p) \times \mu_2$ with the group law:
\[(g,\ \epsilon_1)(h,\ \epsilon_2) = (gh,\ \epsilon_1\epsilon_2\sigma_p(g, h)).\]
For any subgroup $H$ of $\SL_2(\Q_p)$, we shall denote by $\ov{H}$ the complete 
inverse image of $H$ in $\DSL_2(\Q_p)$. 

We consider the following subgroups of $\SL_2(\Z_p)$:
\[K_0^p(p^n) =\left\{ \mat{a}{b}{c}{d} \in \SL_2(\Z_p)\ :\ c \in p^n\Z_p \right\},\]
\[K_1^p(p^n) =\left\{ \mat{a}{b}{c}{d} \in K_0^p(p^n)\ :\  a \equiv 1 \pmod{p^n\Z_p} \right\}.\]
 
By \cite[Proposition 2.8]{Gelbart}, 
$\DSL_2(\Q_p)$ splits over $\SL_2(\Z_p)$ for odd primes $p$, while 
$\DSL_2(\Q_2)$ does not split over $\SL_2(\Z_2)$ and instead splits 
over the subgroup $K_1^2(4)$. It follows that the 
center $M_2$ of $\DSL_2(\Q_2)$ is a cyclic group of order $4$ 
generated by $(-I,1)$.

For an open compact subgroup $S$ of $\DSL_2(\Q_p)$ and 
a genuine character $\gamma$ of $S$, let
$H(S, \gamma)$ be the subalgebra of $C_c^{\infty}(\DSL_2(\Q_p))$ defined by \\
\[\{ f \in C_c^{\infty}(\DSL_2(\Q_p)) : 
f(\tilde{k}\tilde{g}\tilde{k'})=\overline{\gamma}(\tilde{k})\overline{\gamma}(\tilde{k'})f(\tilde{g}) 
\text{ for } \tilde{g} \in \DSL_2(\Q_p),\ \tilde{k},\ \tilde{k'} \in S\}\]
under the usual convolution, 
\[ f_1*f_2(\tilde{h}) = 
\int_{\DSL_2(\Q_p)} f_1(\tilde{g})f_2(\tilde{g}^{-1}\tilde{h}) d\tilde{g} = 
\int_{\DSL_2(\Q_p)} f_1(\tilde{h}\tilde{g})f_2(\tilde{g}^{-1}) d\tilde{g}, \]
Loke and Savin~\cite{L-S} described $H(\ov{K_0^2(4)}, \gamma)$ for a certain 
$\gamma$ of order $4$ using generators 
and relations. We \cite{B-PII} described $H(\ov{K_0^p(p)}, \gamma)$ for odd 
primes $p$ and certain quadratic characters $\gamma$. We translated the elements of local Hecke algebra to obtain 
classical operators $\widetilde{Q}_p$, $\widetilde{Q}'_p$, 
$\widetilde{W}_{p^2}$ on 
$S_{k +1/2}(\Gamma_0(2^nM))$ for $p$ strictly dividing $M$, $M$ odd and 
operators $\widetilde{Q}_2$, $\widetilde{Q}'_2$, 
$\widetilde{W}_{4}$ on 
$S_{k +1/2}(\Gamma_0(4M))$. We shall be using these operators and their properties
in Section~\ref{sec:minus} of this paper. 

We set up a few more notation.
For $s \in \Q_2$, $t \in \Qs_2$ let us define the following elements of 
$\SL_2(\Q_2)$:
\[x(s) =\mat{1}{s}{0}{1},\ y(s)=\mat{1}{0}{s}{1},\  
w(t) = \mat{0}{t}{-t^{-1}}{0},\  h(t)=\mat{t}{0}{0}{t^{-1}}.\]
Let $N=\{(x(s), \epsilon): s \in \Q_2,\ \epsilon = \pm 1\}$, 
$\bar{N}=\{(y(s), \epsilon): s \in \Q_2,\ \epsilon = \pm 1\}$ 
and $T=\{(h(t), \epsilon): t \in \Qs_2,\ \epsilon = \pm 1\}$
be the subgroups of $\DSL_2(\Q_2)$. Then the normalizer
$N_{\DSL_2(\Q_2)}(T)$ of $T$ in $\DSL_2(\Q_2)$ consists of 
elements $(h(t), \epsilon)$, $(w(t), \epsilon)$ for $t \in \Qs_2$.

\section{Hecke Algebra of $\DSL_2(\Q_2)$ modulo $\ov{K_0^2(8)}$}
In this section we shall be describing the local Hecke algebra 
$H(S, \gamma)$ for $S=\ov{K_0^2(8)}$ and certain genuine characters 
$\gamma$ described below.

Let $\gamma$ be a genuine central character given by a sending 
$(-I,1)$ to a primitive fourth root of unity. 
We extend $\gamma$ to $\ov{K_0^2(8)}$ as follows. Extend $\gamma$ to 
$K_1^2(8) \times M_2$ so that it is trivial on $K_1^2(8)$. Since 
$\ov{K_0^2(8)}/(K_1^2(8) \times M_2)$ is generated by $(h(5),1)$ 
it is enough to define it on $(h(5),1)$. 
Since $(h(5),1)$ has order $2$ there are two choices for $\gamma((h(5),1))$. 
We will call the resulting two characters of $\ov{K_0^2(8)}$ as 
$\chi_1$ and $\chi_2$:
\[\chi_1((h(u),1)) = \begin{cases}
                    1 & \text{ if $u \equiv 1,\ 5 \pmod{8\Z_2}$}\\
                    \gamma((-I,1)) & \text{ if $u \equiv 3,\ 7 \pmod{8\Z_2}$,}
                   \end{cases}, \]
\[\chi_2((h(u),1)) = \begin{cases}
                    1 & \text{ if $u \equiv 1 \pmod{8\Z_2}$}\\
                    \gamma((-I,1)) & \text{ if $u \equiv 7 \pmod{8\Z_2}$}\\
                    -1 & \text{ if $u \equiv 5 \pmod{8\Z_2}$}\\
                    -\gamma((-I,1)) & \text{ if $u \equiv 3 \pmod{8\Z_2}$.}
                   \end{cases}\]
Note that for $\gamma = \chi_1,\ \chi_2$ and 
$(A, \epsilon) = (x(s),1)(h(u),1)(y(t),1)(I, \epsilon\delta) 
\in \ov{K_0^2(8)}$ (\cite[Lemma A.4]{B-PII}) we have
\begin{equation}\label{eq:td}
\gamma(A, \epsilon) = \gamma(x(s),1)\cdot \gamma(h(u),1) 
\cdot \gamma(y(t),1) \cdot \gamma(I,\epsilon\delta) =  
\gamma(h(u),1)\cdot \gamma(I,\epsilon\delta).
\end{equation}

We shall from now on denote $K_0^2(8)$ by simply $K_0$. 
Also for $g \in \DSL_2(\Q_2)$ we put $\ov{g}:=(g,1)\in \DSL_2(\Q_2)$.
We shall describe the Hecke algebra $H(\ov{K}_0, \gamma)$ using generators and 
relations.

We have the following proposition. The proof is a routine calculation.
\begin{prop}\label{prop:1}
A complete set of representatives for the double cosets of $\DSL_2(\Q_2)$ modulo 
$\ov{K}_0$ consists of $\ov{g}$ where $g$ varies over the 
following elements of $SL_2(\Q_2)$:
$h(2^n),\ w(2^n)$ for $n \in \Z$, $h(2^n)y(4), h(2^n)y(2)$ for $n \ge 0$, 
$y(4)h(2^{-n}), y(2)h(2^{-n}), w(2^{-n})y(2), y(2)w(2^{-n}), y(2)w(2^{-n})y(2)$ 
for $n \ge 1$, $w(2^{-n})y(4),\ y(4)w(2^{-n}) ,\ y(4)w(2^{-n})y(4),\ 
y(2)w(2^{-n})y(4), y(4)w(2^{-n})y(2)$ for $n \ge 2$ and $y(2)w(2^{-1})y(6)$.

\end{prop}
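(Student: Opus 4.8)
The plan is first to descend from the metaplectic cover to $\SL_2(\Q_2)$ itself. Since $\ov{K}_0$ is by definition the \emph{full} preimage of $K_0 = K_0^2(8)$, it contains both lifts $(k,\pm 1)$ of every $k\in K_0$; in particular $(I,-1)\in\ov{K}_0$. Hence for $g_1,g_2\in\SL_2(\Q_2)$ the lifts $\ov{g_1},\ov{g_2}$ lie in a common $\ov{K}_0$-double coset if and only if $g_1,g_2$ lie in a common $K_0$-double coset, any sign discrepancy produced by the cocycle $\sigma_2$ being absorbed by $(I,-1)$. So it suffices to produce representatives for $K_0\backslash\SL_2(\Q_2)/K_0$ and lift them through $g\mapsto\ov{g}=(g,1)$.

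For the reduction I would use the Iwahori factorization of $K_0$: every element of $K_0^2(8)$ has $d\in\Z_2^\times$ and factors as $x(r)h(u)y(s)$ with $r\in\Z_2$, $u\in\Z_2^\times$, $s\in 8\Z_2$. Thus the moves available on a given $g$ are left and right multiplication by $x(\Z_2)$, by $y(8\Z_2)$, and by $h(\Z_2^\times)$. To organize these I would first apply the Cartan decomposition $\SL_2(\Q_2)=\bigsqcup_{n\ge0}\SL_2(\Z_2)h(2^n)\SL_2(\Z_2)$ to fix the scale, and then use the big-cell normal form $g=x(s_1)w(t)x(s_2)$ (valid whenever the lower-left entry is nonzero, with $t$ determined by that entry) to reduce $s_1,s_2$ modulo $\Z_2$. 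The conceptual bookkeeping is cleanest on the Bruhat--Tits tree $\mathcal{T}$ of $\SL_2(\Q_2)$: writing $L_j=\Z_2 e_1\oplus 2^j\Z_2 e_2$, one checks $K_0=\{g:gL_0=L_0,\ gL_3=L_3\}$, so $K_0$ is exactly the pointwise stabiliser of the geodesic path $v_0 v_1 v_2 v_3$ of length $3$. Consequently $\SL_2(\Q_2)/K_0$ is the set of directed length-$3$ paths in $\mathcal{T}$, and $K_0\backslash\SL_2(\Q_2)/K_0$ is the set of relative positions of an ordered pair of such paths. This predicts the shape of the answer: the collinear configurations, in which the second path lies on the same apartment as the first, give exactly the two $\Z$-families $h(2^n)$ (same orientation) and $w(2^n)$ (reversed orientation), while a path branching off the apartment accounts for the families carrying the factors $y(2),y(4),y(6)$.

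Concretely I would then read off a representative matrix from each tree configuration and reduce it to normal form under the elementary moves above, sorting the outcomes by the depth $n$ at which the two paths separate; the discrete factors $y(2),y(4),y(6)$ record the finitely many branching directions at a separation vertex (the tree being $3$-regular), and the presence or absence of $w(2^{-n})$ records whether the two directed endpoints point the same way or opposite ways. Distinctness of the listed representatives I would certify by a $K_0$-biinvariant, such as the unordered pair of residues of the off-diagonal entries together with $\ord_2$ of the lower-left entry. \textbf{The main obstacle is the small-$n$ boundary.} When the two paths are close their forced overlap couples the left and right reductions, shrinking the range of reachable residues and merging some configurations; this is what produces the thresholds $n\ge1$ and $n\ge2$ in the several families and, in the tightest case, the single exceptional representative $y(2)w(2^{-1})y(6)$. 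Verifying that exactly these boundary cases occur, with neither an omitted configuration nor two $K_0$-equivalent entries on the list, is the delicate part of the otherwise routine computation.
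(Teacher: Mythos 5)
The paper offers no written argument here beyond ``a routine calculation'' (the intended method, visible in a commented-out lemma of the source, is the direct criterion that $K_0AK_0=K_0BK_0$ iff $AkB^{-1}\in K_0$ for some $k\in K_0$, applied entrywise), so your proposal is really being measured against that implicit matrix computation. Your route is genuinely different and, I think, sound. The opening reduction is correct: since $\ov{K}_0$ is the full preimage of $K_0$ it contains $(I,-1)$, so $\ov{K}_0(g,1)\ov{K}_0$ is the full preimage of $K_0gK_0$ and the double cosets upstairs and downstairs biject. The Iwahori factorization $K_0=x(\Z_2)h(\Z_2^\times)y(8\Z_2)$ is exactly the paper's triangular decomposition, and your building-theoretic picture checks out: with $L_j=\Z_2e_1\oplus 2^j\Z_2e_2$ one does get $K_0=\{g:gL_0=L_0,\ gL_3=L_3\}$, the pointwise stabilizer of the length-$3$ geodesic $v_0v_1v_2v_3$, and $K_0$ does act transitively on the spheres around that segment (e.g.\ $x(1)$ swaps the two free directions at $v_0$ and $y(8)$ the two at $v_3$), which is what makes ``relative position of two directed length-$3$ paths'' a complete invariant of the double coset. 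This buys a structural explanation the paper's computation does not: the sixteen generic $n$-families are exactly the $4\times 4$ choices of which vertex of each path the connecting geodesic meets, and the thresholds and the lone exception $y(2)w(2^{-1})y(6)$ are the overlapping configurations. What the paper's brute-force criterion buys instead is that it needs no transitivity lemma and verifies distinctness mechanically.

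Three caveats, none fatal. First, $\SL_2(\Q_2)$ preserves vertex types, so $\SL_2(\Q_2)/K_0$ is only the set of directed length-$3$ paths \emph{starting at a type-$0$ vertex}, not all such paths; harmless, but it matters if you count. Second, your suggested certificate of distinctness is not actually a bi-invariant: $\ord_2$ of the lower-left entry already fails on $K_0=K_0IK_0=K_0y(8)K_0$, where it takes the values $\infty$ and $3$. You should discard it and certify distinctness by the tree data itself (the array of distances $d(u_i,v_j)$), which is manifestly $K_0$-invariant and separates the listed representatives. Third, like the paper, you stop short of the actual case-by-case enumeration of the overlapping configurations; since that is precisely where the thresholds $n\ge 1$, $n\ge 2$ and the exceptional coset live, the proposal is a correct framework rather than a finished verification --- but it is no less complete than what the paper prints.
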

We will now compute the support of $H(\ov{K}_0, \chi_1)$ and $H(\ov{K}_0, \chi_2)$.
We first have the following lemma on vanishing. 
\begin{lem}
The Hecke algebra $H(\ov{K}_0, \chi_1)$ vanishes on the double cosets of $\ov{K}_0$ 
represented by $\ov{y}(2)$, $\ov{y}(2)\ov{w}(2^{-n})$, $\ov{w}(2^{-n})\ov{y}(2)$, 
$\ov{y}(2)\ov{w}(2^{-n})\ov{y}(2)$, $\ov{h}(2^n)\ov{y}(2)$, $\ov{y}(2)\ov{h}(2^{-n})$, $\ov{y}(2)\ov{w}(2^{-1})\ov{y}(6)$ for $n \ge 1$ and 
$\ov{y}(2)\ov{w}(2^{-n})\ov{y}(4)$, $\ov{y}(4)\ov{w}(2^{-n})\ov{y}(2)$ for 
$n \ge 2$.

The Hecke algebra $H(\ov{K}_0, \chi_2)$ vanishes on the double cosets of $\ov{K}_0$ represented by 
$\ov{y}(4)$, $\ov{y}(4)\ov{h}(2^{-n})$, $\ov{h}(2^{n})\ov{y}(4)$ for $n \ge 1$, 
$\ov{y}(2)\ov{w}(2^{-1})\ov{y}(6)$, and $\ov{y}(2)\ov{w}(2^{-n})\ov{y}(4)$, 
$\ov{y}(4)\ov{w}(2^{-n})\ov{y}(2)$, $\ov{w}(2^{-n})\ov{y}(4)$, 
$\ov{y}(4)\ov{w}(2^{-n})$, $\ov{y}(4)\ov{w}(2^{-n})\ov{y}(4)$ where $n \ge 2$.
\end{lem}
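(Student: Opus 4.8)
The plan is to use the standard support criterion for a Hecke algebra attached to a character. Since $\gamma$ is a genuine character of $\ov K_0$ and every $f\in H(\ov K_0,\gamma)$ satisfies $f(\tilde k_1\tilde g\tilde k_2)=\ov\gamma(\tilde k_1)\ov\gamma(\tilde k_2)f(\tilde g)$, the value $f(\ov g)$ is forced to be $0$ as soon as there exist $\tilde k_1,\tilde k_2\in\ov K_0$ with $\tilde k_1\ov g\tilde k_2=\ov g$ and $\gamma(\tilde k_1)\gamma(\tilde k_2)\ne1$, and then $f$ vanishes on the whole double coset $\ov K_0\,\ov g\,\ov K_0$. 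Writing $\tilde k_2=\ov g^{-1}\tilde k_1^{-1}\ov g$, this is equivalent to exhibiting a single $\tilde k\in\ov K_0\cap\ov g\,\ov K_0\,\ov g^{-1}$ with $\gamma(\tilde k)\ne\gamma(\ov g^{-1}\tilde k\,\ov g)$. So for each representative $g$ in the two lists I will produce such a witness.

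The natural witnesses are lifts $\ov k=(k,1)$ of elements $k=\smallmat{a}{b}{c}{d}\in K_0$, so $c\in8\Z_2$, which forces $ad\equiv1\pmod 8$ and hence $a\equiv d\pmod 8$. Two facts drive the computation. First, by the Bruhat normal form and \eqref{eq:td}, $\gamma(\ov k)$ depends only on the lower-right entry $d$: on its class mod $4$ for $\chi_1$ and mod $8$ for $\chi_2$, since $\chi_1(\ov h(u))$ sees only $u\bmod 4$ while $\chi_2(\ov h(u))$ separates all four classes mod $8$; the two characters differ precisely by the quadratic character detecting $u\bmod 8$ inside a fixed class mod $4$. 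Second, conjugating $k$ by $y(2^{e})$ replaces $d$ by $d-2^{e}b$ and multiplies the lift by the Kubota sign accumulated from $\sigma_2$ along the conjugation; thus the change in $\gamma$ factors as a diagonal contribution (the shift $d\mapsto d-2^{e}b\pmod 8$) times a central sign in $\{\pm1\}$.

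For the $\chi_1$-list I use the factors of type $y(2)$, i.e.\ $\ord_2(c)=1$ (so $y(2)$ and $y(6)$). Here membership of $\ov g^{-1}\ov k\,\ov g$ in $\ov K_0$ forces $b$ even, so $d-2b\equiv d\pmod 4$ and the diagonal contribution to $\chi_1$ is trivial; the vanishing must therefore come from the central sign, and the crux is the cocycle computation showing that for a suitable $k$ with $b\equiv2\pmod 4$ (e.g.\ $k=\smallmat{17}{2}{8}{1}$ for $g=\ov y(2)$) the accumulated $\sigma_2$-sign is $-1$, whence $\gamma(\ov k)\ne\gamma(\ov g^{-1}\ov k\,\ov g)$ for $\chi_1$. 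The same witness gives $d-2b\equiv d+4\pmod 8$, i.e.\ a factor $-1$ for $\chi_2$, which cancels the central sign, so it does \emph{not} force $\chi_2$ to vanish, consistent with $y(2)$ lying in the $\chi_1$-list only. For the $\chi_2$-list I use the factors of type $y(4)$, i.e.\ $\ord_2(c)=2$: now the $K_0$-stability imposes no parity restriction on $b$, so I may take $b$ odd, giving $d-4b\equiv d+4\pmod 8$ (a genuine $-1$ for $\chi_2$, invisible to $\chi_1$), while the parity $\ord_2(c)=2$ makes the accumulated $\sigma_2$-sign trivial; hence $\chi_2$ vanishes but $\chi_1$ does not. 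The mixed representatives $\ov h(2^n)\ov y(2)$, $\ov y(2)\ov h(2^{-n})$, $\ov y(2)\ov w(2^{-n})$, $\ov w(2^{-n})\ov y(2)$, $\ov y(2)\ov w(2^{-n})\ov y(2)$ and their $y(4)$-analogues reduce to these base cases because $h(2^n)$ and $w(2^{-n})$ normalize $T$ and only rescale (resp.\ invert) the $y$-factor; the element $\ov y(2)\ov w(2^{-1})\ov y(6)$, which appears in both lists, is handled by drawing the $\chi_1$-witness from its left $y(2)$-factor and a separate $\chi_2$-witness from the right $y(6)$-factor.

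The main obstacle is the Kubota-cocycle bookkeeping: one must evaluate the product of Hilbert symbols $\sigma_2$ along $(y(2^{e}),1)^{-1}(k,1)(y(2^{e}),1)$ and verify that its dichotomy is governed by the parity of $\ord_2$ of the relevant lower-left entries, so that the $\ord_2=1$ factors contribute a central $-1$ (feeding $\chi_1$) while the $\ord_2=2$ factors contribute $+1$ (so that the $d\mapsto d+4$ shift feeds $\chi_2$). Everything else is the routine translation between $K_0$-matrices, their Bruhat normal forms via \eqref{eq:td}, and the tabulated values of $\chi_1,\chi_2$ on the torus.
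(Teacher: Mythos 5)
Your proposal is correct and follows essentially the same route as the paper: both rest on the criterion that $H(\ov{K}_0,\gamma)$ vanishes on $\ov{K}_0\tilde g\ov{K}_0$ once a single $\tilde k\in\ov{K}_0\cap\tilde g\ov{K}_0\tilde g^{-1}$ satisfies $\gamma([\tilde k^{-1},\tilde g^{-1}])\neq 1$ (your reformulation $\gamma(\tilde k)\neq\gamma(\tilde g^{-1}\tilde k\,\tilde g)$ is equivalent), followed by exhibiting explicit witnesses case by case, and your sample witness $\smallmat{17}{2}{8}{1}$ for $\ov y(2)$ does check out (the conjugate is $(\smallmat{21}{2}{-32}{-3},-1)$, whose central factor $-1$ is detected by $\chi_1$ but cancelled for $\chi_2$ by the shift $d\mapsto d+4$). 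The organizing dichotomy you add — the parity of $\ord_2$ of the unipotent factor deciding which of $\chi_1,\chi_2$ vanishes — is a useful gloss on, but not a departure from, the paper's argument, which likewise works one witness in full and declares the remaining cases similar.
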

\begin{proof}
Recall that (\cite[Lemma3.1]{B-PII})
$H(\ov{K}_0, \gamma)$ is supported on $\tilde{g}$ 
if and only if for every $\tilde{k} \in 
K_{\tilde{g}}:=\ov{K}_0 \cap \tilde{g} \ov{K}_0 \tilde{g}^{-1}$ we have 
$\gamma([\tilde{k}^{-1},\ \tilde{g}^{-1}]) = 1$. So 
to check the vanishing at $\tilde{g}$ we need to just find suitable 
$\tilde{k}$.

For example, for 
$A=y(2)w(2^{-n})$, take $B= \mat{-3}{2}{-8}{5}$. Then
 $\ov{B} \in K_{\ov{A}} $ and 
 \[[\ov{B}^{-1},\ \ov{A}^{-1}] = \left(\mat{5+2^{2n+2}}{-2}{8+3\cdot2^{2n+1}}{-3},\ -1\right);\]
The above commutator is
of the form $\left(\mat{-3}{*}{0}{-3}\pmod{8\Z_2},\ -1\right)$ 
and in its triangular decomposition (as in equation~\ref{eq:td}) 
$\delta=1$. Since $\chi_1$ takes value $-1$, the vanishing of 
$H(\ov{K}_0, \chi_1)$ follows on the double coset of $A$. 
The vanishing of $H(\ov{K}_0, \chi_1),\ H(\ov{K}_0, \chi_2)$ at 
the double cosets listed in the lemma follow similarly. 
\end{proof}
  
\begin{lem}\label{lem:sup1}
$H(\ov{K}_0, \chi_1)$ and $H(\ov{K}_0, \chi_2)$ are supported on the
double cosets of $\ov{K}_0$ represented by $\ov{h}(2^n)$ and $\ov{w}(2^{-n})$ 
for $n\in \Z$.

$H(\ov{K}_0, \chi_1)$ is supported on $\ov{y}(4)$ while 
$H(\ov{K}_0, \chi_2)$ is supported on $\ov{y}(2)$.
\end{lem}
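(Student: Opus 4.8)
The plan is to apply the support criterion recalled in the proof of the preceding lemma: $H(\ov{K}_0,\gamma)$ is supported on a double coset $\ov{K}_0\tilde{g}\ov{K}_0$ precisely when $\gamma([\tilde{k}^{-1},\tilde{g}^{-1}])=1$ for every $\tilde{k}\in K_{\tilde{g}}:=\ov{K}_0\cap\tilde{g}\ov{K}_0\tilde{g}^{-1}$. By Proposition~\ref{prop:1} the representatives listed there exhaust all double cosets, and the preceding lemma already disposes of the vanishing ones, so it remains only to verify the criterion positively on $\ov{h}(2^n)$ and $\ov{w}(2^{-n})$ for both characters and on $\ov{y}(4)$ for $\chi_1$ and $\ov{y}(2)$ for $\chi_2$. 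For each representative I would compute $K_{\tilde{g}}$ explicitly by intersecting $K_0$ with $gK_0g^{-1}$, then compute the $\SL_2(\Q_2)$-projection of the commutator together with its triangular normal form $x(s)h(u)y(t)(I,\epsilon\delta)$ as in \eqref{eq:td}, and finally read off $\gamma$ as the product $\gamma(h(u),1)\,\gamma(I,\epsilon\delta)$.

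First I would treat the diagonal representatives. For $g=h(2^n)$ with $n\ge 0$ one finds $K_{\tilde{g}}$ is the set of $\smallmat{a}{b}{c}{d}\in K_0$ with $b\in 2^{2n}\Z_2$, and a direct multiplication shows the commutator has $(1,1)$-entry $u=1+bc(1-2^{2n})\equiv 1\pmod{8\Z_2}$; hence $\chi_1(h(u),1)=\chi_2(h(u),1)=1$ by the defining formulas (the case $n<0$, and the anti-diagonal representatives $w(2^{-n})$, are handled the same way after recording the sign and Hilbert-symbol factors that $\tau$ and $s_2$ attach to the Weyl element). For the unipotent representatives $g=y(\lambda)$, $\lambda\in\{2,4\}$, the commutator has $(1,1)$-entry $u=1+\lambda ab+\lambda^2 b^2$. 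When $\lambda=4$ one gets $K_{\tilde{g}}=K_0$ and $u\equiv 1,5\pmod{8\Z_2}$, on both of which $\chi_1$ is trivial. When $\lambda=2$ the defining congruence of $K_{\tilde{g}}$, combined with $ad\equiv1\pmod{8\Z_2}$, forces $b$ to be even, and then $u\equiv1,5\pmod{8\Z_2}$ realizes exactly the two values $1$ and $-1$ taken by $\chi_2$.

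The decisive ingredient is the central factor $\gamma(I,\epsilon\delta)$, on which $\gamma$ is nontrivial: since $(-I,1)^2=(I,\hst{-1,-1})=(I,-1)$ one has $\gamma(I,-1)=\gamma((-I,1))^2=-1$. I would evaluate $\epsilon\delta$ from the Kubota--Gelbart cocycle $\sigma_2$. For $g=h(2^n)$ and $g=y(4)$ the commutator projects into $K_1^2(4)$, the subgroup over which $\DSL_2(\Q_2)$ splits, and a cocycle computation gives $\epsilon\delta=1$; thus $\gamma([\tilde{k}^{-1},\tilde{g}^{-1}])=\chi_i(h(u),1)=1$, which places $\ov{h}(2^n)$ in the support of both characters and $\ov{y}(4)$ in the support of $\chi_1$. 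For $g=y(2)$, which does not lie in $K_1^2(4)$, the same computation instead yields $\epsilon\delta=-1$ exactly when $b\equiv2\pmod{4\Z_2}$, that is exactly when $\chi_2(h(u),1)=-1$; the two signs cancel, so $\gamma([\tilde{k}^{-1},\ov{y}(2)^{-1}])=1$ throughout $K_{\ov{y}(2)}$ and $\ov{y}(2)$ lies in the support of $\chi_2$. (The very same central sign, now uncancelled against $\chi_1(h(u),1)=1$, is what forces the vanishing of $\chi_1$ on $\ov{y}(2)$ and of $\chi_2$ on $\ov{y}(4)$ recorded in the preceding lemma, which is a reassuring consistency check.)

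The main obstacle is precisely this cocycle-level computation of $\epsilon\delta$: it must be carried out uniformly in $\tilde{k}\in K_{\tilde{g}}$ and in $n$, keeping careful track of the Hilbert symbols $\hst{\cdot\,,\cdot}$ produced by $\tau$ and $s_2$. For the diagonal and $y(4)$ representatives the splitting over $K_1^2(4)$ removes most of the difficulty, but for the Weyl representatives $w(2^{-n})$ the nonzero anti-diagonal entries force a delicate evaluation of these symbols, and that is where I expect the computation to be heaviest. I would organize the whole argument by always reducing each commutator to the normal form \eqref{eq:td} before applying $\gamma$, so that the diagonal contribution $\chi_i(h(u),1)$ and the central contribution $\gamma(I,\epsilon\delta)$ are isolated and then multiplied.
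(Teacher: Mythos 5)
Your proposal is correct and follows essentially the same route as the paper: the support criterion $\gamma([\tilde{k}^{-1},\tilde{g}^{-1}])=1$ from \cite[Lemma 3.1]{B-PII}, reduction of each commutator to the triangular normal form of \eqref{eq:td} so that $\gamma$ factors as $\chi_i(h(u),1)\cdot\gamma(I,\epsilon\delta)$, and explicit matrix/cocycle computations case by case (your identification of the cancellation $(-1)\cdot(-1)=1$ on $K_{\ov{y}(2)}$ for $\chi_2$ is exactly the computation the paper displays). Like the paper, you carry this out in full only for representative cases and defer the heaviest cocycle bookkeeping (the Weyl elements $w(2^{-n})$) to "similar calculations," so the level of completeness matches the published argument.
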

\begin{proof}
The proof is similar to the proof of \cite[Lemma 3.5]{B-PII}
and mainly uses \cite[Lemma 3.1, Lemma 3.2 and Lemma A.3]{B-PII}.

We illustrate the case of support of $H(\ov{K}_0, \chi_2)$ on $\ov{y}(2)$
The rest involve similar calculations.

We note that 
\[K_{\ov{y}(2)} = \left\{ (\mat{a-2b}{b}{c+2(a-d)-4b}{2b+d}, \pm1) : 
\mat{a}{b}{c}{d} \in K_0,\ \ord_2(b) \ge 1\right\}.\]
has a triangular decomposition 
$K_{\tilde{A}} = N^{K_{\tilde{A}}} T^{K_{\tilde{A}}} \bar{N}^{K_{\tilde{A}}}$
where
\[N^{K_{\ov{y}(2)}} = \{(x(s),\pm1): \ord_2(b) \ge 1\}, \quad 
T^{K_{\ov{y}(2)}} = T^{\ov{K}_0}, \quad 
\bar{N}^{K_{\ov{y}(2)}} = \bar{N}^{\ov{K}_0}.\]
For $B= x(s)$ where $\ord_2(s) \ge 1$ (assume $s \ne 0$) we have 
$B^{-1}A^{-1}BA = \mat{1+2s+4s^2}{2s^2}{-4s}{-2s+1}$
and $s_2(B^{-1}A^{-1}BA)=\hst{-s,2s-1}$ when $\ord_2(s)$ is odd, $1$ else. 
Thus for $\ord_2(s)\ge 2$ we have $s_2(B^{-1}A^{-1}BA)=1$. If $s =2u$ with $u$ a unit, 
\[\hst{-s, -2s+1} = \hst{-2u, -4u+1} = \hst{-2, -4u+1}\hst{-u, -4u+1}\]
\[=\hst{-2, -3} = -1.\]
As before, since $\ord_2(s)\ge 1$ the $\delta$-factor in the 
triangular decomposition of $[(B,\ \epsilon)^{-1},\ \ov{y}(2)]$ is $1$ and so 
\[\chi_2([(B,\ \epsilon)^{-1},\ \ov{y}(2)]) = \begin{cases}
                                               \chi_2((h(5),-1)) = -1\times-1 =1
                                               & \text{if $\ord_2(s) =1$}\\
                                               1 & \text{if $\ord_2(s) \ge 2$}.
                                              \end{cases}
\] 
For $B= h(u)$ and $B=y(t)$ in $K_{\ov{y}(2)}$ we check that 
$[(B,\ \epsilon)^{-1},\ \ov{y}(4)] \in K_1^2(8) \times \{1\}$
and so $\chi_2$ takes value $1$.
\end{proof}

We note down the following decomposition of double cosets into union of single cosets 
that would be useful in obtaining generators and relations amongst the Hecke algebra 
elements.
\begin{lem}\label{lem:decomp1}
\begin{small}
\begin{enumerate}
\item[(a)] For $n \ge 0$,
\[K_0h(2^n)K_0 = \bigcup_{s\in \Z_2/2^{2n}\Z_2} x(s)h(2^n)K_0 = \bigcup_{s\in \Z_2/2^{2n}\Z_2} K_0h(2^n)y(8s).\]
\item[(b)] For $n \ge 1$,
\[K_0h(2^{-n})K_0 = \bigcup_{s\in \Z_2/2^{2n}\Z_2} y(8s)h(2^{-n})K_0 = \bigcup_{s\in \Z_2/2^{2n}\Z_2} K_0h(2^{-n})x(s).\]
\item[(c)] For $n \ge 2$,
\[K_0w(2^{-n})K_0 = \bigcup_{s\in \Z_2/2^{2n-3}\Z_2} y(8s)w(2^{-n})K_0 = \bigcup_{s\in \Z_2/2^{2n-3}\Z_2} K_0w(2^{-n})y(8s).\] 
\item[(d)] 
 \[K_0w(2^{-1})K_0 = \bigcup_{s\in \Z_2/2\Z_2} x(s)w(2^{-1})K_0 = \bigcup_{s\in \Z_2/2\Z_2} K_0w(2^{-1})x(s).\]
 \item[(e)] For $n \ge 0$,
\[K_0w(2^{n})K_0 = \bigcup_{s\in \Z_2/2^{2n+3}\Z_2} x(s)w(2^{n})K_0 = \bigcup_{s\in \Z_2/2^{2n+3}\Z_2} K_0w(2^{n})x(s).\]
\item[(f)] $K_0y(4)K_0 = K_0y(4) = y(4)K_0$. 
\item[(g)] For $n \ge 1$,
\[K_0h(2^n)y(4)K_0 = \bigcup_{s\in \Z_2/2^{2n}\Z_2} x(s)h(2^n)y(4)K_0 = \bigcup_{s\in \Z_2/2^{2n}\Z_2} K_0h(2^n)y(4+8s).\]
\item[(h)] For $n \ge 1$,
 \[K_0y(4)h(2^{-n})K_0 = \bigcup_{s\in \Z_2/2^{2n}\Z_2} y(4+8s)h(2^{-n})K_0 = \bigcup_{s\in \Z_2/2^{2n}\Z_2} K_0y(4)h(2^{-n})x(s).\]
\item[(i)] For $n \ge 2$,
\[K_0w(2^{-n})y(4)K_0 = \bigcup_{s\in \Z_2/2^{2n-3}\Z_2} y(8s)w(2^{-n})y(4)K_0 = \bigcup_{s\in \Z_2/2^{2n-3}\Z_2} K_0w(2^{-n})y(4+8s).\] 
\item[(j)] For $n \ge 2$,
\[K_0y(4)w(2^{-n})K_0 = \bigcup_{s\in \Z_2/2^{2n-3}\Z_2} y(4+8s)w(2^{-n})K_0 = \bigcup_{s\in \Z_2/2^{2n-3}\Z_2} K_0y(4)w(2^{-n})y(8s).\]
\item[(k)] For $n \ge 2$,
\[K_0y(4)w(2^{-n})y(4)K_0 = 
\bigcup_{s\in \Z_2/2^{2n-3}\Z_2} y(4+8s)w(2^{-n})y(4)K_0 = 
\bigcup_{s\in \Z_2/2^{2n-3}\Z_2} K_0y(4)w(2^{-n})y(4+8s).\]
\item[(l)]\[K_0 y(2) K_0 = \bigcup_{s \in \Z_2/2\Z_2} x(s)y(2)K_0 = \bigcup_{s \in \Z_2/2\Z_2} K_0y(2)x(s).\]
\item[(m)] \[K_0 y(2)w(2^{-1}) K_0 = \bigcup_{s \in \Z_2/2\Z_2} x(s)y(2)w(2^{-1})K_0 = \bigcup_{s \in \Z_2/2\Z_2} K_0y(2)w(2^{-1})x(s).\]
\item[(n)] \[K_0 w(2^{-1})y(2) K_0 = \bigcup_{s \in \Z_2/2\Z_2} x(s)w(2^{-1})y(2)K_0 = \bigcup_{s \in \Z_2/2\Z_2} K_0w(2^{-1})y(2)x(s).\]
\item[(o)] \[K_0 y(2)w(2^{-1})y(2) K_0 = K_0 y(2)w(2^{-1})y(2) = y(2)w(2^{-1})y(2) K_0.\]
\item[(p)] For $n\ge 2$, 
\[K_0 y(2)w(2^{-n}) K_0 = \bigcup_{s \in \Z_2/2^{2n-2}\Z_2} y(2+4s)w(2^{-n})K_0=
\bigcup_{s \in \Z_2/2^{2n-2}\Z_2} K_0y(2)w(2^{-n})y(8s).\]
\item[(q)] For $n\ge 2$, 
\[K_0 w(2^{-n})y(2) K_0 = \bigcup_{s \in \Z_2/2^{2n-2}\Z_2} y(8s)w(2^{-n})y(2)K_0=
\bigcup_{s \in \Z_2/2^{2n-2}\Z_2}K_0w(2^{-n})y(2+4s).\]
\item[(r)] For $n \ge 1$,
\begin{equation*}
\begin{split}
K_0h(2^n)y(2)K_0 &= \bigcup_{s\in \Z_2/2^{2n+1}\Z_2} x(s)h(2^n)y(2)K_0 \\
&=
\bigcup_{s \in \Z_2/2^{2n}\Z_2} K_0h(2^n)y(2+4s) 
\bigcup_{s \in \Z_2/2^{2n}\Z_2} K_0y(4)h(2^n)y(2+4s).
\end{split}
\end{equation*}
\item[(s)] For $n \ge 1$,
\begin{equation*}
\begin{split}
K_0y(2)h(2^{-n})K_0 &= \bigcup_{s\in \Z_2/2^{2n+1}\Z_2} K_0y(2)h(2^{-n})x(s) \\
&=
\bigcup_{s \in \Z_2/2^{2n}\Z_2}y(2+4s)h(2^{-n})K_0 
 \bigcup_{s \in \Z_2/2^{2n}\Z_2}y(2+4s)h(2^{-n})y(4)K_0.
\end{split}
\end{equation*}
\item[(t)] For $n \ge 2$,
\begin{equation*}
\begin{split}
& K_0y(2)w(2^{-n})y(2)K_0 \\
&= \bigcup_{s\in \Z_2/2^{2n-2}\Z_2} y(2+4s)w(2^{-n})y(2)K_0 \bigcup_{s\in \Z_2/2^{2n-2}\Z_2} y(2+4s)w(2^{-n})y(6)K_0\\
&= \bigcup_{s\in \Z_2/2^{2n-2}\Z_2} K_0y(2)w(2^{-n})y(2+4s) \bigcup_{s\in \Z_2/2^{2n-2}\Z_2} K_0y(6)w(2^{-n})y(2+4s). 
\end{split}
\end{equation*}
\end{enumerate}
\end{small}
\end{lem}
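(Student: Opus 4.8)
The plan is to treat each double coset $K_0 g K_0$ through the standard principle that it is a disjoint union of left cosets $\bigcup_i x_i g K_0$, whose number equals the index $[K_0 : K_0 \cap g K_0 g^{-1}]$, and dually a disjoint union of right cosets. The engine for producing representatives of a uniform unipotent shape is the triangular factorization $K_0 = \bar N^{K_0} T^{K_0} N^{K_0}$ recorded in \cite{B-PII}: if conjugation by $g$ carries the upper unipotent part $N^{K_0}$ into $K_0$, then $N^{K_0}$ is absorbed into $g K_0$ and every left coset acquires a representative $y(c)g$ with $y(c)$ lower unipotent; dually, when $g$ contracts $\bar N^{K_0}$ one gets representatives $x(s)g$. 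Concretely one needs only the elementary identities $h(2^m)^{-1} x(s) h(2^m) = x(2^{2m}s)$, $h(2^m)^{-1} y(s) h(2^m) = y(2^{-2m}s)$ and their $w$-analogues $w(2^{-n})^{-1} x(s) w(2^{-n}) = y(-2^{2n}s)$, $w(2^{-n})^{-1} y(u) w(2^{-n}) = x(-2^{-2n}u)$, to read off simultaneously the absorbing direction and the modulus.

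First I would dispose of the pure cases (a)--(e). For (a), conjugating $K_0$ by $h(2^n)^{-1}$ forces the upper-right entry into $2^{2n}\Z_2$, giving index $2^{2n}$; the lower unipotent and torus parts of $K_0$ absorb into $h(2^n)K_0$, leaving the $2^{2n}$ distinct representatives $x(s)h(2^n)$, $s \in \Z_2/2^{2n}\Z_2$. For the $w$-cases the extra factors of $8$ in the lower unipotent part of $K_0$ and the $w$-scaling combine to shift the exponent, producing the moduli $2^{2n-3}$, $2$, $2^{2n+3}$ in (c), (d), (e). Crucially, the right-coset form of each of these is obtained \emph{for free} from the anti-automorphism $g \mapsto g^{-1}$, which preserves $K_0$: since $h(2^n)^{-1}=h(2^{-n})$ this converts the left decomposition of (a) into the right decomposition of (b) and vice versa, while $w(t)^{-1} = w(-t) \in w(t)K_0$ (as $w(-t)=w(t)h(-1)$ and $h(-1)=-I\in K_0$) makes each $w$-double coset self-inverse, so only one side of (c), (d), (e) must be checked.

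Next I would isolate the single structural fact that organizes the product cases: $y(4)$ \emph{normalizes} $K_0$. A direct conjugation shows the lower-left entry of $y(4)\,k\,y(4)^{-1}$ equals $4(a-d)+c-16b$, which lies in $8\Z_2$ because every $k = \smallmat{a}{b}{c}{d}\in K_0$ has $ad \equiv 1 \pmod 8$, forcing $a,d$ odd and $a-d$ even. This gives (f) at once and collapses (g)--(k): each such element is a pure factor times $y(4)$, and using $y(4)K_0 = K_0 y(4)$ one slides the $y(4)$ across $K_0$ and reduces to an already-solved case with the modulus inherited unchanged. The remaining simple $y(2)$-cases (l), (m), (n) split into exactly two single cosets indexed by $\Z_2/2\Z_2$, (o) is again a single coset, and (p), (q) run parallel to (c)/(e) with $y(2)$ mixed in, giving modulus $2^{2n-2}$ and representatives $y(2+4s)$; here one uses that $y(2)$ does \emph{not} normalize $K_0$, its conjugation landing the lower-left entry only in $4\Z_2$.

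The genuinely delicate cases, and the main obstacle I anticipate, are (r), (s) and (t). There the left-coset count ($2^{2n+1}$ representatives $x(s)h(2^n)y(2)$) is matched on the right not by a single unipotent family but by two families, one of the shape $K_0 h(2^n)y(2+4s)$ and one carrying an extra $y(4)$ (resp.\ $y(6)$) prefix, reflecting an index-$2$ ``twist'' in $K_0 \cap gK_0 g^{-1}$. To establish these I would compute the stabilizer explicitly, confirm the total count $2^{2n+1}=2^{2n}+2^{2n}$, and then show the two proposed families are mutually disjoint and exhaust the double coset by verifying, coset by coset, that each listed representative lies in $K_0 g K_0$ and that no two coincide modulo $K_0$; the $y(4)$-normalization from the previous step is exactly what lets one rewrite the awkward mixed representatives into the stated normal form. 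The real work here is the bookkeeping of these coincidences rather than any conceptual difficulty, which is why the author can reasonably describe the whole computation as routine.
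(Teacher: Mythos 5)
Your strategy is correct and would establish all twenty parts: the structural facts you lean on all check out ($y(4)$, and likewise $x(1/2)=y(2)w(2^{-1})y(2)$, normalize $K_0^2(8)$; the Iwahori factorization of $K_0$ controls which unipotent direction is absorbed; inversion converts left decompositions into right ones, with $w(t)^{-1}=-w(t)$ making each $w$-double coset self-inverse), and the indices you extract ($2^{2n}$, $2^{2n-3}$, $2^{2n+3}$, $2^{2n-2}$, $2^{2n+1}$, \dots) are all the right ones. One cosmetic slip: your displayed conjugation identities are transposed --- one has $h(2^m)^{-1}x(s)h(2^m)=x(2^{-2m}s)$ and $h(2^m)^{-1}y(s)h(2^m)=y(2^{2m}s)$, not the reverse --- but since every modulus you subsequently read off is correct, nothing downstream is affected. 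Where your route genuinely differs from the authors' (suppressed) argument is in the delicate cases (p)--(t). You compute $[K_0:K_0\cap gK_0g^{-1}]$ from first principles and must then argue separately that the right-hand sides of (r), (s), (t) split into two families. The authors instead descend from the known coset decompositions at level $K_0^2(4)$: since $K_0^2(4)=K_0\sqcup y(4)K_0$ and $y(2)$ normalizes $K_0^2(4)$, each level-$4$ single coset meets at most two level-$8$ cosets, which hands them the candidate lists --- and in particular explains at a glance the doubling in (r)--(t) and the provenance of the $y(4)$- resp.\ $y(6)$-twisted families --- leaving only the task of confirming or excluding each candidate by exhibiting an explicit element of $K_0$ (typically an upper-triangular $\smallmat{1}{b}{0}{1}$ with $b$ solved for in $\Z_2$), exactly as in their treatment of $K_0y(2)w(2^{-n})y(2)K_0$. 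Your approach is more self-contained; theirs concentrates the effort into a few one-line matrix verifications. Both reduce in the end to the same bookkeeping you describe, so the proposal stands.
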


Using Lemma~\ref{lem:sup1}, \cite[Lemma 3.4]{B-PII} and 
the above decomposition we have the following corollary.
\begin{cor}\label{cor:sup2}
$H(\ov{K}_0, \chi_1)$ is
supported on $\ov{y}(4)\ov{h}(2^{-n})$, 
$\ov{h}(2^n)\ov{y}(4)$ for $n\ge 1$ and 
$\ov{y}(4)\ov{w}(2^{-n})$, $\ov{w}(2^{-n})\ov{y}(4)$, 
$\ov{y}(4)\ov{w}(2^{-n})\ov{y}(4)$ for $n \ge 2$.

$H(\ov{K}_0, \chi_2)$ is
supported on $\ov{y}(2)\ov{w}(2^{-n})$, $\ov{w}(2^{-n})\ov{y}(2)$ 
for $n \ge 2$ and $\ov{h}(2^{n})\ov{y}(2)$, $\ov{y}(2)\ov{h}(2^{-n})$ 
for $n \ge 1$.
\end{cor}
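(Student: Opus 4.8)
The plan is to produce, for each double coset in the list, an explicit nonzero element of the Hecke algebra supported there by convolving the nonzero elements already known to live on the basic double cosets of Lemma~\ref{lem:sup1}. Write $d(g)$ for the number of left $\ov{K}_0$-cosets in $\ov{K}_0\ov{g}\ov{K}_0$. The governing principle, supplied by \cite[Lemma 3.4]{B-PII}, is that whenever $H(\ov{K}_0,\gamma)$ is supported on $\ov{g}_1$ and $\ov{g}_2$ and the multiplicativity $d(g_1)d(g_2)=d(g_1g_2)$ holds, the product $\ov{K}_0\ov{g}_1\ov{K}_0\ov{g}_2\ov{K}_0$ collapses to the single double coset $\ov{K}_0\ov{g}_1\ov{g}_2\ov{K}_0$ and the convolution of the two supporting elements is a nonzero multiple of a Hecke element supported on $\ov{g}_1\ov{g}_2$; hence $H(\ov{K}_0,\gamma)$ is supported on $\ov{g}_1\ov{g}_2$.

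The reason the multiplicativity hypothesis is exactly the right one is a counting argument. The set $\ov{K}_0\ov{g}_1\ov{K}_0\ov{g}_2\ov{K}_0$ is a union of at most $d(g_1)d(g_2)$ left cosets, it contains $\ov{K}_0\ov{g}_1\ov{g}_2\ov{K}_0$, and the latter already accounts for $d(g_1g_2)$ of them; when the two numbers agree, the product is forced to be a single double coset and, more to the point, each of its left cosets is hit by a unique pair of left cosets from the two factors. This unique factorization is what prevents cancellation in the convolution sum, so its value at $\ov{g}_1\ov{g}_2$ is a single product of nonzero character values and the element is nonzero. I emphasize that this also makes the construction insensitive to the metaplectic cocycle: the value is a single $\gamma$-twisted term, never a sum in which signs could cancel, so nonvanishing is automatic.

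It then remains to read the needed equalities $d(g_1)d(g_2)=d(g_1g_2)$ off the single-coset decompositions of Lemma~\ref{lem:decomp1}. For $H(\ov{K}_0,\chi_1)$ I take the supporting elements on $\ov{y}(4)$, $\ov{h}(2^{\pm n})$, $\ov{w}(2^{-n})$ and factor each target: $\ov{y}(4)\ov{h}(2^{-n})$ and $\ov{h}(2^n)\ov{y}(4)$ use $d(y(4))=1$ from part (f) against (b) and (a), matched by (h) and (g); $\ov{y}(4)\ov{w}(2^{-n})$ and $\ov{w}(2^{-n})\ov{y}(4)$ use (c) matched by (j) and (i); and $\ov{y}(4)\ov{w}(2^{-n})\ov{y}(4)$ follows by applying the principle twice, the final degree $2^{2n-3}$ being read from (k). For $H(\ov{K}_0,\chi_2)$ I use the supporting element on $\ov{y}(2)$, with $d(y(2))=2$ from part (l); then $\ov{y}(2)\ov{w}(2^{-n})$ and $\ov{w}(2^{-n})\ov{y}(2)$ match via (p) and (q), both of degree $2^{2n-2}=2\cdot 2^{2n-3}$, while $\ov{h}(2^n)\ov{y}(2)$ and $\ov{y}(2)\ov{h}(2^{-n})$ match via (r) and (s), both of degree $2^{2n+1}=2^{2n}\cdot 2$.

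The step I expect to require the most care is confirming that the product degree genuinely multiplies, i.e. that the product double coset does not collapse to something smaller, precisely in the cases where its coset structure is non-obvious: the triple product (k) for $\chi_1$, and the pair $\ov{h}(2^n)\ov{y}(2)$, $\ov{y}(2)\ov{h}(2^{-n})$ for $\chi_2$, where in (r) and (s) the single double coset is written as a union of cosets of two distinct shapes (those carrying a $y(4)$ factor and those without). This is exactly the content of Lemma~\ref{lem:decomp1}: once the total counts $2^{2n-3}$ and $2^{2n+1}$ are confirmed there, the multiplicativity hypothesis of \cite[Lemma 3.4]{B-PII} is met and the convolution argument above applies verbatim. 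Since the remaining cases involve factors of degree one (from (f) and (l)) and so collapse to a single contributing term immediately, no further obstacle arises.
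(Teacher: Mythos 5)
Your proposal is correct and follows essentially the same route as the paper: the paper also takes elements supported on the basic double cosets from Lemma~\ref{lem:sup1}, checks via Lemma~\ref{lem:decomp1} that the coset counts multiply (e.g. $\mu(\ov{y}(4))\mu(\ov{h}(2^{-n}))=\mu(\ov{y}(4)\ov{h}(2^{-n}))$), and invokes \cite[Lemma 3.4]{B-PII} to conclude the convolution is supported precisely on the product double coset. Your degree bookkeeping against parts (f)--(s) is accurate, and your remark that the argument breaks down exactly where multiplicativity fails matches the paper's own caveat about $\ov{w}(2^{-1})\ov{y}(2)$, $\ov{y}(2)\ov{w}(2^{-1})$ and $\ov{y}(2)\ov{w}(2^{-n})\ov{y}(2)$.
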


Note that we can not use the argument in the proof of the above corollary to 
show support of $H(\ov{K}_0, \chi_2)$ on the double cosets of
$\ov{w}(2^{-1})\ov{y}(2)$, $\ov{y}(2)\ov{w}(2^{-1})$ 
and $\ov{y}(2)\ov{w}(2^{-n})\ov{y}(2)$ for $n\ge 1$. 
In these cases we check the support directly as in Lemma~\ref{lem:sup1}. 

Thus we have the following proposition.
\begin{prop}\label{prop:sup1}
$H(\ov{K_0^2(8)}, \chi_1)$ is supported on precisely the double cosets of 
$\ov{K_0^2(8)}$ represented by 
\[\{\ov{h}(2^n),\ \ov{w}(2^{-n})\}_{n\in \Z} \cup \ov{y}(4) \cup \{\ov{h}(2^n)\ov{y}(4),\ \ov{y}(4)\ov{h}(2^{-n})\}_{n \ge 1} \cup \]
\[\{\ov{y}(4)\ov{w}(2^{-n}),\ \ov{w}(2^{-n})\ov{y}(4),\ \ov{y}(4)\ov{w}(2^{-n})\ov{y}(4)\}_{n\ge 2}.\]

$H(\ov{K_0^2(8)}, \chi_2)$ is supported on precisely the double cosets of $\ov{K_0^2(8)}$ 
represented by 
\[\{\ov{h}(2^n),\ \ov{w}(2^{-n})\}_{n\in \Z}\ \cup\ \ov{y}(2)\ \cup \]
\[\{\ov{y}(2)\ov{w}(2^{-n}),\ \ov{w}(2^{-n})\ov{y}(2),\ \ov{y}(2)\ov{w}(2^{-n})\ov{y}(2),\ \ov{h}(2^n)\ov{y}(2),\ \ov{y}(2)\ov{h}(2^{-n})\}_{n \ge 1}.\]
\end{prop}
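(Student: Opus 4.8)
The plan is to read the support straight off the exhaustive list of double-coset representatives in Proposition~\ref{prop:1}, pruning that list with the vanishing lemma for the upper bound and then certifying non-vanishing on the survivors. For the upper bound, take each representative furnished by Proposition~\ref{prop:1} and discard it whenever the preceding vanishing lemma shows the relevant algebra to be zero on its double coset. For $\chi_1$ this discards every representative built from $\ov{y}(2)$, together with $\ov{y}(2)\ov{w}(2^{-1})\ov{y}(6)$; for $\chi_2$ it discards every representative built from $\ov{y}(4)$, together with $\ov{y}(2)\ov{w}(2^{-1})\ov{y}(6)$. Since by definition an element of $H(\ov{K}_0,\gamma)$ is supported on finitely many double cosets, the support is contained in the list of Proposition~\ref{prop:1} minus the vanished cosets. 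After the harmless identifications $\ov{h}(2^0)\ov{y}(4)=\ov{y}(4)$ and $\ov{h}(2^0)\ov{y}(2)=\ov{y}(2)$, which split off the $n=0$ terms of the families $\{\ov{h}(2^n)\ov{y}(4)\}_{n\ge0}$ and $\{\ov{h}(2^n)\ov{y}(2)\}_{n\ge0}$, what remains is exactly the list named in the statement; this gives the inclusion ``support $\subseteq$ stated set'' for both characters.

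For the reverse inclusion I would assemble the non-vanishing results already proved. Lemma~\ref{lem:sup1} places both algebras on $\ov{h}(2^n)$ and $\ov{w}(2^{-n})$ for $n\in\Z$, and in addition places $H(\ov{K}_0,\chi_1)$ on $\ov{y}(4)$ and $H(\ov{K}_0,\chi_2)$ on $\ov{y}(2)$. Corollary~\ref{cor:sup2} then supplies, for $\chi_1$, the mixed cosets $\ov{h}(2^n)\ov{y}(4),\ \ov{y}(4)\ov{h}(2^{-n})$ for $n\ge1$ and $\ov{y}(4)\ov{w}(2^{-n}),\ \ov{w}(2^{-n})\ov{y}(4),\ \ov{y}(4)\ov{w}(2^{-n})\ov{y}(4)$ for $n\ge2$; and, for $\chi_2$, the cosets $\ov{h}(2^n)\ov{y}(2),\ \ov{y}(2)\ov{h}(2^{-n})$ for $n\ge1$ and $\ov{y}(2)\ov{w}(2^{-n}),\ \ov{w}(2^{-n})\ov{y}(2)$ for $n\ge2$. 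Combined with the upper bound, this already pins down the support of $H(\ov{K}_0,\chi_1)$ completely, so only $\chi_2$ needs further attention.

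The three residual $\chi_2$-families $\ov{w}(2^{-1})\ov{y}(2)$, $\ov{y}(2)\ov{w}(2^{-1})$ and $\ov{y}(2)\ov{w}(2^{-n})\ov{y}(2)$ for $n\ge1$ are the crux, and I expect the last of these to be the main obstacle. The convolution argument behind Corollary~\ref{cor:sup2} works only when the double-coset degrees multiply, i.e. $\mu(\ov{g}_1\ov{g}_2)=\mu(\ov{g}_1)\mu(\ov{g}_2)$, a fact one reads from Lemma~\ref{lem:decomp1}; but it fails here (for instance $\mu(\ov{y}(2))=\mu(\ov{w}(2^{-1}))=\mu(\ov{y}(2)\ov{w}(2^{-1}))=2$, not $4$), because the power of $w$ is too small or the element is flanked by two copies of $\ov{y}(2)$. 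For these I would verify non-vanishing directly via the criterion of \cite[Lemma 3.1]{B-PII}, exactly as in Lemma~\ref{lem:sup1}: for each such $\tilde{g}$ compute $K_{\tilde{g}}=\ov{K}_0\cap\tilde{g}\ov{K}_0\tilde{g}^{-1}$, produce a triangular decomposition $N^{K_{\tilde{g}}}T^{K_{\tilde{g}}}\bar{N}^{K_{\tilde{g}}}$, and check on each one-parameter generator $\tilde{k}$ that the commutator $[\tilde{k}^{-1},\tilde{g}^{-1}]$ either lies in $K_1^2(8)\times\{1\}$ or equals $(h(5),-1)$, on both of which $\chi_2$ takes the value $1$; this reduces to the same $2$-adic Hilbert-symbol and $s_2$-cocycle evaluations used throughout this section. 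The delicate point is the flanked family $\ov{y}(2)\ov{w}(2^{-n})\ov{y}(2)$, where the very existence of a triangular decomposition of $K_{\tilde{g}}$ is not immediate: here I would factor $\tilde{g}=\ov{y}(2)\cdot\bigl(\ov{w}(2^{-n})\ov{y}(2)\bigr)$, use that $y(2)$ normalizes $K_0^2(4)$ to intersect inside $\ov{K_0^2(4)}$, and only then conjugate back by $\ov{y}(2)$; the case $n=1$ is genuinely different and must be isolated, since $\ov{y}(2)\ov{w}(2^{-1})\ov{y}(2)=(x(1/2),\pm1)$ normalizes $\ov{K}_0$, forcing $K_{\tilde{g}}=\ov{K}_0$.
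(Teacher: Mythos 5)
Your proposal is correct and follows essentially the same route as the paper: the upper bound comes from pruning the complete list of double cosets in Proposition~\ref{prop:1} by the vanishing lemma, the bulk of the lower bound from Lemma~\ref{lem:sup1} together with the convolution/degree argument of Corollary~\ref{cor:sup2}, and the three residual $\chi_2$-families ($\ov{w}(2^{-1})\ov{y}(2)$, $\ov{y}(2)\ov{w}(2^{-1})$, $\ov{y}(2)\ov{w}(2^{-n})\ov{y}(2)$) handled by the direct commutator criterion, exactly as the paper indicates in the remark following Corollary~\ref{cor:sup2}. Your diagnosis of why the degree argument fails there (e.g.\ $\mu(\ov{y}(2)\ov{w}(2^{-1}))=2\neq 4$) and your treatment of the flanked family, isolating $n=1$ where $\ov{y}(2)\ov{w}(2^{-1})\ov{y}(2)=(x(1/2),\pm1)$ normalizes $\ov{K}_0$ and factoring through $\ov{K_0^2(4)}$ for $n\ge 2$, match the paper's (omitted) computations.
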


\subsection{Generators and Relations}

Let $\gamma$ be either $\chi_1$ or $\chi_2$.
Following Loke and Savin~\cite{L-S} we extend the character 
$\gamma$ on $M_2$ to the normalizer subgroup 
$N_{\DSL_2(\Q_2)}(T)$ of torus $T$ in $\DSL_2(\Q_2)$ 
by defining $\gamma(\ov{h}(2^n))=1$ for all $n\in \Z$ and 
$\gamma(\ov{w}(1))= \frac{1+ \gamma((-I,1))}{\sqrt{2}}=:\zeta_8$, 
a $8$-th root of unity.  

For $n\in \Z$, define the elements $\mathcal{T}_n$ and
$\mathcal{U}_n$ of $H(\ov{K_0^2(8)}, \gamma)$ supported respectively on 
the $\ov{K_0^2(8)}$ double cosets of $(h(2^n),1)$ and 
$(w(2^{-n}),1)$ such that
\begin{equation}\label{eq:defn}
\mathcal{T}_n(\tilde{k}(h(2^n),1)\tilde{k'}) = 
\overline{\gamma}(\tilde{k})\overline{\gamma}((h(2^n),1))
\overline{\gamma}(\tilde{k'}),
\end{equation}
\[\mathcal{U}_n(\tilde{k}(w(2^{-n}),1)\tilde{k'}) = 
\overline{\gamma}(\tilde{k})\overline{\gamma}((w(2^{-n}),1))\overline{\gamma}(\tilde{k'})
\quad \text{for $\tilde{k}$, $\tilde{k'} \in \ov{K_0^2(8)}$}.\]
We use the decomposition Lemma~\ref{lem:decomp1} and \cite[Lemma 3.4]{B-PII} to 
obtain the following relations in $H(\ov{K_0^2(8)}, \gamma)$.
\begin{lem}\label{lem:rel3}
\begin{enumerate} 
 \item If $mn \ge 0$ then $\mathcal{T}_m*\mathcal{T}_n = \mathcal{T}_{m+n}$.
 \item For $n \le 0$, $\mathcal{U}_1 * \mathcal{T}_n = \mathcal{U}_{1+n}$ and 
 for $n \ge 0$, $\mathcal{T}_n * \mathcal{U}_1 = \mathcal{U}_{1-n}$.
 \item For $n \ge 0$, $\mathcal{U}_2 * \mathcal{T}_n = \mathcal{U}_{2+n}$ and 
 for $n \le 0$, $\mathcal{T}_n * \mathcal{U}_2 = \mathcal{U}_{2-n}$.
 \item For $m \ge 2$, $\mathcal{U}_1 * \mathcal{U}_m = \mathcal{T}_{m-1}$ and
 $\mathcal{U}_m * \mathcal{U}_1 = \mathcal{T}_{1-m}$.
 \item For $m \le 1$, $\mathcal{U}_2 * \mathcal{U}_m = \mathcal{T}_{m-2}$ and
 $\mathcal{U}_m * \mathcal{U}_2 = \mathcal{T}_{2-m}$.
\end{enumerate}
\end{lem}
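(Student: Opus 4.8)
The plan is to prove all five relations by a single mechanism, reducing the convolution of two basis elements to one basis element via the multiplicativity criterion of \cite[Lemma 3.4]{B-PII}. That criterion says: if $f_1, f_2 \in H(\ov{K}_0, \gamma)$ are supported on single double cosets $\ov{K}_0 \tilde{g}_1 \ov{K}_0$ and $\ov{K}_0 \tilde{g}_2 \ov{K}_0$, and the numbers $\mu$ of left $\ov{K}_0$-cosets satisfy $\mu(\tilde{g}_1)\mu(\tilde{g}_2) = \mu(\tilde{g}_1\tilde{g}_2)$, then $f_1 * f_2$ is supported on the single double coset $\ov{K}_0 \tilde{g}_1\tilde{g}_2 \ov{K}_0$ and its value at $\tilde{g}_1\tilde{g}_2$ is $\overline{\gamma}(\tilde{g}_1)\overline{\gamma}(\tilde{g}_2)$, where $\gamma$ is the extension to $N_{\DSL_2(\Q_2)}(T)$. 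For each relation I would then carry out three steps: (a) a matrix identity naming the product representative; (b) a counting check that the coset numbers multiply, read off from Lemma~\ref{lem:decomp1}; and (c) a scalar check that the resulting leading coefficient is exactly $1$, so that $f_1 * f_2$ is the asserted basis element and not merely a scalar multiple of it.

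For step (a) the only identities needed are $h(2^m)h(2^n) = h(2^{m+n})$, the factorization $w(t) = h(t)w(1)$, and $w(t)w(s) = h(-t/s) = (-I)\,h(t/s)$. These give immediately $w(2^{-1})h(2^n) = w(2^{-1-n})$ and $h(2^n)w(2^{-1}) = w(2^{n-1})$, together with the analogues for $w(2^{-2})$, which settle the support assertions in (2) and (3). Likewise $w(2^{-1})w(2^{-m}) = (-I)\,h(2^{m-1})$ and $w(2^{-2})w(2^{-m}) = (-I)\,h(2^{m-2})$ show that in (4) and (5) the product lands on the double cosets of $\ov{h}(2^{m-1})$ and $\ov{h}(2^{m-2})$ respectively.

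For step (b) I would read the coset counts directly from Lemma~\ref{lem:decomp1}, namely $\mu(\ov{h}(2^{\pm n})) = 2^{2n}$, $\mu(\ov{w}(2^{-n})) = 2^{2n-3}$ for $n \ge 2$, $\mu(\ov{w}(2^{-1})) = 2$, and $\mu(\ov{w}(2^{n})) = 2^{2n+3}$ for $n \ge 0$, and then verify multiplicativity case by case. For example, in (1) with $m,n \ge 0$ one has $2^{2m}\,2^{2n} = 2^{2(m+n)}$, while in (4) with $m \ge 2$ one has $\mu(\ov{w}(2^{-1}))\,\mu(\ov{w}(2^{-m})) = 2\cdot 2^{2m-3} = 2^{2m-2} = \mu(\ov{h}(2^{m-1}))$. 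The sign hypotheses ($mn \ge 0$ in (1); $n\le 0$ resp. $n\ge 0$ in (2), (3); $m \ge 2$ in (4); $m \le 1$ in (5)) are precisely the ranges in which these products of coset counts coincide; in the complementary ranges the product double coset is strictly smaller, the counts fail, and the clean relation does not hold, so these hypotheses are sharp.

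Step (c) is where I expect the real work to be, and it is routine for (1)--(3) but delicate for (4)--(5). Since $\gamma$ is extended to a character of $N_{\DSL_2(\Q_2)}(T)$ with $\gamma(\ov{h}(2^n)) = 1$, and in (1)--(3) the products stay of the form $\ov{h}$ or $\ov{w}$ with no central element appearing, the product formula $\overline{\gamma}(\tilde g_1)\overline{\gamma}(\tilde g_2) = \overline{\gamma}(\tilde g_1\tilde g_2)$ identifies the leading coefficient as $1$ directly from multiplicativity of $\gamma$. In (4) and (5), however, the product representative is $(-I)\,h(2^{k})$ rather than $h(2^{k})$, so comparing $f_1 * f_2$ with the standard element $\mathcal{T}_{k}$ introduces the central element $(-I,1)$, whose $\gamma$-value is a primitive fourth root of unity, together with the Kubota--Gelbart cocycle values $\sigma_2(w(2^{-1}), w(2^{-m}))$ and $\sigma_2(w(2^{-2}), w(2^{-m}))$. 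The hard part will be verifying that these contributions cancel to give coefficient $1$: this is exactly the purpose of the normalization $\gamma(\ov{w}(1)) = \zeta_8 = \tfrac{1 + \gamma((-I,1))}{\sqrt{2}}$, for which $\zeta_8^{2} = \gamma((-I,1))$, so that the square of the $\ov{w}$-value reproduces the central character and absorbs the fourth-root-of-unity factor. I would therefore finish by computing $\sigma_2$ explicitly on these pairs and confirming the cancellation, which completes all five cases.
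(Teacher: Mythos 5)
Your proposal is correct and takes essentially the same route as the paper: the authors likewise obtain all five relations from the coset counts in Lemma~\ref{lem:decomp1} combined with the multiplicativity criterion of \cite[Lemma 3.4]{B-PII}, with the only extra point being exactly the one you isolate in step (c), namely that $w(t)w(s)=-h(t/s)$ forces a comparison of the element attached to $\ov{h}(-2^{k})$ with $\mathcal{T}_{k}$, which the normalization $\gamma(\ov{w}(1))=\zeta_8$ with $\zeta_8^2=\gamma((-I,1))$ resolves. No gaps.
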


\subsection{The algebra $H(\ov{K_0^2(8)}, \chi_1)$}
Consider the case when $\gamma = \chi_1$. Since $H(\ov{K_0}, \chi_1)$ 
is supported on $\ov{K}_0\ov{y}(4)\ov{K}_0$, we define $\mathcal{V}$ to be an element 
of $H(\ov{K_0}, \chi_1)$ that is supported precisely on $\ov{K}_0\ov{y}(4)\ov{K}_0$ 
such that $\mathcal{V}(\ov{y}(4))=1$. 
Now since $\mu(\ov{y}(4))\mu(\ov{y}(4))=\mu(\ov{y}(8))$, 
using Lemma~\cite[Lemma 3.4]{B-PII} we get that 
$\mathcal{V}* \mathcal{V}$ is supported precisely on 
$\ov{K}_0\ov{y}(8)\ov{K}_0 = \ov{K}_0$ and 
$\mathcal{V} * \mathcal{V} ((I,1)) = 
\mathcal{V} * \mathcal{V}(\ov{y}(8))= 
\mathcal{V}(\ov{y}(4))\mathcal{V}(\ov{y}(4)) = 1$, so we get that 
$\mathcal{V} * \mathcal{V} =1$.

Similarly, $\mathcal{U}_1 * \mathcal{V}$ 
is supported precisely at $\ov{K}_0\ov{w}(2^{-1})\ov{y}(4)\ov{K}_0$ and 
it's value at $\ov{w}(2^{-1})\ov{y}(4)$ is $\mathcal{U}_1(\ov{w}(2^{-1}))$ 
as $\mathcal{V}(\ov{y}(4)=1$. 
But note that $\ov{K}_0\ov{w}(2^{-1})\ov{y}(4)\ov{K}_0 = 
\ov{K}_0 \ov{w}(2^{-1})\ov{K}_0$, in fact 
\[\ov{w}(2^{-1})\ov{y}(4)= (\mat{9}{-1}{-8}{1},1)\ov{w}(2^{-1})(\mat{1}{-2}{0}{1},1),\] 
so 
\[\mathcal{U}_1(\ov{w}(2^{-1})) = 
\mathcal{U}_1 * \mathcal{V}(\ov{w}(2^{-1})\ov{y}(4))= \]
\[ =\mathcal{U}_1 * \mathcal{V} 
((\mat{9}{-1}{-8}{1},1)\ov{w}(2^{-1})(\mat{1}{-2}{0}{1},1)) = 
\mathcal{U}_1 * \mathcal{V}(\ov{w}(2^{-1})),\]
and thus $\mathcal{U}_1 * \mathcal{V} = \mathcal{U}_1$. 
Similarly we get $\mathcal{V} * \mathcal{U}_1 = \mathcal{U}_1$.
\begin{lem}\label{lem:rel4}
For $\mathcal{V},\ \mathcal{U}_1\in H(\ov{K_0}, \chi_1)$ we have following relations:
\begin{enumerate}
 \item $\mathcal{V} * \mathcal{V} =1$.
 \item $\mathcal{U}_1 * \mathcal{V} = \mathcal{U}_1 = \mathcal{V} * \mathcal{U}_1$.
\end{enumerate}
\end{lem}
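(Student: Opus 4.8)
The plan is to exploit that both $\mathcal{V}$ and $\mathcal{U}_1$ are supported on a single $\ov{K}_0$-double coset, so that each convolution is governed by the degree criterion of \cite[Lemma 3.4]{B-PII}: if $f_i$ is supported precisely on $\ov{K}_0\tilde g_i\ov{K}_0$ and the index identity $\mu(\tilde g_1)\mu(\tilde g_2)=\mu(\tilde g_1\tilde g_2)$ holds, then $f_1*f_2$ is supported precisely on $\ov{K}_0\tilde g_1\tilde g_2\ov{K}_0$ and takes at $\tilde g_1\tilde g_2$ the product of the values of $f_1,f_2$ at the respective representatives. Since any element of $H(\ov{K}_0,\chi_1)$ supported on one double coset is determined by its value at a representative, in each case it suffices to identify the double coset carrying the product and to evaluate there.

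For part (1), I would note that $y(4)y(4)=y(8)$ and $y(8)=\smallmat{1}{0}{8}{1}\in K_0^2(8)=K_0$, so the product lands in the trivial double coset $\ov{K}_0$. By Lemma~\ref{lem:decomp1}(f) we have $\mu(\ov{y}(4))=1$, whence $\mu(\ov{y}(4))^2=1=\mu(\ov{y}(8))$ and the degree criterion applies. Evaluating at the identity gives $(\mathcal{V}*\mathcal{V})((I,1))=\mathcal{V}(\ov{y}(4))^2=1$, so $\mathcal{V}*\mathcal{V}$ equals the identity of the Hecke algebra.

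For part (2), the crux is the double-coset coincidence $\ov{K}_0\ov{w}(2^{-1})\ov{y}(4)\ov{K}_0=\ov{K}_0\ov{w}(2^{-1})\ov{K}_0$, which I would establish through the explicit factorization $\ov{w}(2^{-1})\ov{y}(4)=(\smallmat{9}{-1}{-8}{1},1)\,\ov{w}(2^{-1})\,(\smallmat{1}{-2}{0}{1},1)$, both outer factors lying in $\ov{K}_0$. From Lemma~\ref{lem:decomp1}(d) one has $\mu(\ov{w}(2^{-1}))=2$, so together with $\mu(\ov{y}(4))=1$ the degree criterion shows $\mathcal{U}_1*\mathcal{V}$ is supported precisely on $\ov{K}_0\ov{w}(2^{-1})\ov{K}_0$. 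Evaluating at the product representative gives $\mathcal{U}_1(\ov{w}(2^{-1}))\mathcal{V}(\ov{y}(4))=\mathcal{U}_1(\ov{w}(2^{-1}))$; transporting this value to $\ov{w}(2^{-1})$ via $\chi_1$-equivariance and the factorization (on whose outer factors $\chi_1$ is trivial) shows $\mathcal{U}_1*\mathcal{V}$ agrees with $\mathcal{U}_1$ at $\ov{w}(2^{-1})$, hence $\mathcal{U}_1*\mathcal{V}=\mathcal{U}_1$. The identity $\mathcal{V}*\mathcal{U}_1=\mathcal{U}_1$ follows by the mirror factorization $\ov{y}(4)\ov{w}(2^{-1})=(\smallmat{1}{2}{0}{1},1)\,\ov{w}(2^{-1})\,(\smallmat{1}{-1}{8}{-7},1)$, with both outer factors again in $\ov{K}_0$.

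I expect the main obstacle to be verifying these factorizations inside the metaplectic cover rather than in $\SL_2(\Q_2)$: one must check that the Kubota--Gelbart cocycle $\sigma_2$ contributes trivially, so that the identities genuinely hold with the chosen lifts $(\cdot,1)$, and that $\chi_1$ evaluates to $1$ on the relevant elements of $\ov{K}_0$. Everything else---the coset bookkeeping and the degree count---is routine given Lemma~\ref{lem:decomp1} and \cite[Lemma 3.4]{B-PII}.
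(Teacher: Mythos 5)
Your proposal is correct and follows essentially the same route as the paper: the degree criterion of \cite[Lemma 3.4]{B-PII} applied to the single-coset supports, the evaluation at the product representative, and the identical factorization $\ov{w}(2^{-1})\ov{y}(4)=(\smallmat{9}{-1}{-8}{1},1)\ov{w}(2^{-1})(\smallmat{1}{-2}{0}{1},1)$ to identify the double cosets. The only addition is your explicit mirror factorization for $\mathcal{V}*\mathcal{U}_1$ (which checks out at the matrix level), where the paper simply says ``similarly.''
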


\begin{prop}\label{prop:rel5}
\begin{enumerate}
\item $\mathcal{U}_2 * \mathcal{U}_2 = 2$.
\item $\mathcal{U}_1 * \mathcal{U}_1 = 2 + 2\mathcal{V}$.
\item $\mathcal{U}_2 * \mathcal{V} * \mathcal{U}_2 
= \sqrt{2}\ \mathcal{V} * \mathcal{U}_2 * \mathcal{V}$.
\item $\mathcal{U}_0 * \mathcal{U}_0 = 8 + 2\sqrt{2}\ \mathcal{U}_0 + 8\mathcal{V}$.
\item $\mathcal{U}_0 * \mathcal{V} = \mathcal{U}_0 = \mathcal{V} * \mathcal{U}_0$
and consequently,  
$\frac{\mathcal{U}_0}{\sqrt{2}}
*(\frac{\mathcal{U}_0}{\sqrt{2}}-4)*(\frac{\mathcal{U}_0}{\sqrt{2}}+2)=0$.
\end{enumerate}
\end{prop}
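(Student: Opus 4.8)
The plan is to compute each convolution product directly from the single-coset decompositions of Lemma~\ref{lem:decomp1}, using the degree-multiplicativity criterion of \cite[Lemma 3.4]{B-PII} to locate the support of each product and the defining formula \eqref{eq:defn} together with the normalization $\gamma(\ov w(1)) = \zeta_8$ to evaluate it. The one fact I keep invoking is $\zeta_8 + \ov\zeta_8 = \sqrt 2$, which is the source of every $\sqrt 2$ in the statement. Throughout I write $\mu(\tilde g)$ for the number of single right $\ov K_0$-cosets in $\ov K_0 \tilde g \ov K_0$, read off from Lemma~\ref{lem:decomp1}: $\mu(\ov w(2^{-1})) = 2$, $\mu(\ov w(2^{-2})) = 2$, $\mu(\ov w(1)) = 8$, $\mu(\ov y(4)) = 1$.

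For the scalar identities (1) and (2) I first note $w(t)^2 = -I \in K_0$, so the ``leading'' double coset of $\mathcal U_n * \mathcal U_n$ is the identity. For (1), Proposition~\ref{prop:sup1} leaves the identity as the only admissible double coset inside $\ov K_0 w(2^{-2})\ov K_0 w(2^{-2})\ov K_0$ meeting the support of $\chi_1$, so $\mathcal U_2 * \mathcal U_2$ is a scalar; evaluating at the identity sums the $\mu(\ov w(2^{-2})) = 2$ unit-modulus terms to give $2$. For (2) the triple product additionally meets the support coset $\ov y(4)$, so $\mathcal U_1 * \mathcal U_1 = c_0 + c_1 \mathcal V$; the value at the identity is again $\mu(\ov w(2^{-1})) = 2$, and the value at $\ov y(4)$, computed from Lemma~\ref{lem:decomp1}(d),(f), is $2$, giving $2 + 2\mathcal V$.

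Relations (3) and (4) carry the real content. For (4) the element $\mathcal U_0$ has degree $\mu(\ov w(1)) = 8$ and $w(1)^2 = -I \in K_0$, so by Proposition~\ref{prop:sup1} the product $\mathcal U_0 * \mathcal U_0$ is supported only on the identity, on $\ov w(1)$, and on $\ov y(4)$, hence equals $c_0 + c_1\mathcal U_0 + c_2\mathcal V$; evaluating at the three representatives by summing $\gamma$-values over the $8$ single cosets $x(s)w(1)$, $s \in \Z_2/2^3\Z_2$ (Lemma~\ref{lem:decomp1}(e)), yields $c_0 = 8$, $c_2 = 8$, and the middle coefficient $c_1 = 2\sqrt 2$, the $\sqrt 2$ coming from $\zeta_8 + \ov\zeta_8$. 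For (3), $\mathcal V$ has degree $1$, so $\mathcal U_2 * \mathcal V$ is supported exactly on $\ov w(2^{-2})\ov y(4)$ and $\mathcal V * \mathcal U_2$ on $\ov y(4)\ov w(2^{-2})$ (both in the support by Corollary~\ref{cor:sup2}); since $w(2^{-2})y(4)w(2^{-2})$ and $y(4)w(2^{-2})y(4) = x(1/4)$ lie in the common double coset $\ov y(4)\ov w(2^{-2})\ov y(4)$ — one checks directly that $x(1/4)y(8)x(1/4) \in K_0$, placing $x(\pm 1/4)$ in a single double coset — both triple products are supported on that one coset, and comparing their values produces the factor $\sqrt 2$.

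Finally, relation (5) splits into an absorption identity and a purely formal consequence. The identity $\mathcal U_0 * \mathcal V = \mathcal U_0 = \mathcal V * \mathcal U_0$ follows exactly as Lemma~\ref{lem:rel4}(2): $\ov K_0 w(1)\ov K_0 = \ov K_0 w(1)y(4)\ov K_0$, so convolving with $\mathcal V$ (supported on $\ov y(4)$, $\mathcal V(\ov y(4)) = 1$) reproduces $\mathcal U_0$. For the cubic, set $X = \mathcal U_0/\sqrt 2$; dividing (4) by $2$ gives $X^2 = 2X + 4 + 4\mathcal V$, and using $X*\mathcal V = X$ (the just-proved (5)) and $\mathcal V * \mathcal V = 1$ (Lemma~\ref{lem:rel4}(1)) one computes $X^3 = 2X^2 + 8X$, whence $X*(X-4)*(X+2) = X^3 - 2X^2 - 8X = 0$. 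The main obstacle is the bookkeeping in (4) (and the value comparison in (3)): determining precisely which admissible double coset each single-coset contribution lands in and summing the $\gamma$-values without sign or root-of-unity errors, which is exactly what produces the off-diagonal coefficient $2\sqrt 2$.
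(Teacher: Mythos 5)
Your overall framework --- locate the support of each convolution via the single-coset decompositions of Lemma~\ref{lem:decomp1} and \cite[Lemma 3.4]{B-PII}, then evaluate at a representative --- is the paper's, and your treatment of (1), (2) and the formal parts of (5) is acceptable at the level of detail the paper itself gives. But your proof of the braid relation (3), which is the real content of the proposition, has two genuine problems. First, you conclude that $\mathcal{U}_2 * \mathcal{V} * \mathcal{U}_2$ is supported on the single double coset of $\ov{y}(4)\ov{w}(2^{-2})\ov{y}(4)$ because the product of chosen representatives, $w(2^{-2})y(4)w(2^{-2}) = -x(-1/4)$, lies in that coset. That inference is not valid: the support is only \emph{contained} in the product set $\ov{K}_0\ov{w}(2^{-2})\ov{y}(4)\ov{K}_0\ov{w}(2^{-2})\ov{K}_0$, which consists of four single cosets and could a priori meet more than one double coset. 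The multiplicativity criterion of \cite[Lemma 3.4]{B-PII} does not apply here, since $\mu(\ov{w}(2^{-2})\ov{y}(4))\,\mu(\ov{w}(2^{-2})) = 4 \neq 2 = \mu(\ov{y}(4)\ov{w}(2^{-2})\ov{y}(4))$ --- which is exactly why the paper checks all four products $\tilde{g}\tilde{\beta}_t\tilde{\alpha}_s^{-1}$ explicitly. Second, and more seriously, ``comparing their values produces the factor $\sqrt 2$'' is an assertion, not a computation. The $\sqrt 2$ arises because exactly two single cosets contribute at $\ov{y}(4)\ov{w}(2^{-2})\ov{y}(4)$, with metaplectic signs ($\epsilon$, $\eta$, the Kubota cocycle $\sigma_2$) that must be tracked; the paper finds the two contributions are $1$ and $\gamma((-I,1))$, summing to $\sqrt{2}\,\zeta_8$ times the target value. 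Without that sign bookkeeping one cannot exclude a cancellation giving $0$ or a sum giving $\sqrt{2}\,i$; in particular your slogan $\zeta_8+\ov{\zeta_8}=\sqrt 2$ is not literally where this $\sqrt 2$ comes from.

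A smaller point: you prove (4) by a direct evaluation over the $\mu(\ov{w}(1)) = 8$ single cosets, again only asserting the coefficients $8$, $2\sqrt 2$, $8$; this is the least substantiated and most laborious part of your write-up. The paper gets (4) and the absorption identity in (5) for free from (1)--(3), Lemma~\ref{lem:rel4} and the factorization $\mathcal{U}_0 = \mathcal{T}_1 * \mathcal{U}_1 = \mathcal{U}_1 * \mathcal{U}_2 * \mathcal{U}_1$ supplied by Lemma~\ref{lem:rel3}: expanding $(\mathcal{U}_1 * \mathcal{U}_2 * \mathcal{U}_1)^{*2}$ and using $\mathcal{U}_1^{*2} = 2+2\mathcal{V}$, $\mathcal{U}_2^{*2}=2$, the braid relation and $\mathcal{U}_1 * \mathcal{V}=\mathcal{V} * \mathcal{U}_1=\mathcal{U}_1$ yields $8+8\mathcal{V}+2\sqrt2\,\mathcal{U}_0$ with no further coset work. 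You should adopt that route and concentrate the explicit cocycle computation in (1)--(3), where it is unavoidable.
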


We shall use the following version of \cite[Lemma 3.3]{B-PII}.
\begin{lem}\label{lem:rel1'}
Let $f_1,\ f_2 \in H(\gamma)$ and $f_1$ is supported on 
$\ov{K}_{0}\tilde{x}\ov{K}_{0}=\bigcup_{i=1}^{m}\ov{K}_0\tilde{\alpha}_i$
and 
$f_2$ is supported on 
$\ov{K}_{0}\tilde{y}\ov{K}_{0}$ and 
let $\ov{K}_{0}\tilde{y}^{-1}\ov{K}_{0}= \bigsqcup_{j=1}^n\tilde{\beta}_j\ov{K}_0$. Then 
\[f_1 * f_2 (\tilde{g}) = \sum_{j=1}^{n}f_1(\tilde{g}\tilde{\beta}_j)
f_2(\tilde{\beta}_j^{-1})\]
and the non-zero summands are the ones for which there exists a 
$j$ such that $\tilde{g}\tilde{\beta}_j \in K_0\tilde{\alpha}_i$.
\end{lem}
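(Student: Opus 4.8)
The plan is to compute $f_1*f_2(\tilde g)$ directly from its integral definition and collapse it to a finite sum by combining the support condition on $f_2$ with the bi-$\gamma$-equivariance of both functions. I would start from the second form of the convolution recorded in the preliminaries, written with $\tilde g$ as the evaluation point and $\tilde h$ as the integration variable,
\[
f_1 * f_2(\tilde{g}) = \int_{\DSL_2(\Q_2)} f_1(\tilde{g}\tilde{h})f_2(\tilde{h}^{-1})\, d\tilde{h}.
\]
The first observation is that $f_2(\tilde{h}^{-1})$ vanishes unless $\tilde{h}^{-1} \in \ov{K}_0\tilde{y}\ov{K}_0$, equivalently $\tilde{h} \in \ov{K}_0\tilde{y}^{-1}\ov{K}_0 = \bigsqcup_{j=1}^{n}\tilde{\beta}_j\ov{K}_0$. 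Hence the integral breaks up as a finite sum of integrals over the left cosets $\tilde{\beta}_j\ov{K}_0$, which is where the index $j$ of the claimed formula comes from.

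On the coset $\tilde{\beta}_j\ov{K}_0$ I would parametrize $\tilde{h} = \tilde{\beta}_j\tilde{k}$ with $\tilde{k} \in \ov{K}_0$, so that $\tilde{h}^{-1} = \tilde{k}^{-1}\tilde{\beta}_j^{-1}$. Applying the defining equivariance $f(\tilde{k}_1\tilde{g}\tilde{k}_2) = \overline{\gamma}(\tilde{k}_1)\overline{\gamma}(\tilde{k}_2)f(\tilde{g})$ to each factor gives $f_1(\tilde{g}\tilde{\beta}_j\tilde{k}) = \overline{\gamma}(\tilde{k})f_1(\tilde{g}\tilde{\beta}_j)$ and $f_2(\tilde{k}^{-1}\tilde{\beta}_j^{-1}) = \overline{\gamma}(\tilde{k}^{-1})f_2(\tilde{\beta}_j^{-1})$. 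Since $\gamma$ is a genuine character, $\overline{\gamma}$ is a homomorphism and the two scalar factors cancel: $\overline{\gamma}(\tilde{k})\overline{\gamma}(\tilde{k}^{-1}) = \overline{\gamma}(\tilde{k}\tilde{k}^{-1}) = 1$. Thus the integrand is constant on each coset, equal to $f_1(\tilde{g}\tilde{\beta}_j)f_2(\tilde{\beta}_j^{-1})$, and integrating against the Haar measure normalized so that $\mathrm{vol}(\ov{K}_0) = 1$ produces exactly $\sum_{j=1}^n f_1(\tilde{g}\tilde{\beta}_j)f_2(\tilde{\beta}_j^{-1})$.

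For the final assertion identifying which summands survive, I would note that $f_2(\tilde{\beta}_j^{-1})$ is automatically a value on the support double coset $\ov{K}_0\tilde{y}\ov{K}_0$ of $f_2$, so the vanishing of a summand is controlled by the factor $f_1(\tilde{g}\tilde{\beta}_j)$. Because $f_1$ is supported on $\ov{K}_0\tilde{x}\ov{K}_0 = \bigcup_{i=1}^{m}\ov{K}_0\tilde{\alpha}_i$, the factor $f_1(\tilde{g}\tilde{\beta}_j)$ is nonzero precisely when $\tilde{g}\tilde{\beta}_j$ lies in some left coset $\ov{K}_0\tilde{\alpha}_i$, which is the stated criterion.

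The computation is essentially routine, and the one place demanding care is purely metaplectic. The hard part is checking that passing to the double cover introduces no stray cocycle factor: because $\ov{K}_0$ is the full preimage of $K_0^2(8)$ and $\gamma$ is a \emph{genuine} character of it, the Kubota--Gelbart cocycle is absorbed into the group multiplications defining $\tilde{h} = \tilde{\beta}_j\tilde{k}$ and $\tilde{h}^{-1} = \tilde{k}^{-1}\tilde{\beta}_j^{-1}$, so the equivariance identities and the cancellation $\overline{\gamma}(\tilde{k})\overline{\gamma}(\tilde{k}^{-1}) = 1$ hold verbatim in the cover. This is the only step where the genuine (rather than linear) setting must be invoked, and the argument is the analogue of \cite[Lemma 3.3]{B-PII} adapted to $K_0 = K_0^2(8)$.
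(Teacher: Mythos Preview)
Your argument is correct and is exactly the standard unwinding of the convolution integral via the left-coset decomposition of the support of $f_2$; the paper itself gives no proof here, merely presenting the lemma as a variant of \cite[Lemma 3.3]{B-PII}, so your write-up is precisely the omitted verification.
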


\begin{proof}[Proof of Proposition~\ref{prop:rel5}]
We shall prove $(3)$. The proof of $(1)$ and $(2)$ are similar.
Let $\mathcal{W}_2 = \mathcal{U}_2 * \mathcal{V}$ and 
$\mathcal{Z}_2 =\mathcal{V}* \mathcal{U}_2 * \mathcal{V}$. 
Then using the decomposition in Lemma~\ref{lem:decomp1} and \cite[Lemma 3.4]{B-PII}
we see that $\mathcal{W}_2,\ \mathcal{Z}_2$ are respectively supported on 
$\ov{K}_0\ov{w}(2^{-2})\ov{y}(4)\ov{K}_0$ and 
$\ov{K}_0\ov{y}(4)\ov{w}(2^{-2})\ov{y}(4)\ov{K}_0$ and 
$\mathcal{W}_2(\ov{w}(2^{-2})\ov{y}(4)) = \ov{\gamma}(\ov{w}(1)) = 
\mathcal{Z}_2(\ov{y}(4)\ov{w}(2^{-2})\ov{y}(4))$.

So to get the identity we will first compute the support of 
$\mathcal{W}_2 * \mathcal{U}_2$.

Using Lemma~\ref{lem:decomp1}, 
$\ov{K}_0 \ov{w}(2^{-2})\ov{y}(4)\ov{K}_0 = 
\bigsqcup_{s=0,1}\ov{K}_0\tilde{\alpha}_s$ and 
$\ov{K}_0 \ov{w}(2^{-2})^{-1}\ov{K}_0= 
\bigsqcup_{t=0,1}\tilde{\beta}_t\ov{K}_0$ where
\[\tilde{\alpha}_s=\ov{w}(2^{-2})\ov{y}(4+8s), \quad \tilde{\beta}_t= \ov{y}(-8t)\ov{w}(-2^{-2}),\]
The matrix part of $\tilde{\beta}_t\tilde{\alpha}_s^{-1}$ are 
\[\mat{-1}{-1/4}{0}{-1} \text{ if $t=s=0$}, \quad \mat{-1}{-1/4}{8}{1} \text{ if $t=1,\ s=0$}, \]
\[\mat{-1}{-3/4}{0}{-1} \text{ if $t=0,\ s=1$}, \quad \mat{-1}{-3/4}{8}{5} \text{ if $t=s=1$}. \]
Running $\tilde{g}$ over the double coset representatives we see that
$\tilde{g}\tilde{\beta}_t\tilde{\alpha}_s^{-1} \in \ov{K}_0$ 
implies that $\tilde{g}$ is in the double coset of 
$\ov{y}(4)\ov{w}(2^{-2})\ov{y}(4)$. 
Thus $\mathcal{W}_2 * \mathcal{U}_2$ is 
supported on $\ov{K}_0\ov{y}(4)\ov{w}(2^{-2})\ov{y}(4)\ov{K}_0$. 
Consequently $\mathcal{U}_2 * \mathcal{V} * \mathcal{U}_2 = 
\mathcal{W}_2 * \mathcal{U}_2 = \alpha \mathcal{Z}_2$
where one can compute $\alpha$ by computing 
$\mathcal{W}_2 * \mathcal{U}_2 (\tilde{g})$ where 
$\tilde{g}=\ov{y}(4)\ov{w}(2^{-2})\ov{y}(4)=:(C,\epsilon)$ and 
$\epsilon = \sigma_2(y(4), w(2^{-2}))\sigma_2(y(4)w(2^{-2}), y(4))$.
By Lemma~\ref{lem:rel1'}, 
\begin{equation*}
\mathcal{W}_2 * \mathcal{U}_2(\tilde{g}) = 
\sum_{t=0,1}\mathcal{W}_2(\tilde{g}\tilde{\beta}_t)\mathcal{U}_2(\tilde{\beta}_t^{-1}) 
= \mathcal{U}_2(\ov{w}(2^{-2}))\sum_{t=0,1}\mathcal{W}_2(\tilde{g}\tilde{\beta}_t).
\end{equation*}
Let $A_s=w(2^{-2})y(4+8s)$ and $B_t=y(-8t)w(-2^{-2})$. 
Then the matrix part of 
$\tilde{g}\tilde{\beta}_t\tilde{\alpha}_s^{-1}$ is $CB_tA_s^{-1}$ which is 
\[\mat{-1}{-1}{0}{-1} \text{ if $t=0,\ s=1$}, \qquad  \mat{1}{0}{8}{1}\text{ if $t=1,\ s=0$},\]
and the sigma-factor of 
$\tilde{g}\tilde{\beta}_t\tilde{\alpha}_s^{-1}$ is $\epsilon \eta  \sigma(C, B_tA_s^{-1})$ where
\[\eta:=\sigma(w(2^{-2}),y(4+8s))  \sigma(A_s,A_s^{-1})   
\sigma(y(-8t), w(-2^{-2}))  \sigma(B_t, A_s^{-1}).\]
$\eta$ turns out to be $-1$ when $t=0,\ s=1$ and $1$ when $t=1,\ s=0$. Thus 
\[\tilde{g}\tilde{\beta}_0 = (\mat{-1}{-1}{0}{-1},-1)\ov{w}(2^{-2})\ov{y}(4)\ov{y}(8),
\ 
\tilde{g}\tilde{\beta}_1 = (\mat{1}{0}{8}{1},1)\ov{w}(2^{-2})\ov{y}(4).\]
Thus
\[\mathcal{W}_2 * \mathcal{U}_2(\tilde{g})= 
\ov{\gamma}(\ov{w}(1))^2
(\gamma((-I,1))+1).\]
Hence $\alpha=\sqrt{2}$ and  
$\mathcal{U}_2 * \mathcal{V} * \mathcal{U}_2 = \sqrt{2}\ \mathcal{Z}_2 = \sqrt{2}\ \mathcal{V} * \mathcal{U}_2 * \mathcal{V}$.

The proof of $(4)$, $(5)$ now follows using $(1)$, $(2)$, $(3)$ above, 
Lemma~\ref{lem:rel4} and the relation
$\mathcal{U}_0 = \mathcal{T}_1 * \mathcal{U}_{1} = 
\mathcal{U}_1 * \mathcal{U}_2 * \mathcal{U}_1$. 
\end{proof}

For $n \ge 1$, define $\mathcal{R}_n := \mathcal{T}_n * \mathcal{V}$, 
$\mathcal{S}_n := \mathcal{V} * \mathcal{T}_{-n}$ and for $n \ge 2$  
$\mathcal{W}_n:= \mathcal{U}_n * \mathcal{V}$, 
$\mathcal{Y}_n:= \mathcal{V} * \mathcal{U}_n$ and 
$\mathcal{Z}_n:= \mathcal{V} * \mathcal{U}_n * \mathcal{V}$.
Note that by Lemma~\ref{lem:decomp1} and \cite[Lemma 3.4]{B-PII}, 
$\mathcal{R}_n$, $S_n$, $W_n$, $Y_n$, $Z_n$ are respectively supported on the 
$\ov{K}_0$ double cosets of $\ov{h}(2^n)\ov{y}(4)$, 
$\ov{y}(4)\ov{h}(2^{-n})$, $\ov{w}(2^{-n})\ov{y}(4)$, 
$\ov{y}(4)\ov{w}(2^{-n})$ and 
$\ov{y}(4)\ov{w}(2^{-n})\ov{y}(4)$. 
Thus it follows from Proposition~\ref{prop:sup1} that 
$\mathcal{T}_n$, $\mathcal{U}_n$ for $n \in \Z$, $\mathcal{V}$,  
$\mathcal{R}_n,\ \mathcal{S}_n$ for $n \ge 1$ and 
$\mathcal{W}_n,\ \mathcal{Y}_n,\ \mathcal{Z}_n$ for $n \ge 2$ form basis 
elements of $H(\ov{K}_0, \chi_1)$ as a vector space. Indeed it follows from
Lemma~\ref{lem:rel3} that $\mathcal{U}_1$, $\mathcal{U}_2$ and $\mathcal{V}$ 
generates $H(\ov{K}_0, \chi_1)$ as an algebra.

Let $\widehat{\mathcal{U}_1} = \frac{1}{\sqrt{2}} \mathcal{U}_1$, $\widehat{\mathcal{U}_2} = \frac{1}{\sqrt{2}}\mathcal{U}_2$ and  $\widehat{\mathcal{U}_0} = \frac{1}{2\sqrt{2}} \mathcal{U}_0$.
Using relations above, we obtain the following theorem.
\begin{thm}\label{thm:genrelchi_1} 
The Hecke algebra $H(\ov{K_0^2(8)}, \chi_1)$ is generated by 
$\widehat{\mathcal{U}_1}$, $\widehat{\mathcal{U}_2}$ and $\mathcal{V}$ 
modulo the relations:
 \begin{enumerate}
 \item $\widehat{\mathcal{U}_1}^2 = 1 + \mathcal{V}$.
 \item $\widehat{\mathcal{U}_2}^2 = 1$.
 \item $\widehat{\mathcal{U}_1} \mathcal{V} =\mathcal{V} \widehat{\mathcal{U}_1}=\widehat{\mathcal{U}_1}$.
 \item $\widehat{\mathcal{U}_2}\mathcal{V}\widehat{\mathcal{U}_2}= \mathcal{V}\widehat{\mathcal{U}_2}\mathcal{V}$.
 \end{enumerate}
\end{thm}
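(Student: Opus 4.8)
The plan is to read the statement as a presentation result and prove it in two stages: first, that the images of $\widehat{\mathcal{U}_1}$, $\widehat{\mathcal{U}_2}$, $\mathcal{V}$ in $H(\ov{K_0^2(8)}, \chi_1)$ satisfy (1)--(4), which produces a surjection from the abstract algebra defined by these generators and relations onto the Hecke algebra; and second, that this surjection is injective, via a spanning (normal-form) argument matched against the vector-space basis already exhibited. The first stage is just bookkeeping with scalars. Relation (2) is $\mathcal{U}_2 * \mathcal{U}_2 = 2$ of Proposition~\ref{prop:rel5}(1) divided by $2$; relation (1) is $\mathcal{U}_1 * \mathcal{U}_1 = 2 + 2\mathcal{V}$ of Proposition~\ref{prop:rel5}(2) divided by $2$; relation (3) is Lemma~\ref{lem:rel4}(2) divided by $\sqrt{2}$; and relation (4) is Proposition~\ref{prop:rel5}(3) after substituting $\mathcal{U}_2 = \sqrt{2}\,\widehat{\mathcal{U}_2}$, where both sides pick up a factor $2$ (two copies of $\sqrt{2}$ on the left, one copy times the explicit coefficient $\sqrt{2}$ on the right), which cancels. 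Letting $\mathcal{A}$ denote the abstract $\C$-algebra on generators $u_1, u_2, v$ modulo (1)--(4), we thus obtain an algebra map $\pi : \mathcal{A} \to H(\ov{K}_0, \chi_1)$ with $u_1 \mapsto \widehat{\mathcal{U}_1}$, $u_2 \mapsto \widehat{\mathcal{U}_2}$, $v \mapsto \mathcal{V}$; it is surjective because the paragraph preceding the theorem shows these three elements generate the Hecke algebra.

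The second and main stage is to show $\pi$ injective by producing a spanning family of monomials in $\mathcal{A}$ that $\pi$ carries bijectively onto the basis $\{\mathcal{T}_n, \mathcal{U}_n\}_{n \in \Z} \cup \{\mathcal{V}\} \cup \{\mathcal{R}_n, \mathcal{S}_n\}_{n \ge 1} \cup \{\mathcal{W}_n, \mathcal{Y}_n, \mathcal{Z}_n\}_{n \ge 2}$ coming from Proposition~\ref{prop:sup1}. First I would derive $v^2 = 1$ inside $\mathcal{A}$: multiplying $u_1 v = u_1$ on the left by $u_1$ and using $u_1^2 = 1 + v$ gives $(1+v)v = 1 + v$, hence $v^2 = 1$. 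Writing $t_1 := u_1 u_2$ and $t_{-1} := u_2 u_1$, the relations of Lemma~\ref{lem:rel3} pin down which monomial realises each basis element: $t_1^{\,n}$ and $t_{-1}^{\,n}$ give $\mathcal{T}_{\pm n}$, the alternating words $u_2(u_1u_2)^{n-2}$ and $u_1(u_2u_1)^{1-n}$ give $\mathcal{U}_n$, and the $\mathcal{V}$-decorated elements are obtained by appending $v$ on the left, right, or both. The relation $u_1 v = v u_1 = u_1$ forces every $v$ adjacent to a $u_1$ to be absorbed, which is exactly why $\mathcal{W}_n, \mathcal{Y}_n, \mathcal{Z}_n$ occur only for $n \ge 2$, namely when the extremal letter of the word for $\mathcal{U}_n$ is $u_2$ rather than $u_1$. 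Let $B \subset \mathcal{A}$ be this set of monomials.

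To prove $\Span_\C(B) = \mathcal{A}$ I would check that $\Span_\C(B)$ is a left ideal containing $1$, i.e.\ that $u_1 b$, $u_2 b$, $v b$ lie in $\Span_\C(B)$ for every $b \in B$; since $1 \in B$ this forces equality. Each product is reduced using only (1)--(4): $u_2^2 = 1$ and $v^2 = 1$ collapse repeated letters, $u_1 v = v u_1 = u_1$ absorbs stray $v$'s adjacent to $u_1$, and $u_1^2 = 1 + v$ lowers the $u_1$-degree at the cost of a lower-order $v$-term. For instance $u_1 t_1^{\,n} = (1+v)u_2(u_1u_2)^{n-1}$ splits as $\mathcal{U}_{n+1} + \mathcal{Y}_{n+1}$, both in $B$. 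The genuinely delicate case is when a $v$ is deposited between two $u_2$'s: in $u_2 \cdot \mathcal{Y}_n = u_2 v u_2 (u_1u_2)^{n-2}$ one must invoke the braid relation (4), which makes $\langle u_2, v\rangle$ a quotient of $\C[S_3]$, to rewrite $u_2 v u_2 = v u_2 v$ and then feed the resulting $v$ either into a neighbouring $u_1$ (giving back $\mathcal{Y}_n$ for $n\ge3$) or record it as $\mathcal{Z}_2$ for $n=2$. Verifying this closure uniformly in $n$ is routine but bookkeeping-heavy.

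The main obstacle is precisely this reduction: one must confirm that $\Span_\C(B)$ is closed under left multiplication and that no monomial escapes $B$, which is the step where relation (4) is indispensable and where the interaction of $u_1^2 = 1+v$ with a long alternating string of $u_2$'s has to be controlled. Once closure is established, $\pi$ sends the spanning set $B$ onto the linearly independent basis of $H(\ov{K}_0, \chi_1)$, so $\pi$ is injective, hence an isomorphism, and (1)--(4) form a complete set of defining relations.
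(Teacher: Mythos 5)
Your proposal is correct and follows the route the paper itself takes (and largely leaves implicit): the scalar bookkeeping deducing relations (1)--(4) from Lemma~\ref{lem:rel4} and Proposition~\ref{prop:rel5} checks out, your derivation of $v^2=1$ from (1) and (3) correctly accounts for the one relation of Lemma~\ref{lem:rel4} not restated in the theorem, and your monomial normal form matches bijectively the double-coset basis $\{\mathcal{T}_n,\mathcal{U}_n\}_{n\in\Z}\cup\{\mathcal{V}\}\cup\{\mathcal{R}_n,\mathcal{S}_n\}_{n\ge1}\cup\{\mathcal{W}_n,\mathcal{Y}_n,\mathcal{Z}_n\}_{n\ge2}$ extracted from Proposition~\ref{prop:sup1} and Lemma~\ref{lem:decomp1}. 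The spanning-closure verification you flag as bookkeeping-heavy is exactly the content the paper compresses into ``Using relations above, we obtain the following theorem,'' so your write-up supplies the completeness-of-relations step in the intended way.
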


\subsection{The algebra $H(\ov{K_0^2(8)}, \chi_2)$}
Take $\gamma = \chi_2$, we will similarly get generators and relations for the Hecke algebra $H(\chi_2)$.

Define 
$\mathcal{Z}_1' \in H(\ov{K}_0,\chi_2)$ 
supported only on the double coset of $\ov{y}(2)\ov{w}(2^{-1})\ov{y}(2)$ such that 
$\mathcal{Z}_1'(\ov{y}(2)\ov{w}(2^{-1})\ov{y}(2))=1$. Note that 
$\ov{y}(2)\ov{w}(2^{-1})\ov{y}(2) = \ov{x}(1/2)$ and it normalizes $\ov{K}_0$.
As before we get that $\mathcal{Z}_1' * \mathcal{Z}_1' =1$.

Define $\mathcal{V}' \in H(\chi_2)$ 
supported precisely on $\ov{K}_0\ov{y}(2)\ov{K}_0$ such that 
$\mathcal{V}'(\ov{y}(2),1)= \frac{1+ \gamma((-I,1))}{\sqrt{2}}$. 
We have the following proposition.
\begin{prop}
\begin{enumerate}
 \item $\mathcal{Z}_1' * \mathcal{U}_1 * \mathcal{Z}_1' = \mathcal{V}'$.
 \item $\mathcal{U}_2 * \mathcal{Z}_1'=\mathcal{U}_2 = \mathcal{Z}_1' * \mathcal{U}_2$. 
 \item $\mathcal{U}_2 * \mathcal{U}_2 = 2 + 2 \ \mathcal{Z}_1'$.
 \item $\mathcal{U}_1 * \mathcal{U}_1 =2$.
 \item $\mathcal{U}_1 * \mathcal{Z}_1' * \mathcal{U}_1 = \sqrt{2}\ 
 \mathcal{V}' = \sqrt{2} \ \mathcal{Z}_1' * \mathcal{U}_1 * \mathcal{Z}_1'$.
 \end{enumerate}
\end{prop}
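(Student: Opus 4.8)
The plan is to mirror the proof of Proposition~\ref{prop:rel5} throughout: for each identity I first determine the double cosets supporting the product, by combining the single-coset decompositions of Lemma~\ref{lem:decomp1} with the multiplicativity of the measure $\mu$ from \cite[Lemma 3.4]{B-PII} and the support list of Proposition~\ref{prop:sup1}, and then I compute the scalar coefficients by evaluating the convolution at a fixed double-coset representative via Lemma~\ref{lem:rel1'}, carrying along the Kubota--Gelbart cocycle $\sigma_2$ and the values of $\chi_2$. The one new structural feature is that $\ov{x}(1/2)=\ov{y}(2)\ov{w}(2^{-1})\ov{y}(2)$ normalizes $\ov{K}_0$, so $\mathcal{Z}_1'$ is supported on the \emph{single} coset $\ov{x}(1/2)\ov{K}_0=\ov{K}_0\ov{x}(1/2)$ and acts as a weighted translation by this normalizer element; since $\mathcal{Z}_1'*\mathcal{Z}_1'=1$, conjugation by $\mathcal{Z}_1'$ is an involutive automorphism of $H(\ov{K}_0,\chi_2)$, and relations (1) and (2) simply record its effect on $\mathcal{U}_1$ and $\mathcal{U}_2$.

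I would dispose of (2) and (1) first, as they are the $\chi_2$-analogues of Lemma~\ref{lem:rel4}. A direct matrix computation gives $w(2^{-2})^{-1}x(1/2)w(2^{-2})=\smallmat{1}{0}{-8}{1}\in K_0$, hence $\ov{x}(1/2)\ov{w}(2^{-2})\in\ov{w}(2^{-2})\ov{K}_0$ and likewise on the other side; thus $\mathcal{Z}_1'*\mathcal{U}_2$ and $\mathcal{U}_2*\mathcal{Z}_1'$ remain supported on the double coset of $\ov{w}(2^{-2})$, and checking that the cocycle and $\chi_2$ contributions are trivial yields coefficient $1$, which is (2). For (1), the identity $x(1/2)w(2^{-1})x(1/2)=-y(2)$ shows that conjugating $\mathcal{U}_1$ by the normalizer element $\ov{x}(1/2)$ produces an element supported on the double coset of $\ov{y}(2)$; evaluating at $\ov{y}(2)$ via Lemma~\ref{lem:rel1'} and matching the normalization $\frac{1+\gamma((-I,1))}{\sqrt{2}}$ identifies $\mathcal{Z}_1'*\mathcal{U}_1*\mathcal{Z}_1'$ with $\mathcal{V}'$.

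Relations (4) and (3) are the analogues of Proposition~\ref{prop:rel5}(1),(2), and here there is a clean conceptual reason for the asymmetry between a pure scalar and a scalar-plus-$\mathcal{Z}_1'$. In both cases $w(2^{-1})^2=w(2^{-2})^2=-I$, so the diagonal term of the convolution lands in $\ov{K}_0$ and, since $\mu(\ov{w}(2^{-1}))=\mu(\ov{w}(2^{-2}))=2$, contributes the scalar $2$. The difference is entirely in the cross term: using the decompositions Lemma~\ref{lem:decomp1}(d) and (c) one computes $w(2^{-1})\,x(1)\,w(2^{-1})=-y(-4)$, which lies in the double coset of $\ov{y}(4)$ on which $H(\ov{K}_0,\chi_2)$ \emph{vanishes} (the first vanishing lemma of this section), so the cross term drops out and $\mathcal{U}_1*\mathcal{U}_1=2$; whereas $w(2^{-2})\,y(8)\,w(2^{-2})=-x(-1/2)$ lies in the double coset of $\ov{x}(1/2)$, which \emph{is} in the support, and after the cocycle bookkeeping it contributes $2\mathcal{Z}_1'$, giving $\mathcal{U}_2*\mathcal{U}_2=2+2\mathcal{Z}_1'$. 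Finally (5) is the exact analogue of Proposition~\ref{prop:rel5}(3): I compute that $\mathcal{U}_1*\mathcal{Z}_1'*\mathcal{U}_1$ is supported on the double coset of $\ov{y}(2)$, evaluate the coefficient by the same $\tilde{\beta}$-sum of Lemma~\ref{lem:rel1'} to obtain $\sqrt{2}$, so $\mathcal{U}_1*\mathcal{Z}_1'*\mathcal{U}_1=\sqrt{2}\,\mathcal{V}'$; the second equality of (5) is then just $\sqrt{2}$ times (1).

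The main obstacle is the cocycle bookkeeping that produces the precise constants $2$, $2\mathcal{Z}_1'$ and $\sqrt{2}$: as in Proposition~\ref{prop:rel5} these reduce to evaluating products of the Kubota cocycle $\sigma_2$, i.e.\ several $2$-adic Hilbert symbols $\hst{a,b}$, on the relevant matrices and then applying $\chi_2$ together with the $\delta$-factor of the triangular decomposition \eqref{eq:td}. No conceptual difficulty arises beyond keeping these Hilbert-symbol evaluations and signs straight; the genuinely illuminating point---that the whole discrepancy between (3) and (4) is controlled by whether the single-coset cross term falls on $\ov{x}(1/2)$ or on the $\chi_2$-null coset $\ov{y}(4)$---is already isolated above.
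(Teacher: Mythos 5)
Your proposal is correct and follows essentially the same route as the paper: determine the support of each convolution from the single-coset decompositions of Lemma~\ref{lem:decomp1} together with \cite[Lemma 3.4]{B-PII} and the normalizing property of $\ov{x}(1/2)$, then evaluate at a chosen representative via Lemma~\ref{lem:rel1'} with the cocycle and $\chi_2$ bookkeeping --- exactly how the paper treats $(1)$ (in two steps, through $\mathcal{U}_1 * \mathcal{Z}_1'$) and how it refers $(3)$--$(5)$ back to the method of Proposition~\ref{prop:rel5}. Your key matrix identities ($x(1/2)w(2^{-1})x(1/2)=-y(2)$, $w(2^{-1})x(1)w(2^{-1})=-y(-4)$ landing on the $\chi_2$-null coset of $y(4)$, and $w(2^{-2})y(8)w(2^{-2})=-x(-1/2)$ landing on the supported coset of $x(1/2)$) all check out and correctly account for the constants.
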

\begin{proof}
For $(1)$ observe that
\[\ov{w}(2^{-1})\ov{x}(1/2) = (\mat{0}{1/2}{-2}{-1},-1)= 
\ov{y}(2)\ov{w}(2^{-1})\ov{x}(1).\]
Thus $\mathcal{U}_1 * \mathcal{Z}_1'$ is supported precisely 
on $\ov{K}_0\ov{y}(2)\ov{w}(2^{-1})\ov{K}_0$ and it's value 
at $\ov{y}(2)\ov{w}(2^{-1}))$ is $\frac{1 -\gamma((-I,1))}{\sqrt{2}}$.
Further since
\[\ov{x}(1/2)\ov{y}(2)\ov{w}(2^{-1}) = 
(\mat{-1}{1}{-2}{1},1)= \ov{y}(2)(-x(-1),1)\]
 we get 
that $\mathcal{Z}_1' * \mathcal{Y}_1'$ is supported 
precisely at $\ov{K}_0\ov{y}(2)\ov{K}_0$ and that
\[\mathcal{Z}_1' * \mathcal{Y}_1' (\ov{y}(2))=
\frac{1 -\gamma((-I,1))}{\sqrt{2}}  \gamma((-I,1))= \frac{1 +\gamma((-I,1))}{\sqrt{2}} 
= \mathcal{V}'(\ov{y}(2)).\]
Part $(2)$ follows similarly. 
For $(3)$, $(4)$, $(5)$ we follow as in Proposition~\ref{prop:rel5}.
\end{proof}

Let $\widehat{\mathcal{U}_1} = \frac{1}{\sqrt{2}} \mathcal{U}_1$ and $\widehat{\mathcal{U}_2} = \frac{1}{\sqrt{2}}\mathcal{U}_2$.  
\begin{thm}\label{thm:genrelchi_2} 
The Hecke algebra $H(\ov{K_0^2(8)}, \chi_2)$ is generated by $\widehat{\mathcal{U}_1}$, $\widehat{\mathcal{U}_2}$ and 
$\mathcal{Z}_1'$ modulo the relations:
 \begin{enumerate}
 \item $\widehat{\mathcal{U}_1}^2 = 1$.
 \item $\widehat{\mathcal{U}_2}^2 = 1 + \mathcal{Z}_1'$.
 \item $\widehat{\mathcal{U}_2} \mathcal{Z}_1' =\widehat{\mathcal{U}_2} = 
 \mathcal{Z}_1'  \widehat{\mathcal{U}_2} $.
 \item $\widehat{\mathcal{U}_1}  \mathcal{Z}_1'  \widehat{\mathcal{U}_1}= 
 \mathcal{Z}_1'  \widehat{\mathcal{U}_1} \mathcal{Z}_1'$.
 \end{enumerate}
\end{thm}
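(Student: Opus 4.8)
The plan is to construct a surjective algebra homomorphism $\phi$ from the abstractly presented algebra $A$ (on generators $\widehat{\mathcal{U}_1},\widehat{\mathcal{U}_2},\mathcal{Z}_1'$ subject to relations (1)--(4)) onto $H(\ov{K_0^2(8)},\chi_2)$, and then to upgrade it to an isomorphism by matching a normal-form spanning set of $A$ against the support basis of Proposition~\ref{prop:sup1}. The first task is to see that $\phi$ is well defined, i.e. that the four relations actually hold among the images. Substituting $\mathcal{U}_i=\sqrt{2}\,\widehat{\mathcal{U}_i}$ into the five identities of the Proposition immediately preceding the theorem, the identity $\mathcal{U}_1*\mathcal{U}_1=2$ gives relation (1), the identity $\mathcal{U}_2*\mathcal{U}_2=2+2\,\mathcal{Z}_1'$ gives relation (2), the identity $\mathcal{U}_2*\mathcal{Z}_1'=\mathcal{U}_2=\mathcal{Z}_1'*\mathcal{U}_2$ gives relation (3), and the identity $\mathcal{U}_1*\mathcal{Z}_1'*\mathcal{U}_1=\sqrt{2}\,\mathcal{V}'=\sqrt{2}\,\mathcal{Z}_1'*\mathcal{U}_1*\mathcal{Z}_1'$ gives $\widehat{\mathcal{U}_1}\mathcal{Z}_1'\widehat{\mathcal{U}_1}=\frac{1}{\sqrt{2}}\mathcal{V}'=\mathcal{Z}_1'\widehat{\mathcal{U}_1}\mathcal{Z}_1'$, which is relation (4). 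Hence $\phi$ exists.

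Next I would show $\phi$ is onto. By Lemma~\ref{lem:rel3} the pair $\mathcal{U}_1,\mathcal{U}_2$ already generates the subalgebra spanned by $\{\mathcal{T}_n,\mathcal{U}_n\}_{n\in\Z}$: one has $\mathcal{T}_1=\mathcal{U}_1*\mathcal{U}_2$, then $\mathcal{U}_{2+n}=\mathcal{U}_2*\mathcal{T}_n$ and $\mathcal{T}_{m+n}=\mathcal{T}_m*\mathcal{T}_n$ bootstrap all positive indices, and the negative ones follow symmetrically. Adjoining $\mathcal{Z}_1'$ recovers $\mathcal{V}'=\mathcal{Z}_1'*\mathcal{U}_1*\mathcal{Z}_1'$, and exactly as in the paragraph preceding Theorem~\ref{thm:genrelchi_1} the remaining support basis elements are the products $\mathcal{T}_n*\mathcal{V}'$, $\mathcal{V}'*\mathcal{T}_{-n}$, $\mathcal{U}_n*\mathcal{V}'$, $\mathcal{V}'*\mathcal{U}_n$, $\mathcal{V}'*\mathcal{U}_n*\mathcal{V}'$; by Lemma~\ref{lem:decomp1} and \cite[Lemma 3.4]{B-PII} these are supported precisely on the mixed double cosets $\ov{h}(2^n)\ov{y}(2)$, $\ov{y}(2)\ov{h}(2^{-n})$, $\ov{w}(2^{-n})\ov{y}(2)$, $\ov{y}(2)\ov{w}(2^{-n})$ and $\ov{y}(2)\ov{w}(2^{-n})\ov{y}(2)$ of Proposition~\ref{prop:sup1}. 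Thus $\widehat{\mathcal{U}_1},\widehat{\mathcal{U}_2},\mathcal{Z}_1'$ generate $H(\ov{K_0^2(8)},\chi_2)$ and $\phi$ is surjective.

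The crux is then injectivity, which I would obtain by bounding $\dim A$ by the number of support double cosets. First note that relations (2) and (3) already force $\mathcal{Z}_1'^2=1$: multiplying $\widehat{\mathcal{U}_2}\mathcal{Z}_1'=\widehat{\mathcal{U}_2}$ on the left by $\widehat{\mathcal{U}_2}$ and using $\widehat{\mathcal{U}_2}^2=1+\mathcal{Z}_1'$ gives $(1+\mathcal{Z}_1')\mathcal{Z}_1'=1+\mathcal{Z}_1'$. So $\widehat{\mathcal{U}_1}$ and $\mathcal{Z}_1'$ are two involutions satisfying the length-three braid relation (4), and span a copy of the group algebra of $S_3$ with standard basis $1,\widehat{\mathcal{U}_1},\mathcal{Z}_1',\widehat{\mathcal{U}_1}\mathcal{Z}_1',\mathcal{Z}_1'\widehat{\mathcal{U}_1},\widehat{\mathcal{U}_1}\mathcal{Z}_1'\widehat{\mathcal{U}_1}$; these map under $\phi$ onto the six \emph{distinct} cosets $\ov{K}_0,\ \ov{w}(2^{-1}),\ \ov{y}(2)\ov{w}(2^{-1})\ov{y}(2),\ \ov{y}(2)\ov{w}(2^{-1}),\ \ov{w}(2^{-1})\ov{y}(2),\ \ov{y}(2)$, confirming injectivity on this piece. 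Using relation (2) to eliminate every square of $\widehat{\mathcal{U}_2}$ and relation (3) to absorb every $\mathcal{Z}_1'$ adjacent to an $\widehat{\mathcal{U}_2}$, I would then reduce an arbitrary word to a linear combination of alternating monomials in $\widehat{\mathcal{U}_1},\widehat{\mathcal{U}_2}$ carrying at most a single $\mathcal{Z}_1'$ decoration, and match these reduced monomials bijectively with the double cosets of Proposition~\ref{prop:sup1}.

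I expect this final rewriting to be the main obstacle: one must check that the four relations are confluent, so that every element of $A$ has a well-defined reduced form, and that distinct reduced forms are carried by $\phi$ to elements supported on distinct double cosets, hence linearly independent in $H$. The cleanest bookkeeping is to observe that the present presentation is the image of the one in Theorem~\ref{thm:genrelchi_1} under interchanging $\widehat{\mathcal{U}_1}\leftrightarrow\widehat{\mathcal{U}_2}$ and $\mathcal{V}\leftrightarrow\mathcal{Z}_1'$, so that the abstract rewriting combinatorics are formally identical to the $\chi_1$ case; only the identification of each reduced monomial with the double coset carrying its $\phi$-image, read off from Lemma~\ref{lem:decomp1}, needs to be re-examined against the $\chi_2$ support list. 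Once the reduced monomials are shown to be in bijection with that list, they span $A$ and map to a linearly independent set, so $A$ has dimension at most $\dim H(\ov{K_0^2(8)},\chi_2)$; combined with surjectivity of $\phi$ this forces $\phi$ to be an isomorphism, establishing completeness of the presentation.
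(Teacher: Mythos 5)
Your proposal is correct and follows essentially the same route as the paper: the relations are exactly the preceding Proposition rescaled by $\sqrt{2}$, generation follows from Lemma~\ref{lem:rel3} together with $\mathcal{V}'=\mathcal{Z}_1'*\mathcal{U}_1*\mathcal{Z}_1'$ and the support list of Proposition~\ref{prop:sup1}, and completeness of the presentation comes from matching normal-form monomials against the double-coset basis. The paper in fact leaves this last (injectivity/confluence) step implicit, simply asserting the theorem by analogy with the $\chi_1$ case, so your explicit surjection-plus-dimension-count outline, with the observation that the presentation is the $\widehat{\mathcal{U}_1}\leftrightarrow\widehat{\mathcal{U}_2}$, $\mathcal{V}\leftrightarrow\mathcal{Z}_1'$ mirror of Theorem~\ref{thm:genrelchi_1}, is a faithful (indeed slightly more detailed) rendering of the intended argument.
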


\subsection{Local Shimura correspondence}
Loke-Savin \cite{L-S} observed an isomorphism between the Hecke algbera 
$H(\ov{K_0^2(4)}, \gamma)$ ($\gamma$ a genuine character of $\ov{K_0^2(4)}$
of order $4$) and $\PGL_2(\Q_2)$ Iwahori Hecke algebra and 
called it local Shimura correspondence. In this subsection we prove that
the Hecke algebra $H(\ov{K_0^2(8)}, \chi_i)$, $i=1,2$, is 
isomorphic to the Hecke algebra of $\GL_2(\Q_2)$ 
corresponding to $K_0(4)$ modulo scalars (here $K_0(p^n)$ denotes the subgroup of 
$\GL_2(\Z_p)$ with $(2,1)$-entry in $p^n\Z_p$). We thus verify 
local Shimura correspondence between level $8$ Hecke algebras of 
$\DSL_2(\Q_2)$ and the level $4$ Hecke algebra of $\PGL_2(\Q_2)$.

In \cite{B-P} we give generators and relations for the subalgebra 
of the Hecke algebra of $\GL_2(\Q_p)$ 
corresponding to $K_0(p^n)$ that is supported on $\GL_2(\Z_p)$ for any 
prime $p$ and natural number $n$ but do not consider the full Hecke algebra.
We will now describe the full Hecke algebra $H(\GL_2(\Q_2)//K_0(4))$. 
In this subsection we will follow the notation 
of \cite{B-P}.

For $t \in \Qs_2$, we consider the following elements of $\GL_2(\Q_2)$:\\
\[d(t) = \mat{t}{0}{0}{1},\ \ w(2^n) = \mat{0}{-1}{t}{0}, \ \ 
z(t)= \mat{t}{0}{0}{t}.\]
Note that we are also using notation $w(t)$ for denoting anti-diagonal 
elements of $\SL_2(\Q_2)$, but we hope that this abuse of notation is clear from 
the context.

We have the following lemma. 
\begin{lem}
A complete set of representatives for the double cosets of $\GL_2(\Q_2)$ mod 
$K_0(4)$ (up to central elements $z(t)$) consists of \\
$d(2^n),\ w(2^n)$ for  $n\in \Z$, $d(2^n)y(2)$ for $n \geq 0$,
$y(2)d(2^{-n})$ for $n\geq 1$ and
$y(2)w(2^n),\ w(2^n)y(2),\ y(2)w(2^n)y(2)$ for $n\geq 2$. 
\end{lem}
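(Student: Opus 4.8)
The plan is to classify the $K_0(4)$-double cosets of $\GL_2(\Q_2)$ by reducing to the $\SL_2$-theory already developed, exploiting the determinant and the central elements $z(t)$. First I would note that modulo the center $z(t)$ and modulo right/left multiplication by scalars, every element of $\GL_2(\Q_2)$ can be written with determinant a power of $2$; indeed $\det$ gives a map $\GL_2(\Q_2)\to \Qs_2$, and since $K_0(4)$ contains $\GL_2(\Z_2)$-type units in its diagonal (e.g. $d(u)$ for $u\in\Zs_2$) and $z(t)$ absorbs the central part, the double coset space up to $z(t)$ is governed by the value of $\ord_2(\det)$ together with the $\SL_2$-double coset data. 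So the first reduction is to fix the determinant to lie in $\{2^n\}$ and peel off the central scalar.

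Next I would invoke the Cartan/Iwahori-type decomposition for $\GL_2$ relative to $K_0(4)$. The key structural input is that $K_0(4)$ is an Iwahori-like subgroup (congruence condition only on the lower-left entry mod $4$), so the double cosets are indexed by a subset of the extended affine Weyl group together with the torus. Concretely, I would write a general $g\in\GL_2(\Q_2)$ in a Bruhat-type form and use that $d(t),z(t)$ generate the diagonal part while $w(2^n)$ supplies the Weyl reflection. The representatives $d(2^n), w(2^n)$ should cover the ``generic'' cosets, and the extra representatives involving $y(2)=\smallmat{1}{0}{2}{1}$ arise precisely because the congruence is mod $4$ rather than mod $2$: elements with lower-left $2$-adic valuation exactly $1$ are not absorbed into $K_0(4)$ and must be recorded by an explicit $y(2)$ on one or both sides. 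This mirrors exactly the $\SL_2$ double coset list in Proposition~\ref{prop:1}, where the same $y(2),y(4)$ obstructions appear.

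The concrete method for verifying the list is the standard one already used in the paper: $K_0(4)gK_0(4)=K_0(4)g'K_0(4)$ iff there is $k\in K_0(4)$ with $gkg'^{-1}\in K_0(4)$ (up to a central $z(t)$). So I would (i) show every $g$ is $K_0(4)$-equivalent to one of the listed representatives by row/column reduction modulo the congruence, carefully tracking when the $(2,1)$-entry forces a valuation-$1$ correction term $y(2)$; and (ii) show the listed representatives are pairwise inequivalent, using the valuation of $\det$ (which distinguishes the $d(2^n)$ from each other and from the $w(2^n)$ via $\ord_2$ of the anti-diagonal entries) and a finer invariant such as the valuation of the lower-left entry of a normalized representative to separate the $y(2)$-decorated cosets from the plain ones. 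The ranges $n\ge 0$, $n\ge 1$, $n\ge 2$ should fall out of positivity/negativity constraints forced by requiring the reduction matrix to lie in $K_0(4)$, exactly as the analogous ranges appear in Lemma~\ref{lem:decomp1}.

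The main obstacle, I expect, is the bookkeeping at the boundary cases where the $y(2)$ correction interacts with the Weyl element $w(2^n)$, i.e. establishing that $y(2)w(2^n)y(2)$ for $n\ge 2$ is genuinely a new double coset and not collapsible to $y(2)w(2^n)$ or $w(2^n)y(2)$, and pinning down the precise lower bound $n\ge 2$ (versus $n\ge 1$) in those two-sided cases. Here a small explicit matrix computation, of the same flavor as the one in the proof of Lemma~\ref{lem:sup1} and the commented-out decomposition proofs, is unavoidable: one writes $w(2^n)^{-1}k\,w(2^n)$ for $k\in K_0(4)$ and checks exactly which congruence conditions survive, and the threshold $n\ge 2$ emerges from needing the off-diagonal entries to stay in $4\Z_2$ after conjugation. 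Everything else is routine reduction theory.
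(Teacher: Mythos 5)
The paper states this lemma without proof, treating it (like Proposition~\ref{prop:1}) as a routine calculation, and your plan is precisely that calculation: normalize the determinant to a power of $2$ using the center and the diagonal units $d(u)\in K_0(4)$, then apply the membership criterion ($K_0(4)gK_0(4)=K_0(4)g'K_0(4)$ iff $gkg'^{-1}\in K_0(4)$ for some $k\in K_0(4)$) together with explicit row/column reduction, the $y(2)$ decorations arising because the congruence is mod $4$ rather than mod $2$ (equivalently $K_0(2)=K_0(4)\sqcup K_0(4)y(2)$ with $y(2)$ normalizing $K_0(4)$) and the thresholds $n\ge 2$ from checking when conjugation by $w(2^n)$ preserves that congruence. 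This is correct and is essentially the same approach the authors use for all the double-coset computations in the paper.
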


We also note the following decomposition of $K_0(4)$ double cosets.
\begin{lem}
\begin{small}
\begin{enumerate}
\item[(a)] For $n \ge 0$,
\[K_0(4)d(2^n)K_0(4) = \bigcup_{s\in \Z_2/2^{n}\Z_2} x(s)d(2^n)K_0(4) = 
\bigcup_{s\in \Z_2/2^{n}\Z_2} K_0(4)d(2^n)y(4s).\]
\item[(b)] For $n \ge 1$,
\[K_0(4)d(2^{-n})K_0(4) = \bigcup_{s\in \Z_2/2^{n}\Z_2} y(4s)d(2^{-n})K_0(4) = 
\bigcup_{s\in \Z_2/2^{n}\Z_2} K_0(4)d(2^{-n})x(s).\]
\item[(c)] For $n \ge 2$,
\[K_0(4)w(2^n)K_0(4) = \bigcup_{s\in \Z_2/2^{n-2}\Z_2} y(4s)w(2^n)K_0(4) = 
\bigcup_{s\in \Z_2/2^{n-2}\Z_2} K_0(4)w(2^n)y(4s).\]
\item[(d)] For $n \le 1$,
\[K_0(4)w(2^n)K_0(4) = \bigcup_{s\in \Z_2/2^{2-n}\Z_2} x(s)w(2^n)K_0(4) = 
\bigcup_{s\in \Z_2/2^{2-n}\Z_2} K_0(4)w(2^n)x(s).\]
\end{enumerate}
\end{small}
\end{lem}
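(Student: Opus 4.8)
The plan is to establish each identity by the standard single-coset counting inside a double coset. Write $K=K_0(4)$. For any $g\in\GL_2(\Q_2)$ the double coset $KgK$ decomposes into $[K:K\cap gKg^{-1}]$ left cosets $\alpha_iK$ and into $[K:K\cap g^{-1}Kg]$ right cosets $K\beta_j$, and these two indices coincide (both equal $\mathrm{vol}(KgK)/\mathrm{vol}(K)$). So for each of (a)--(d) it suffices to: (i) compute this common index $m$; (ii) observe that the listed representatives really lie in $KgK$; and (iii) check that the listed representatives are pairwise inequivalent. Since the number of representatives listed in each case is exactly $m$, a set of $m$ distinct cosets all contained in a double coset possessing only $m$ cosets must be the whole decomposition, and the stated equality follows.

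Step (ii) is immediate: the auxiliary factors $x(s)$ and $y(4s)$ lie in $K_0(4)$ (they are unipotent with off-diagonal entry in $\Z_2$, resp.\ $4\Z_2$), so every listed representative has the form $kg$ or $gk$ with $k\in K_0(4)$. For step (i) I would conjugate a general $\smallmat{a}{b}{c}{d}\in K_0(4)$ by the relevant $g$ and read off the extra congruence needed to land back in $K_0(4)$. For example $d(2^n)^{-1}\smallmat{a}{b}{c}{d}d(2^n)=\smallmat{a}{2^{-n}b}{2^{n}c}{d}$, so $K\cap d(2^n)Kd(2^n)^{-1}$ is cut out by the single extra condition $b\in 2^n\Z_2$, of index $2^n$. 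The remaining three conjugations are of the same type and give: $c\in 2^{n+2}\Z_2$ (index $2^n$) for $d(2^{-n})$; $c\in 2^{n}\Z_2$ (index $2^{n-2}$) for $w(2^n)$, $n\ge 2$; and $b\in 2^{2-n}\Z_2$ (index $2^{2-n}$) for $w(2^n)$, $n\le 1$. Throughout one uses that every element of $K_0(4)$ has $a,d\in\Z_2^\times$ --- because $c\in 4\Z_2$ forces $ad=\det+bc\equiv\det\pmod 2$ with $\det\in\Z_2^\times$ --- which is also what guarantees the indices are exactly the claimed powers of $2$.

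For step (iii) two left representatives $\alpha,\alpha'$ agree iff $(\alpha')^{-1}\alpha\in K_0(4)$; e.g.\ $d(2^n)^{-1}x(s-s')d(2^n)=\smallmat{1}{2^{-n}(s-s')}{0}{1}\in K_0(4)$ precisely when $s\equiv s'\pmod{2^n}$, which both confirms well-definedness of $x(s)d(2^n)K_0(4)$ on $s\bmod 2^n$ and matches the index $2^n$. The analogous one-line computation disposes of the $y(4s)$ families for $d(2^{-n})$ and $w(2^n)$. For the right-coset statements I would proceed dually: for the two $w(2^n)$ cases I would use that $w(2^n)^2=z(-2^n)$ is central, so conjugation by $w(2^n)$ is an involution, forcing the right count to equal the left count and reducing the distinctness check to the same computation; for the $d$ cases the right decomposition in (a) is exactly the inverse of the left decomposition in (b), and conversely, since inverting a disjoint union $\bigsqcup_i \alpha_i K_0(4)$ produces $\bigsqcup_i K_0(4)\alpha_i^{-1}$ (with $d(2^{\pm n})^{-1}=d(2^{\mp n})$).

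Since every step is a $2\times 2$ matrix product over $\Q_2$, none of the individual computations is hard; the only place demanding care is the bookkeeping of $2$-adic valuations, so that the index, the distinctness modulus and the number of listed representatives all coincide. In particular I would keep the threshold $4\Z_2=2^2\Z_2$ in view so that the two $w(2^n)$ regimes $n\ge 2$ and $n\le 1$ emerge with the stated moduli $2^{n-2}$ and $2^{2-n}$ rather than $2^{n}$.
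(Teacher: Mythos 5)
Your proof is correct. The paper states this lemma without proof, treating it as a routine computation, and your argument --- counting the left and right cosets in $K_0(4)gK_0(4)$ via the index $[K_0(4):K_0(4)\cap gK_0(4)g^{-1}]$ (equal on both sides by unimodularity), exhibiting the matching number of pairwise distinct representatives with $x(s),y(4s)\in K_0(4)$, and transferring left decompositions to right ones using $d(2^n)^{-1}=d(2^{-n})$ and $w(2^n)^2=z(-2^n)$ central --- is exactly the standard verification the authors leave implicit.
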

Using the above lemma and since $y(2)$ normalizes $K_0(4)$ we can obtain 
decomposition of 
double cosets $K_0(4)gK_0(4)$ where $g$ varies over all the double coset 
representatives noted above.

Note that in this case Hecke algbera $H(\GL_2(\Q_2)//K_0(4))$ does not involve 
any character, so it is trivially supported on all the double cosets. 
Let $X_g$ be the characteristic function of $K_0(4)gK_0(4)$ and 
let $\mathcal{T}_n=X_{d(2^n)}$, $\mathcal{U}_n=X_{w(2^n)}$, 
$\mathcal{V}=X_{y(2)}$ and $\mathcal{Z}=X_{z(2)}$ be 
elements of the Hecke algebra $H(\GL_2(\Q_2)//K_0(4))$ 
(again note that there is a conflict of notation with the 
Hecke algebra elements of $\DSL_2(\Q_2)$ but we will see 
that the elements satisfy exactly the same relations). 
It is easy to see that $\mathcal{Z}$ is in the center and
that $\mathcal{Z}^n=X_{z(2^n)}$. 

Using \cite[Lemma 3.1]{B-P} and above decomposition 
we obtain the following relations in $H(\GL_2(\Q_2)//K_0(4))$.
\begin{lem}\label{lem:intHecke}
\begin{enumerate} 
 \item If $mn \ge 0$ then $\mathcal{T}_m*\mathcal{T}_n = \mathcal{T}_{m+n}$.
 \item For $n \le 0$, $\mathcal{U}_1 * \mathcal{T}_n = \mathcal{U}_{1+n}$ and 
 for $n \ge 0$, $\mathcal{T}_n * \mathcal{U}_1 = \mathcal{Z}^n\mathcal{U}_{1-n}$.
 \item For $n \ge 0$, $\mathcal{U}_2 * \mathcal{T}_n = \mathcal{U}_{2+n}$ and 
 for $n \le 0$, $\mathcal{T}_n * \mathcal{U}_2 = \mathcal{Z}^n\mathcal{U}_{2-n}$.
 \item For $m \ge 2$, $\mathcal{U}_1 * \mathcal{U}_m = \mathcal{Z}\mathcal{T}_{m-1}$ and
 $\mathcal{U}_m * \mathcal{U}_1 = \mathcal{Z}^m\mathcal{T}_{1-m}$.
 \item For $m \le 1$, $\mathcal{U}_2 * \mathcal{U}_m = \mathcal{Z}^2\mathcal{T}_{m-2}$ and
 $\mathcal{U}_m * \mathcal{U}_2 = \mathcal{Z}^m\mathcal{T}_{2-m}$.
\end{enumerate}
\end{lem}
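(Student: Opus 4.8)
The plan is to read each identity as a product of two double-coset indicator functions and to exploit the crucial structural difference from the metaplectic algebras $H(\ov{K_0^2(8)}, \chi_i)$: the algebra $H(\GL_2(\Q_2)//K_0(4))$ carries no genuine character, so it is supported on every double coset and every summand in the convolution formula contributes either $0$ or $1$. Concretely, by \cite[Lemma 3.1]{B-P}, if $X_b$ is the indicator of $K_0(4)bK_0(4)$ and $K_0(4)b^{-1}K_0(4)=\bigsqcup_j \tilde\beta_j K_0(4)$, then $(X_a * X_b)(g)=\sum_j X_a(g\tilde\beta_j)X_b(\tilde\beta_j^{-1})=\#\{j : g\tilde\beta_j\in K_0(4)aK_0(4)\}$. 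Each relation then splits into two independent checks: (i) identify the double coset on which the product is supported, and (ii) verify that the structure constant there equals $1$.

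For step (i), I would multiply the chosen representatives directly, tracking the determinant. The phenomenon absent in the $\SL_2$-covering computations is that anti-diagonal representatives carry a determinant that must be absorbed into a central factor. For example, $w(2)\,d(2^n)=w(2^{n+1})$ has no determinant defect (explaining $\mathcal{U}_1 * \mathcal{T}_n=\mathcal{U}_{1+n}$ for $n\le 0$), whereas $d(2^n)\,w(2)=z(2^n)\,w(2^{1-n})$ for $n\ge 0$ forces the factor $\mathcal{Z}^n$ in $\mathcal{T}_n*\mathcal{U}_1=\mathcal{Z}^n\mathcal{U}_{1-n}$; similarly $w(2)\,w(2^m)=-z(2)\,d(2^{m-1})$, and since $-I\in K_0(4)$ this lands in $K_0(4)\,z(2)d(2^{m-1})\,K_0(4)$, producing the $\mathcal{Z}$ in relation (4). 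Because $z(t)$ is central, $K_0(4)\,z(2^k)g\,K_0(4)=z(2^k)\,K_0(4)gK_0(4)$, so these central factors pass through cleanly and $\mathcal{Z}^k X_g=X_{z(2^k)g}$; reading off the determinant of each product matrix thus pins down the correct power of $\mathcal{Z}$, and the residual diagonal/anti-diagonal shape identifies the $\mathcal{T}$ or $\mathcal{U}$ index.

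For step (ii), I would first show the product is supported on a single double coset and then fix its coefficient by conservation of mass. The support is located exactly as in the metaplectic computations: $(X_a*X_b)(g)$ vanishes unless $g$ lies in $K_0(4)aK_0(4)\cdot K_0(4)bK_0(4)$, and running $g$ over the finite list of representatives classified above, only the representative $c$ found in step (i) survives the constraint $g\tilde\beta_j\in K_0(4)aK_0(4)$ — the same elimination carried out for $\mathcal{U}_2*\mathcal{V}*\mathcal{U}_2$ in Proposition~\ref{prop:rel5}. Once the support is the single coset $c$, conservation of total measure gives $\mu(a)\mu(b)=c_{ab}^{c}\,\mu(c)$, so the coefficient is $c_{ab}^{c}=\mu(a)\mu(b)/\mu(c)$. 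Here I read the indices off the decomposition lemma above, namely $\mu(d(2^n))=2^{|n|}$, $\mu(w(2^n))=2^{n-2}$ for $n\ge 2$ and $2^{2-n}$ for $n\le 1$, with $\mu$ unchanged by the central $\mathcal{Z}^k$. In each listed relation one then checks a single numerical identity $\mu(a)\mu(b)=\mu(c)$ — e.g. $2^m\cdot 2^n=2^{m+n}$ in (1), and $\mu(w(2))\mu(w(2^m))=2\cdot 2^{m-2}=2^{m-1}=\mu(d(2^{m-1}))$ in (4) — which yields coefficient $1$.

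The main obstacle I anticipate is organizational rather than conceptual: reliably keeping track of the central power $\mathcal{Z}^k$ across the mixed products in relations (2)--(5), where the power of $2$ migrates between the two diagonal entries and must be reconciled with the listed representatives, using the centrality of $z(t)$ together with the facts that $-I\in K_0(4)$ and that $y(2)$ normalizes $K_0(4)$. A secondary point needing care is confirming that the structure constant cannot exceed $1$; this is guaranteed by the index identity of step (ii), but it must be verified in each of the finitely many cases, since an index mismatch would signal additional supporting double cosets that are in fact absent here.
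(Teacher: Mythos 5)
Your proposal is correct and matches the paper's (very terse) argument: the paper likewise derives these relations from the single-coset decompositions of Lemma 5.3 together with the convolution-support lemma \cite[Lemma 3.1]{B-P}, identifying the product double coset by multiplying representatives (with the determinant absorbed into a central $z(2^k)$) and pinning the coefficient to $1$ via the index identity $\mu(a)\mu(b)=\mu(c)$. Your explicit matrix computations ($w(2)d(2^n)=w(2^{n+1})$, $d(2^n)w(2)=z(2^n)w(2^{1-n})$, $w(2)w(2^m)=-z(2)d(2^{m-1})$, etc.) and coset counts all check out against the stated relations.
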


We have the following proposition.
\begin{prop}\label{prop:intHecke}
\begin{enumerate}
\item $\mathcal{V} * \mathcal{V} =1$.
\item $\mathcal{U}_1 * \mathcal{U}_1 = 2\mathcal{Z}(1 + \mathcal{V})$.
\item $\mathcal{U}_1 * \mathcal{V} = \mathcal{U}_1 = \mathcal{V} * \mathcal{U}_1$.
\item $\mathcal{U}_2 * \mathcal{U}_2 = \mathcal{Z}^2$.
\item $\mathcal{U}_2 * \mathcal{V} * \mathcal{U}_2 = \mathcal{Z}\mathcal{V} * \mathcal{U}_2 * \mathcal{V}$.
\item $\mathcal{U}_0 * \mathcal{U}_0 = 4 + 2\ \mathcal{U}_0 + 4\mathcal{V}$.
\item $\mathcal{U}_0 * \mathcal{V} = \mathcal{U}_0 = \mathcal{V} * \mathcal{U}_0$.
\end{enumerate}
\end{prop}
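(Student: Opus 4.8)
The plan is to follow the proofs of Proposition~\ref{prop:rel5} and Lemma~\ref{lem:rel4} almost verbatim, exploiting two simplifications of the $\GL_2$ setting. Since $H(\GL_2(\Q_2)//K_0(4))$ carries no genuine character, every double coset supports its indicator function, so there is no vanishing to track, and the absence of the metaplectic cocycle $\sigma_2$ removes all sign bookkeeping; the one genuinely new feature is the central element $\mathcal{Z}=X_{z(2)}$, which merely records determinants. This gives a cheap sanity check: $\det$ is multiplicative on supports, so every claimed identity must balance determinants. For instance in $(5)$ the support of $\mathcal{U}_2 * \mathcal{V} * \mathcal{U}_2$ has $\det = 4\cdot 1\cdot 4 = 16$ while $\mathcal{V}*\mathcal{U}_2*\mathcal{V}$ has $\det = 4$, so the missing factor is forced to be $\mathcal{Z}$ (of determinant $4$). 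This already predicts the shape of each relation, and the work is only to pin down the scalar coefficients.

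For the easy relations I would use that $y(2)$ normalizes $K_0(4)$, so $\mathcal{V}=X_{y(2)}$ is supported on the single coset $y(2)K_0(4)=K_0(4)y(2)$ and acts by translation. Then $(1)$ is immediate, since $\mathcal{V}*\mathcal{V}$ sits on $y(2)^2K_0(4)=y(4)K_0(4)=K_0(4)$ with value $1$, as $y(4)\in K_0(4)$. For $(3)$, right translation by $y(2)$ carries the support of $\mathcal{U}_1$ to $K_0(4)w(2)y(2)K_0(4)$; exhibiting an explicit factorization $w(2)y(2)=A\,w(2)\,B$ with $A,B\in K_0(4)$ (the $\GL_2$ analogue of $\ov{w}(2^{-1})\ov{y}(4)=(\smallmat{9}{-1}{-8}{1},1)\ov{w}(2^{-1})(\smallmat{1}{-2}{0}{1},1)$ from the $\chi_1$ case) shows this double coset equals that of $w(2)$, and matching single values gives $\mathcal{U}_1*\mathcal{V}=\mathcal{U}_1=\mathcal{V}*\mathcal{U}_1$. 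For $(2)$ and $(4)$ I would read off coset counts from the $K_0(4)$-decompositions above and apply \cite[Lemma 3.1]{B-P}. By part (c) with $n=2$ the double coset of $w(4)$ is a single left coset, so $\mathcal{U}_2*\mathcal{U}_2$ is supported on the single double coset of $w(4)^2=-4I$; as $-I\in K_0(4)$ this is the double coset of $z(4)$, giving $\mathcal{U}_2*\mathcal{U}_2=\mathcal{Z}^2$. By part (d) with $n=1$ the double coset of $w(2)$ splits into two left cosets $x(s)w(2)K_0(4)$, $s\in\Z_2/2\Z_2$; a direct count of $\#\{s:\alpha_s^{-1}g\in K_0(4)w(2)K_0(4)\}$ shows $\mathcal{U}_1*\mathcal{U}_1$ is supported on the double cosets of $w(2)^2=-2I$ and of $z(2)y(2)$, each with coefficient $2$, i.e. $\mathcal{U}_1*\mathcal{U}_1=2\mathcal{Z}+2\mathcal{Z}\mathcal{V}=2\mathcal{Z}(1+\mathcal{V})$.

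The main obstacle is $(5)$, which I would prove exactly as Proposition~\ref{prop:rel5}$(3)$. Set $\mathcal{W}_2=\mathcal{U}_2*\mathcal{V}$ and $\mathcal{Z}_2=\mathcal{V}*\mathcal{U}_2*\mathcal{V}$; the decompositions and \cite[Lemma 3.1]{B-P} show these are supported on the double cosets of $w(4)y(2)$ and of $y(2)w(4)y(2)$ respectively. I would then write out single-coset representatives $\tilde\alpha_s$ for the support of $\mathcal{W}_2$ and representatives $\tilde\beta_t$ of $K_0(4)w(4)^{-1}K_0(4)$, form the matrices $\tilde\beta_t\tilde\alpha_s^{-1}$, and run $g$ over the double-coset list to conclude that $\mathcal{W}_2*\mathcal{U}_2$ is supported precisely on the double coset of $y(2)w(4)y(2)$. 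Evaluating $\mathcal{W}_2*\mathcal{U}_2$ at $z(2)y(2)w(4)y(2)$ through the convolution formula (the $\GL_2$ analogue of Lemma~\ref{lem:rel1'}) then pins the coefficient to $1$, so $\mathcal{U}_2*\mathcal{V}*\mathcal{U}_2=\mathcal{Z}\,\mathcal{Z}_2=\mathcal{Z}\,\mathcal{V}*\mathcal{U}_2*\mathcal{V}$. Because $\sigma_2$ and $\gamma$ have disappeared, this is a pure matrix computation modulo $K_0(4)$ and should be markedly shorter than its metaplectic counterpart; the only delicate point is keeping the central factor $\mathcal{Z}$ straight through the determinant mismatch.

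Finally, $(6)$ and $(7)$ are purely algebraic consequences of the above together with Lemma~\ref{lem:intHecke}. From Lemma~\ref{lem:intHecke}$(4)$ with $m=2$ and $(2)$ with $n=1$ I get $\mathcal{U}_1*\mathcal{U}_2=\mathcal{Z}\mathcal{T}_1$ and $\mathcal{T}_1*\mathcal{U}_1=\mathcal{Z}\mathcal{U}_0$, hence $\mathcal{U}_1*\mathcal{U}_2*\mathcal{U}_1=\mathcal{Z}^2\mathcal{U}_0$, i.e. $\mathcal{U}_0=\mathcal{Z}^{-2}\mathcal{U}_1*\mathcal{U}_2*\mathcal{U}_1$. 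Then $(7)$ is immediate from $(3)$ since $\mathcal{U}_1*\mathcal{V}=\mathcal{U}_1=\mathcal{V}*\mathcal{U}_1$, and $(6)$ follows by substituting this expression into $\mathcal{U}_0*\mathcal{U}_0$ and reducing with $(2)$, $(4)$, $(5)$, $(3)$ in turn, which telescopes (the powers of $\mathcal{Z}$ cancelling exactly) to $4+2\mathcal{U}_0+4\mathcal{V}$.
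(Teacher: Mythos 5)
Your proposal is correct, and for relations $(2)$--$(5)$ it follows essentially the same route as the paper: the paper also obtains $(3)$, $(4)$, $(5)$ from the coset decompositions together with the support lemma, and proves $(2)$ by exactly the computation you describe, taking $\alpha_s = x(s)w(2)$ and checking $(\alpha_s\alpha_t)^{-1}g \in K_0(4)$ over the double coset representatives to find support $\{z(2),\, y(2)z(2)\}$ with value $2$ at each. The one genuine divergence is in $(1)$, $(6)$, $(7)$: the paper observes that $\mathcal{V}$ and $\mathcal{U}_0$ lie in the subalgebra supported on $\GL_2(\Z_2)$ and simply cites \cite[Propositions 3.10, 3.12]{B-P} for these three relations, whereas you prove $(1)$ directly from $y(2)^2 = y(4) \in K_0(4)$ and derive $(6)$, $(7)$ algebraically from $(2)$--$(5)$ and Lemma~\ref{lem:intHecke} via $\mathcal{U}_1 * \mathcal{U}_2 * \mathcal{U}_1 = \mathcal{Z}^2\mathcal{U}_0$. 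Your route is self-contained and mirrors how the paper itself handles the analogous metaplectic relations in Proposition~\ref{prop:rel5}(4),(5), at the cost of redoing work already available in \cite{B-P}; the paper's citation is shorter but leans on the external reference. Your determinant bookkeeping for predicting the power of $\mathcal{Z}$ in each relation is a useful sanity check not made explicit in the paper, and your verification of $(6)$ by substitution does telescope as claimed.
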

\begin{proof}
Note that $\mathcal{V},\  \mathcal{U}_0$ are elements of the subalgebra 
supported on $\GL_2(\Z_p)$ and the relations $(1), (6), (7)$ directly follow from 
\cite[Proposition 3.10, 3.12]{B-P}. The relation $(3)$, $(4)$ and the 
braid relation $(5)$ follows easily as the above lemma.
For relation $(2)$ we use \cite[Lemma 3.2]{B-P}. 
For $s=0,1$, let $\alpha_s = x(s)w(2)$. Then
$\mathcal{U}_1*\mathcal{U}_1$ is supported on those 
$g \in \GL_2(\Q_2)$ for which there exists $s,\ t \in \{0,1\}$
such that
\[(\alpha_s\alpha_t)^{-1}g = \mat{-1/2}{s/2}{-t}{st-1/2}g \in K_0(4).\]
Checking this for $g$ as it varies over all the double coset representatives 
we get that the support is precisely on $z(2)$ and $y(2)z(2)$. Further we get 
that
\begin{equation*}
\begin{split}
\mathcal{U}_1*\mathcal{U}_1(y(2)z(2)) &=
 \sum_{s=0,1}\mathcal{U}_1(\alpha_s)\mathcal{U}_1(\alpha_s^{-1}y(2)z(2))\\
 &=
 \mathcal{U}_1(\alpha_1\mat{-1}{0}{-4}{-1}) + 
 \mathcal{U}_1(\alpha_1\mat{1}{1}{0}{1}) = 2.
\end{split}
\end{equation*}
Similarly $\mathcal{U}_1*\mathcal{U}_1(z(2))=2$. Thus we obtain $(2)$.
\end{proof}

The remaining basis elements of $H(\GL_2(\Q_2)//K_0(4))$ are 
precisely $\mathcal{T}_n* \mathcal{V}$, $\mathcal{V}*\mathcal{T}_{-n}$
for $n \ge 1$, and $\mathcal{U}_n* \mathcal{V}$, $\mathcal{V}*\mathcal{U}_{n}$
and $\mathcal{V}*\mathcal{U}_{n}*\mathcal{V}$ for $n \ge 2$.

We have the following theorem. 
\begin{thm}
The Hecke algebra $H(\GL_2(\Q_2)//K_0(4))/\langle \mathcal{Z} \rangle$ 
is generated by $\mathcal{U}_1, \ \mathcal{U}_2,\ \mathcal{V}$ with the 
defining relations:
\begin{enumerate}
 \item $\mathcal{U}_1^2 = 2(1 + \mathcal{V})$.
 \item $\mathcal{U}_2^2 = 1$.
 \item $\mathcal{U}_1 \mathcal{V} =\mathcal{V} \mathcal{U}_1=\mathcal{U}_1$.
 \item $\mathcal{U}_2\mathcal{V}\mathcal{U}_2= 
 \mathcal{V}\mathcal{U}_2\mathcal{V}$.
 \end{enumerate}
\end{thm}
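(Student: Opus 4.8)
The plan is to establish the presentation in three stages: (i) that relations (1)--(4) hold in the quotient, (ii) that $\mathcal{U}_1,\mathcal{U}_2,\mathcal{V}$ generate it, and (iii) that these relations are complete. First a word on the quotient: since $\mathcal{Z}=X_{z(2)}$ is a unit (with inverse $X_{z(1/2)}$), the symbol $\langle\mathcal{Z}\rangle$ cannot denote the ideal generated by $\mathcal{Z}$; I read it, consistently with ``modulo scalars,'' as the specialization $\mathcal{Z}\mapsto 1$. This is the passage from $\GL_2$ to $\PGL_2$: the image of $z(2)$ is central-trivial, so $K_0(4)z(2)K_0(4)=K_0(4)$ and hence $\mathcal{Z}=1$. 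Under this specialization the four relations are immediate from Proposition~\ref{prop:intHecke}: relation (1) is item (2) of that proposition with $\mathcal{Z}=1$, relation (2) is item (4) (as $\mathcal{U}_2*\mathcal{U}_2=\mathcal{Z}^2=1$), relation (3) is item (3), and relation (4) is the braid relation item (5) with the factor $\mathcal{Z}$ set to $1$. Consequently there is a well-defined algebra homomorphism $\Phi$ from the abstract $\C$-algebra $\mathcal{A}$ presented by $\mathcal{U}_1,\mathcal{U}_2,\mathcal{V}$ and relations (1)--(4) to $H(\GL_2(\Q_2)//K_0(4))/\langle\mathcal{Z}\rangle$.

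For surjectivity I would use the basis described just before the theorem: a $\C$-basis of the quotient is given by $\mathcal{T}_n,\mathcal{U}_n$ for $n\in\Z$, by $\mathcal{V}$, by $\mathcal{T}_n*\mathcal{V}$ and $\mathcal{V}*\mathcal{T}_{-n}$ for $n\ge 1$, and by $\mathcal{U}_n*\mathcal{V}$, $\mathcal{V}*\mathcal{U}_n$, $\mathcal{V}*\mathcal{U}_n*\mathcal{V}$ for $n\ge 2$; these are the characteristic functions of the double cosets of the listed representatives modulo central elements, hence linearly independent. Specializing Lemma~\ref{lem:intHecke} to $\mathcal{Z}=1$ writes each of these as a word in the three generators: $\mathcal{T}_1=\mathcal{U}_1*\mathcal{U}_2$ and $\mathcal{T}_{-1}=\mathcal{U}_2*\mathcal{U}_1$ by item (4) with $m=2$, whence $\mathcal{T}_n=\mathcal{T}_1^{\,n}$ and $\mathcal{T}_{-n}=\mathcal{T}_{-1}^{\,n}$ by item (1); then $\mathcal{U}_n=\mathcal{U}_2*\mathcal{T}_{n-2}$ for $n\ge 2$ by item (3) and $\mathcal{U}_n=\mathcal{U}_1*\mathcal{T}_{n-1}$ for $n\le 1$ by item (2); the remaining basis vectors are the displayed products with $\mathcal{V}$. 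Thus $\Phi$ is surjective.

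The main obstacle is completeness, i.e.\ injectivity of $\Phi$. Here I would exhibit inside $\mathcal{A}$ a spanning set $B$ indexed by exactly the combinatorial set above, with abstract symbols $\mathcal{T}_n,\mathcal{U}_n,\mathcal{V}$ defined by the formulas of the previous paragraph and the stated products with $\mathcal{V}$, chosen so that $\Phi$ carries $B$ onto the double-coset basis. The crux is to prove that $\operatorname{span}(B)$ is closed under left multiplication by $\mathcal{U}_1,\mathcal{U}_2,\mathcal{V}$ using \emph{only} relations (1)--(4); since $1\in B$ and $B$ contains the generators, closure forces $\operatorname{span}(B)=\mathcal{A}$. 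Then a relation $\sum c_i b_i=0$ in $\mathcal{A}$ maps under $\Phi$ to a relation among linearly independent basis vectors, forcing all $c_i=0$; so $B$ is a basis, $\dim\mathcal{A}=\dim\bigl(H/\langle\mathcal{Z}\rangle\bigr)$, and $\Phi$ is an isomorphism. The rewriting required for closure is precisely the multiplication table encoded in Lemma~\ref{lem:intHecke} and Proposition~\ref{prop:intHecke}, now re-derived \emph{abstractly} from (1)--(4); verifying that no word collapses further than the double-coset basis predicts (the confluence check) is the only genuine work.

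The cleanest way to organize this last step, and the one I would adopt, is to observe that the substitution $\mathcal{U}_1\mapsto\sqrt{2}\,\widehat{\mathcal{U}_1}$, $\mathcal{U}_2\mapsto\widehat{\mathcal{U}_2}$, $\mathcal{V}\mapsto\mathcal{V}$ carries relations (1)--(4) verbatim onto the defining relations of $H(\ov{K_0^2(8)},\chi_1)$ in Theorem~\ref{thm:genrelchi_1} (indeed $\widehat{\mathcal{U}_1}^2=\tfrac12\mathcal{U}_1^2=1+\mathcal{V}$ and $\widehat{\mathcal{U}_2}^2=1$). Thus $\mathcal{A}$ is isomorphic as an abstract algebra to the one presented there, and the spanning/normal-form analysis already carried out for $\chi_1$ applies after this rescaling, so the confluence check need not be repeated. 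This simultaneously realizes the asserted local Shimura correspondence between the level-$8$ double-cover algebra and the level-$4$ $\PGL_2$ algebra.
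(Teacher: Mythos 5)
Your proposal is correct and follows essentially the route the paper intends: the paper states this theorem without a written proof, the relations being exactly Proposition~\ref{prop:intHecke} read modulo $\mathcal{Z}$, generation coming from Lemma~\ref{lem:intHecke} together with the listed double-coset basis, and the identification with the presentation of $H(\ov{K_0^2(8)},\chi_1)$ being precisely the content of the Corollary that immediately follows. One honest caveat: your final step defers the completeness (confluence) check to Theorem~\ref{thm:genrelchi_1}, which the paper likewise states without carrying out that verification, so the work is relocated rather than eliminated --- but as a matter of logical dependence within the paper this is legitimate, and your explicit normal-form strategy in the third paragraph is the correct way to discharge it for either presentation.
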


\begin{cor}
We have the following isomorphism of Hecke algebras:
\[H(\ov{K_0^2(8)}, \chi_1)\cong H(\ov{K_0^2(8)}, \chi_2)
 \cong H(\GL_2(\Q_2)//K_0(4))/\langle \mathcal{Z}\rangle.
\]
\end{cor}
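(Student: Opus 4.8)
The plan is to read off all three isomorphisms directly from the generator-and-relation presentations established in Theorem~\ref{thm:genrelchi_1}, Theorem~\ref{thm:genrelchi_2}, and the theorem presenting $H(\GL_2(\Q_2)//K_0(4))/\langle\mathcal{Z}\rangle$. Since each of those results asserts a \emph{complete} set of defining relations among three explicit generators, a unital algebra homomorphism out of any one of these algebras is determined freely by the images of its three generators, subject only to the requirement that the listed defining relations hold for those images. Thus each isomorphism will be produced by prescribing where the generators go, checking that the four defining relations of the source are satisfied in the target, and then writing down an inverse assignment in the same manner; working over $\C$, where $\sqrt{2}$ is invertible, causes no difficulty.

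First I would treat $H(\ov{K_0^2(8)},\chi_1)\cong H(\GL_2(\Q_2)//K_0(4))/\langle\mathcal{Z}\rangle$. I send $\mathcal{V}\mapsto\mathcal{V}$, $\widehat{\mathcal{U}_2}\mapsto\mathcal{U}_2$ and $\widehat{\mathcal{U}_1}\mapsto\tfrac{1}{\sqrt{2}}\,\mathcal{U}_1$. Relations (2), (3), (4) of Theorem~\ref{thm:genrelchi_1} then map verbatim onto relations (2), (3), (4) of the integral-weight presentation, while relation (1), namely $\widehat{\mathcal{U}_1}^2=1+\mathcal{V}$, becomes $\tfrac12\mathcal{U}_1^2=1+\mathcal{V}$, i.e.\ $\mathcal{U}_1^2=2(1+\mathcal{V})$, which is exactly relation (1) there. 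The inverse assignment $\mathcal{U}_1\mapsto\sqrt{2}\,\widehat{\mathcal{U}_1}$, $\mathcal{U}_2\mapsto\widehat{\mathcal{U}_2}$, $\mathcal{V}\mapsto\mathcal{V}$ is verified the same way, so the two maps are mutually inverse isomorphisms.

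For the remaining isomorphism $H(\ov{K_0^2(8)},\chi_1)\cong H(\ov{K_0^2(8)},\chi_2)$, I would observe that the presentation in Theorem~\ref{thm:genrelchi_2} is obtained from that in Theorem~\ref{thm:genrelchi_1} simply by interchanging the two generators $\widehat{\mathcal{U}_1}$ and $\widehat{\mathcal{U}_2}$ and renaming $\mathcal{V}$ as $\mathcal{Z}_1'$: under this swap relation (1) of Theorem~\ref{thm:genrelchi_1} becomes relation (2) of Theorem~\ref{thm:genrelchi_2}, relation (2) becomes relation (1), the commuting relation (3) becomes relation (3), and the braid relation (4) becomes relation (4). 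Hence the assignment $\widehat{\mathcal{U}_1}\mapsto\widehat{\mathcal{U}_2}$, $\widehat{\mathcal{U}_2}\mapsto\widehat{\mathcal{U}_1}$, $\mathcal{V}\mapsto\mathcal{Z}_1'$ is an isomorphism, and composing it with the map of the previous paragraph yields the full chain. The only point that genuinely requires justification is that each listed family of relations is defining (complete), and this is precisely the content of the three cited theorems rather than of the corollary; granting it, there is no real obstacle here beyond careful bookkeeping of the substitutions and the single rescaling by $\sqrt{2}$.
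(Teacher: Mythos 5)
Your proposal is correct and follows exactly the route the paper intends: the corollary is stated without proof precisely because it is read off by matching the three presentations, and your generator assignments ($\widehat{\mathcal{U}_1}\mapsto\tfrac{1}{\sqrt{2}}\mathcal{U}_1$ with the other generators matched directly, and the swap $\widehat{\mathcal{U}_1}\leftrightarrow\widehat{\mathcal{U}_2}$, $\mathcal{V}\mapsto\mathcal{Z}_1'$ between the two metaplectic algebras) carry each list of defining relations bijectively onto the other. Your explicit check of all four relations in each direction, together with the remark that completeness of the relations is the content of the cited theorems, is exactly the bookkeeping required.
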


The Hecke algebra generators and relations described above allow a 
study of the representation theory of the maximal compact with 
$(\ov{K_0^2(8)},\gamma)$ equivariant
vectors and also the infinite dimensional genuine representations of 
$\DSL(2)$ with such vectors. We will pursue this study in 
a subsequent work.

\section{Translation of adelic to classical.}

We follow the notation as in Section 4 of \cite{B-PII}.
Let $k$ be a natural number, $M$ be odd and 
$\chi$ be an even Dirichlet character modulo $8M$.
Let $\chi_0 = \chi\kro{-1}{\cdot}^k$. 
We consider the central character $\gamma$ of $M_2$ such that 
$\gamma((-I,1)) = -i^{2k+1}$ and $\chi_1,\ \chi_2$ be the extension of $\gamma$ as 
in the previous section.

Let $A_{k+1/2}(8M, \chi_0)$ be the set of adelic cuspidal automorphic forms 
$\Phi:\DSL_2(\A)\rightarrow \C$ satisfying certain properties as 
considered by Waldspurger~\cite{Waldspurger}. 
By Gelbart-Waldspurger there is an isomorphism between
$A_{k+1/2}(8M, \chi_0) \rightarrow S_{k+1/2}(\Gamma_0(8M), \chi)$, 
$\Phi_f\leftrightarrow f$,
inducing a ring isomorphism
\[q:\mathrm{End}_{\C}(A_{k+1/2}(8M, \chi_0))\rightarrow 
\mathrm{End}_{\C}(S_{k+1/2}(\Gamma_0(8M), \chi).\] 
We will use $q$ to translate 
certain elements in $H(\ov{K_0^2(8)}, \chi_1)$ and $H(\ov{K_0^2(8)}, \chi_2)$ 
to classical operators on $S_{k+1/2}(\Gamma_0(8M))$ and 
$S_{k+1/2}(\Gamma_0(8M), \kro{2}{\cdot})$ respectively. 
Thus the classical operators so obtained satisfies the local Hecke 
algebra relations noted in the previous section. These relations 
are crucial for the results obtained in the next section. 

\begin{prop}\label{prop:trans1}
Let $\mathcal{T}_1,\ \mathcal{U}_1,\ \mathcal{U}_2,\ \mathcal{V} 
\in H(\ov{K_0^2(8)}, \chi_1)$  and 
$f\in S_{k+1/2}(\Gamma_0(8M))$.
\begin{enumerate}
\item $q(\mathcal{T}_1)(f)(z) = 2^{-(2k+1)/2} \sum_{s=0}^3 f((z+s)/4)=2^{(3-2k)/2}U_4(f)(z).$
\item $q(\mathcal{U}_1)(f)(z) = \ov{\zeta_8}\kro{-1}{M}^{k+3/2}\kro{2}{M}
2f|[W_4,\phi_{W_4}(z)]_{k+1/2}(z)$ where 
$W_4=\mat{4n}{m}{4M}{8}$ with $n,m\in \Z$ such that $8n-mM=1$ and 
 $\phi_{W_4}(z)=(2Mz+4)^{1/2}$.
\item $q(\mathcal{U}_2)(f)(z) = \ov{\zeta_8}\kro{-1}{M}^{k+3/2} 
\sum_{s=0}^{1}f|[W_8,\phi_{W_8}(z)]_{k+1/2}(z)$ where 
$W_8=\mat{16n-8mMs}{m}{16M-128Ms}{16}$ with $n,m\in \Z$ such that $16n-mM=1$ and 
 $\phi_{W_8}(z)=((4M-32Ms)z+4)^{1/2}$.
\item $q(\mathcal{V})(f)(z) =  f|[\mat{1}{0}{4M}{1},(4Mz+1)^{1/2}]_{k+1/2}(z)$.
\end{enumerate}
\end{prop}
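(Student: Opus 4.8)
The plan is to translate each local Hecke operator $\mathcal{T}_1,\ \mathcal{U}_1,\ \mathcal{U}_2,\ \mathcal{V}\in H(\ov{K_0^2(8)},\chi_1)$ into its classical counterpart by following the Gelbart--Waldspurger dictionary exactly as set up in Section~4 of \cite{B-PII}. The core tool is the ring isomorphism $q$, under which a local Hecke element $f$ supported on a single double coset $\ov{K}_0\ov{g}\ov{K}_0$ corresponds to a classical slash operator built from the $\GL_2^+(\R)$-part of an adelic element whose finite components at $2$ match $g$ (and are trivial at all other finite places). Thus for each of the four generators I would first take the explicit double-coset representative from Lemma~\ref{lem:sup1} and Proposition~\ref{prop:sup1} --- namely $h(2)$ for $\mathcal{T}_1$, $w(2^{-1})$ for $\mathcal{U}_1$, $w(2^{-2})$ for $\mathcal{U}_2$, and $y(4)$ for $\mathcal{V}$ --- and then pass through a strong-approximation argument to produce an explicit $\Gamma_0(8M)$-coset decomposition, exactly as in the level-$4M$ computations of \cite{B-PII}.

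Concretely, I would carry out the following steps in order. First, for $\mathcal{T}_1$: using the single-coset decomposition $K_0h(2)K_0=\bigcup_{s\in\Z_2/4\Z_2}x(s)h(2)K_0$ from Lemma~\ref{lem:decomp1}(a), I would match these four $2$-adic cosets with the four classical cosets appearing in $U_4$, track the automorphy factor $\phi(z)^{-2k-1}$ coming from the slash operator $|[\zeta]_{k+1/2}$, and collect the normalizing power of $2$; the factor $2^{-(2k+1)/2}$ is precisely the contribution of $\det(\alpha)^{-1/2}$ evaluated on the scaling matrix $h(2)$. Second, for $\mathcal{V}$, which is supported on $\ov{K}_0\ov{y}(4)\ov{K}_0=\ov{K}_0\ov{y}(4)$ (a single coset, Lemma~\ref{lem:decomp1}(f)), the translation is the cleanest: I would write $y(4)$ adelically, clear denominators to land in $\Gamma_0(8M)$ times a lower-triangular matrix congruent to $\smallmat{1}{0}{4M}{1}$, and read off the slash operator with $\phi(z)=(4Mz+1)^{1/2}$. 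Third, for $\mathcal{U}_1$ and $\mathcal{U}_2$, which involve the Weyl elements $w(2^{-1}),\ w(2^{-2})$, I would solve the congruences $8n-mM=1$ and $16n-mM=1$ to realize the anti-diagonal $2$-adic element as an integral matrix $W_4$ (resp.\ $W_8$) supported at the prime $2$; the constants $\ov{\zeta_8}\kro{-1}{M}^{k+3/2}\kro{2}{M}$ then arise from the genuine character values $\gamma(\ov{w}(1))=\zeta_8$ together with the Weil-index/theta-multiplier bookkeeping that converts the $2$-adic metaplectic cocycle into Shimura's $j(\alpha,z)=\chi(d)\varepsilon_d^{-1}\kro{c}{d}(cz+d)^{1/2}$ automorphy factor.

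The hard part will be the careful bookkeeping of the metaplectic normalizations --- specifically, tracking how the genuine character $\chi_1$, the value $\gamma((-I,1))=-i^{2k+1}$, and the $2$-cocycle $\sigma_2$ conspire to produce the explicit root-of-unity and Kronecker-symbol prefactors in parts $(2)$ and $(3)$. Because $\DSL_2(\Q_2)$ does not split over $\SL_2(\Z_2)$, the splitting must be handled via $K_1^2(4)$, and each anti-diagonal representative picks up a Weil index $s_2(w(2^{-n}))$ and a factor $\varepsilon_d^{-1}\kro{c}{d}$ whose product must be matched against $\ov{\zeta_8}$ and the quadratic symbols at $M$; for the $\mathcal{U}_2$ case the sum over $s=0,1$ in the decomposition $K_0w(2^{-2})K_0=\bigcup_{s\in\Z_2/2\Z_2}\cdots$ (Lemma~\ref{lem:decomp1}(c)) must be carried through the slash operator to yield the stated two-term sum. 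Everything here is parallel to the corresponding level-$4M$ statements already proved in \cite{B-PII}; the present computation differs only in that the level is $8M$ and one must work modulo $\ov{K_0^2(8)}$ rather than $\ov{K_0^2(4)}$, so once the representatives and character values are pinned down the verification is a (lengthy but) routine matching of automorphy factors, and I would suppress the repetitive details by citing the analogous argument of \cite{B-PII}.
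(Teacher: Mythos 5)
Your proposal follows essentially the same route as the paper, which itself simply defers to "similar calculations as in \cite{B-PII}": you correctly identify the double-coset representatives $h(2)$, $w(2^{-1})$, $w(2^{-2})$, $y(4)$, the relevant single-coset decompositions from Lemma~\ref{lem:decomp1} (whose coset counts match the four-term, two-term, two-term, and one-term structures of the four formulas), and the origin of the prefactors in the genuine character and cocycle bookkeeping. This is a faithful, more explicit rendering of the argument the paper intends.
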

\begin{proof}
The proof follows by similar calculations as in \cite{B-PII}. 
\end{proof}

We similarly have the following proposition. 
\begin{prop}\label{prop:trans2} Let $f\in S_{k+1/2}(\Gamma_0(8M),\kro{2}{\cdot})$.
Let $W_4,\ W_8,$ be as in the above proposition.
For $\mathcal{T}_1,\ \mathcal{U}_1,\ \mathcal{U}_2,\ 
\mathcal{Z}_1',\ \mathcal{V}' \in H(\ov{K_0^2(8)}, \chi_2)$  we have
\begin{enumerate}
\item $q(\mathcal{T}_1)(f)(z) = 2^{-(2k+1)/2} \sum_{s=0}^3 f((z+s)/4)$.
\item $q(\mathcal{U}_1)(f)(z) = \ov{\zeta_8}\kro{-1}{M}^{k+3/2}
2f|[W_4,\phi_{W_4}(z)]_{k+1/2}(z)$. 
\item $q(\mathcal{U}_2)(f)(z) = \ov{\zeta_8}\kro{-1}{M}^{k+3/2}\kro{2}{M} 
\sum_{s=0}^{1}f|[W_8,\phi_{W_8}(z)]_{k+1/2}(z)$.
\item $q(\mathcal{Z}_1')(f)(z) = f(z-\frac{1}{2})$.
\end{enumerate}
\end{prop}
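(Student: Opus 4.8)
The plan is to run, for each of the four generators, the adelic-to-classical translation recipe of \cite{B-PII} exactly as was done for $\chi_1$ in Proposition~\ref{prop:trans1}, and to isolate the few places where passing from $\chi_1$ to $\chi_2$ changes the outcome. Recall that, since $q$ is a ring isomorphism, it sends an element of $H(\ov{K_0^2(8)},\chi_2)$ supported on a single double coset $\ov{K}_0\tilde{g}\ov{K}_0 = \bigsqcup_i \ov{K}_0\tilde{\alpha}_i$ to an operator $f \mapsto \sum_i c_i\, f|[\beta_i,\phi_i(z)]_{k+1/2}$, where each $\beta_i \in \GL_2(\Q)^+$ is a global matrix obtained from $\tilde{\alpha}_i$ by strong approximation and $c_i$ is a constant built from the archimedean metaplectic splitting together with the value of $\chi_2$ on the $\ov{K}_0$-part of $\tilde{\alpha}_i$. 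First I would record the single-coset decompositions of Lemma~\ref{lem:decomp1}: part~(a) with $n=1$ for $\mathcal{T}_1$ (four cosets $x(s)h(2)$, $s \in \Z_2/4\Z_2$), part~(d) for $\mathcal{U}_1$ (two cosets $x(s)w(2^{-1})$), and part~(c) with $n=2$ for $\mathcal{U}_2$ (two cosets $y(8s)w(2^{-2})$); for $\mathcal{Z}_1'$ the support is the single coset of $\ov{x}(1/2)=\ov{y}(2)\ov{w}(2^{-1})\ov{y}(2)$, which normalizes $\ov{K}_0$.

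Two of the four parts are then immediate. For part~(1), $\mathcal{T}_1$ is supported on $\ov{h}(2)$ and, by the extension convention $\gamma(\ov{h}(2^n))=1$, both $\chi_1$ and $\chi_2$ are trivial there; hence the four-coset computation is verbatim that of Proposition~\ref{prop:trans1}(1) and yields the same formula $2^{-(2k+1)/2}\sum_{s=0}^3 f((z+s)/4)$. For part~(4), $\ov{x}(1/2)$ normalizes $\ov{K}_0$ and $\mathcal{Z}_1'(\ov{x}(1/2))=1$, so there is a single term with trivial constant; transferring $x(1/2)$ at the place $2$ to the archimedean place via strong approximation produces the pure translation $f(z-\tfrac{1}{2})$, the sign being fixed by the finite-adelic embedding.

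The substance lies in parts~(2) and~(3). Here I would take the global lifts $W_4,\ W_8$ exactly as in the statement --- chosen to reduce to $w(2^{-1}),\ w(2^{-2})$ at $2$ and to lie in the correct $\Gamma_0(8M)$-class at the remaining finite places via the auxiliary integers $n,m$ --- so that strong approximation identifies $q(\mathcal{U}_1)$ with $c_1\cdot 2\, f|[W_4,\phi_{W_4}(z)]_{k+1/2}$ and $q(\mathcal{U}_2)$ with $c_2\sum_{s=0}^1 f|[W_8,\phi_{W_8}(z)]_{k+1/2}$. The archimedean splitting contributes the common factor $\ov{\zeta_8}\kro{-1}{M}^{k+3/2}$, exactly as for $\chi_1$, since it depends only on $\gamma((-I,1))=-i^{2k+1}$ and $\gamma(\ov{w}(1))=\zeta_8$, which are unchanged. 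The remaining finite-place factor is where $\chi_2$ enters: the classical space now carries the nebentypus $\kro{2}{\cdot}$, so the automorphy factor $j(\beta_i,z)=\kro{2}{d}\varepsilon_d^{-1}\kro{c}{d}(cz+d)^{1/2}$ acquires a Kronecker-symbol twist that must be reconciled with the value of $\chi_2$ on the $\ov{K}_0$-parts of the cosets in Lemma~\ref{lem:decomp1}(c),(d).

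The hard part will be the bookkeeping of these Hilbert- and Kronecker-symbol factors, which is precisely what relocates the factor $\kro{2}{M}$ from $\mathcal{U}_1$ (where it appeared for $\chi_1$) to $\mathcal{U}_2$ for $\chi_2$. The mechanism is that $\chi_1$ and $\chi_2$ differ only in their value on $(h(5),1)$ --- namely $\chi_1((h(5),1))=1$ while $\chi_2((h(5),1))=-1$ --- so the two coset weights in each of the $\mathcal{U}_1$- and $\mathcal{U}_2$-sums are reweighted by a sign governed by the residue modulo $8$ of the relevant $K_0$-entry. Carrying this sign through the decomposition and combining it with the $\kro{2}{d}$ twist from the nebentypus should show that the finite-place constants collapse to $1$ for $\mathcal{U}_1$ and to $\kro{2}{M}$ for $\mathcal{U}_2$, giving the stated formulas; the translation of $\mathcal{V}'$ then follows for free from the identity $\mathcal{V}'=\mathcal{Z}_1'*\mathcal{U}_1*\mathcal{Z}_1'$ since $q$ is multiplicative.
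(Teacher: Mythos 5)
Your proposal is correct and follows essentially the same route as the paper, which itself gives no details here beyond ``similar calculations as in \cite{B-PII}'': namely the adelic-to-classical translation via the single-coset decompositions of Lemma~\ref{lem:decomp1}, strong approximation, and the evaluation of $\chi_2$ on the $\ov{K}_0$-parts of the coset representatives, with the swap of the $\kro{2}{M}$ factor between $\mathcal{U}_1$ and $\mathcal{U}_2$ traced correctly to the differing value of $\chi_1$ versus $\chi_2$ at $(h(5),1)$ combined with the nebentypus $\kro{2}{\cdot}$. The only caveat is that the final Hilbert/Kronecker-symbol bookkeeping is asserted rather than executed, but this is precisely the computation the paper also leaves implicit.
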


\vskip2mm
From now on we consider the case of trivial character.
\noindent Define operators
$\widetilde{W}_8:= q(\widehat{\mathcal{U}_2})$ and 
$\widetilde{V}_4:= q(\mathcal{V})$ on $S_{k+1/2}(\Gamma_0(8M))$
where $\widehat{\mathcal{U}_2},\ \mathcal{V}$ are elements 
in $H(\ov{K_0^2(8)}, \chi_1)$. Note that both $\widetilde{V}_4$ and 
$\widetilde{W}_8$ are involution.
Define $\widetilde{V}'_4$ to be the conjugate of $\widetilde{V}_4$ by 
$\widetilde{W}_8$. 

We have following corollary from Theorem~\ref{thm:genrelchi_1}. 
\begin{cor}
${\widetilde{W}_8}^2 = 1,\ {\widetilde{V}_4}^2 = 1$.
\end{cor}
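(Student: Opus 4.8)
The corollary asserts that $\widetilde{W}_8^2 = 1$ and $\widetilde{V}_4^2 = 1$ as operators on $S_{k+1/2}(\Gamma_0(8M))$. The plan is to deduce these two relations directly from the abstract generator-relation description of the local Hecke algebra $H(\ov{K_0^2(8)}, \chi_1)$ established in Theorem~\ref{thm:genrelchi_1}, by transporting them through the ring isomorphism $q$.

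First I would recall the precise definitions: by construction $\widetilde{W}_8 = q(\widehat{\mathcal{U}_2})$ and $\widetilde{V}_4 = q(\mathcal{V})$, where $\widehat{\mathcal{U}_2} = \frac{1}{\sqrt 2}\mathcal{U}_2$ and $\mathcal{V}$ are the elements of $H(\ov{K_0^2(8)}, \chi_1)$ appearing in Theorem~\ref{thm:genrelchi_1}. The key input is that $q$ is a \emph{ring} isomorphism from $\mathrm{End}_\C(A_{k+1/2}(8M,\chi_0))$ to $\mathrm{End}_\C(S_{k+1/2}(\Gamma_0(8M),\chi))$ coming from Gelbart--Waldspurger, so it carries products of Hecke operators to products (compositions) of the corresponding classical operators, and sends the identity to the identity.

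Next I would invoke relations (1) and (2) of Theorem~\ref{thm:genrelchi_1}, namely $\widehat{\mathcal{U}_2}^2 = 1$ and (taking the identity element's image) the fact that $\mathcal{V}*\mathcal{V} = 1$ in $H(\ov{K_0^2(8)}, \chi_1)$ as recorded in Lemma~\ref{lem:rel4}(1). Applying $q$ to these identities gives
\[
\widetilde{W}_8^2 = q(\widehat{\mathcal{U}_2})\,q(\widehat{\mathcal{U}_2}) = q(\widehat{\mathcal{U}_2}^2) = q(1) = 1,
\]
\[
\widetilde{V}_4^2 = q(\mathcal{V})\,q(\mathcal{V}) = q(\mathcal{V}*\mathcal{V}) = q(1) = 1,
\]
which is exactly the assertion. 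One subtlety worth a sentence is to confirm that the relation $\widehat{\mathcal{U}_2}^2 = 1$ for $\chi_1$ is the one being used (as opposed to the $\chi_2$ version, where instead $\widehat{\mathcal{U}_1}^2 = 1$); since $\widetilde{W}_8$ and $\widetilde{V}_4$ are both defined via elements of $H(\ov{K_0^2(8)}, \chi_1)$, relation (2) of Theorem~\ref{thm:genrelchi_1} is indeed the correct one.

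There is essentially no hard computational step here: the content was already absorbed into Theorem~\ref{thm:genrelchi_1} and the functoriality of $q$. The only thing I would be careful to state explicitly is that $q$ sends the algebra unit to the classical identity operator, so that $q(1) = 1$; this is immediate from $q$ being a ring isomorphism. Thus the main (and only real) point is bookkeeping: match $\widetilde{W}_8$ with $\widehat{\mathcal{U}_2}$ and $\widetilde{V}_4$ with $\mathcal{V}$, cite the two squaring relations, and push them through $q$.
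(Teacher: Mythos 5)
Your proposal is correct and is exactly the argument the paper intends: the corollary is stated as an immediate consequence of Theorem~\ref{thm:genrelchi_1} (together with $\mathcal{V}*\mathcal{V}=1$ from Lemma~\ref{lem:rel4}), pushed through the ring isomorphism $q$. Your bookkeeping of which relations belong to $H(\ov{K_0^2(8)},\chi_1)$ versus $\chi_2$ is also right.
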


\begin{cor}\label{cor:plus}
$S_{k+1/2}(\Gamma_0(4M))$ is contained in the $+1$ eigenspace of 
$\widetilde{V}_4$ and $q(\mathcal{U}_1^2) =4$ on $S_{k+1/2}(\Gamma_0(4M))$.
\end{cor}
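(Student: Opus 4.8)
The plan is to establish the two assertions in turn, the second following formally from the first once the Hecke relation of Proposition~\ref{prop:rel5} is applied through the ring isomorphism $q$. The entire content of the statement rests on a single observation: the matrix appearing in the translation of $\mathcal{V}$ in Proposition~\ref{prop:trans1}(4) already lies in $\Gamma_0(4M)$, so that $\widetilde{V}_4$ must act as the identity on forms of level $4M$.

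First I would make the operator $\widetilde{V}_4 = q(\mathcal{V})$ explicit. By Proposition~\ref{prop:trans1}(4),
\[
\widetilde{V}_4(f)(z) = f\left|\left[\mat{1}{0}{4M}{1},\ (4Mz+1)^{1/2}\right]_{k+1/2}\right.(z).
\]
Setting $\alpha = \mat{1}{0}{4M}{1}$, I note that $\alpha \in \Gamma_0(4M)$, since its lower-left entry is divisible by $4M$. In the trivial-character case the canonical automorphy factor attached to $\alpha$ in $\Delta_0(4M,\chi)$ is
\[
j(\alpha,z) = \chi(1)\,\varepsilon_1^{-1}\,\kro{4M}{1}\,(4Mz+1)^{1/2} = (4Mz+1)^{1/2},
\]
because $\varepsilon_1 = 1$, $\kro{4M}{1} = 1$ and $\chi(1)=1$. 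Hence the metaplectic element used to define $\widetilde{V}_4$ coincides with $\alpha^* = (\alpha, j(\alpha,z)) \in \Delta_0(4M,\chi)$. For any $f \in S_{k+1/2}(\Gamma_0(4M))$ the defining invariance $f|[\alpha^*]_{k+1/2} = f$ then gives $\widetilde{V}_4(f) = f$, so $S_{k+1/2}(\Gamma_0(4M))$ lies in the $+1$ eigenspace of $\widetilde{V}_4$.

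For the second assertion I would invoke that $q$ is a ring homomorphism together with the relation $\mathcal{U}_1 * \mathcal{U}_1 = 2 + 2\mathcal{V}$ of Proposition~\ref{prop:rel5}(2). Applying $q$ yields $q(\mathcal{U}_1^2) = 2 + 2\,\widetilde{V}_4$ as operators on $S_{k+1/2}(\Gamma_0(8M))$. Restricting to the subspace $S_{k+1/2}(\Gamma_0(4M))$, where $\widetilde{V}_4$ acts as the identity by the first part, gives $q(\mathcal{U}_1^2) = 2 + 2 = 4$ there, as claimed.

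The only genuinely delicate point is the matching of automorphy factors: one must confirm that the holomorphic square root $(4Mz+1)^{1/2}$ chosen in Proposition~\ref{prop:trans1}(4) is the same branch as Shimura's $(cz+d)^{1/2}$ entering $j(\alpha,z)$. Since $d = 1 \equiv 1 \pmod 4$, the arithmetic factors $\varepsilon_d$, $\kro{c}{d}$ and $\chi(d)$ are all trivial, and the two expressions reduce to the identical principal-branch square root; thus this verification is routine rather than substantive, and no deeper obstacle arises.
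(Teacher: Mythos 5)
Your proposal is correct. For the first assertion you and the paper do the same thing: the paper simply says it ``follows directly,'' and your write-up supplies the (routine) verification that the pair $\left(\smallmat{1}{0}{4M}{1},\,(4Mz+1)^{1/2}\right)$ defining $\widetilde{V}_4$ in Proposition~\ref{prop:trans1}(4) is exactly the element $\alpha^*\in\Delta_0(4M)$, so invariance of level-$4M$ forms gives the $+1$ eigenvalue. For the second assertion your route is genuinely different from the paper's. The paper observes that the matrix $W_4$ in Proposition~\ref{prop:trans1}(2) coincides with the matrix $W$ of \cite[Proposition 4.8(2)]{B-PII}, so that $q(\mathcal{U}_1)f=2\widetilde{W}_4 f$ for $f$ of level $4M$, and then concludes $q(\mathcal{U}_1^2)f=4f$ from the fact (proved in \cite{B-PII}) that $\widetilde{W}_4$ is an involution on $S_{k+1/2}(\Gamma_0(4M))$. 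You instead push the local relation $\mathcal{U}_1*\mathcal{U}_1=2+2\mathcal{V}$ of Proposition~\ref{prop:rel5}(2) through the ring isomorphism $q$ to get $q(\mathcal{U}_1^2)=2+2\widetilde{V}_4$ on all of $S_{k+1/2}(\Gamma_0(8M))$, and then evaluate on the subspace using part (1). Your argument is self-contained within this paper and more in the spirit of its Hecke-algebra methodology; the paper's argument imports a result from \cite{B-PII} but delivers the finer identity $q(\mathcal{U}_1)=2\widetilde{W}_4$ on $S_{k+1/2}(\Gamma_0(4M))$, which is reused later (e.g.\ in the proof of Proposition~\ref{prop:dsum}). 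Both are valid; the one point worth being explicit about in your version is that $q(\mathcal{U}_1^2)$ is an operator on the full space of level $8M$ and you are merely evaluating it on elements of the subspace, which is exactly what the statement requires.
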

\begin{proof}
The first assertion follows directly. For the second one, observe 
that $W_4$ in Proposition~\ref{prop:trans1} is same as 
$W$ in \cite[Proposition 4.8 (2)]{B-PII}.
So for $f\in S_{k+1/2}(\Gamma_0(4M))$, we have 
$q(\mathcal{U}_1)(f) = 2\widetilde{W}_4(f)$. 
In particular, $q(\mathcal{U}_1^2)(f) =4f$ as $\widetilde{W}_4$ is an 
involution on $S_{k/2}(\Gamma_0(4M))$.
\end{proof}

\begin{lem}\label{lem:sf}
Let $\mathcal{T},\ \mathcal{T}'$ be elements of $H(\ov{K_0^2(8)}, \chi_1)$
respectively supported on the double cosets of 
$\tilde{s},\ \tilde{s}^{-1} \in \DSL_2(\Q_2)$ such that
$\mathcal{T}'(\tilde{s}^{-1}) = \overline{\mathcal{T}(\tilde{s})}$.
Then the $\mathrm{L}^2$-inner product $\langle \Phi, \mathcal{T}\Psi \rangle = 
\langle \mathcal{T}'\Phi, \Psi \rangle$ for any $\Phi,\ \Psi \in 
A_{k+1/2}(8M, \kro{-1}{\cdot}^k)$.
\end{lem}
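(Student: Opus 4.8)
The plan is to realize $\mathcal{T}$ and $\mathcal{T}'$ as convolution operators and to exhibit $\mathcal{T}'$ as the $\mathrm{L}^2$-adjoint of $\mathcal{T}$, which for convolution operators is a formal consequence of the invariance and inversion-invariance of Haar measure. Recall that a Hecke element $\mathcal{T}\in H(\ov{K_0^2(8)},\chi_1)$ acts on $\Phi\in A_{k+1/2}(8M,\kro{-1}{\cdot}^k)$ by right convolution at the place $2$,
\[(\mathcal{T}\Phi)(\tilde g)=\int_{\DSL_2(\Q_2)}\mathcal{T}(\tilde h)\,\Phi(\tilde g\tilde h)\,d\tilde h,\]
the integral being over the single compact double coset supporting $\mathcal{T}$, so that all interchanges below are legitimate. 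For an arbitrary $f\in H(\ov{K_0^2(8)},\chi_1)$ set $f^\ast(\tilde h):=\overline{f(\tilde h^{-1})}$; I claim that the $\mathrm{L}^2$-adjoint of convolution by $f$ is convolution by $f^\ast$, and that under the hypotheses $\mathcal{T}'=\mathcal{T}^\ast$.

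First I would check that $f^\ast$ again lies in $H(\ov{K_0^2(8)},\chi_1)$ and, when $f=\mathcal{T}$ is supported on the double coset of $\tilde s$, that $\mathcal{T}^\ast$ is supported on the double coset of $\tilde s^{-1}$ with $\mathcal{T}^\ast(\tilde s^{-1})=\overline{\mathcal{T}(\tilde s)}$. The support statement is immediate, since $f^\ast(\tilde h)\ne 0$ forces $\tilde h^{-1}\in \ov{K}_0\tilde s\ov{K}_0$, i.e. $\tilde h\in\ov{K}_0\tilde s^{-1}\ov{K}_0$, and the value at $\tilde s^{-1}$ is $\overline{\mathcal{T}(\tilde s)}$ by definition. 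The bi-equivariance is the one delicate point: using that $\chi_1$ is unitary, so that $\overline{\chi_1}=\chi_1^{-1}$ and $\overline{\overline{\chi_1}(\tilde k)}=\overline{\chi_1}(\tilde k)$, one computes
\[f^\ast(\tilde k\tilde g\tilde k')=\overline{f(\tilde k'^{-1}\tilde g^{-1}\tilde k^{-1})}=\overline{\overline{\chi_1}(\tilde k'^{-1})\,\overline{\chi_1}(\tilde k^{-1})\,f(\tilde g^{-1})}=\overline{\chi_1}(\tilde k)\,\overline{\chi_1}(\tilde k')\,f^\ast(\tilde g),\]
so $f^\ast$ transforms correctly. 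Since an element of the Hecke algebra supported on one double coset is determined by its value at a single representative, these three facts give $\mathcal{T}^\ast=\mathcal{T}'$.

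The last step is the unfolding. Writing $X$ for the automorphic quotient on which the forms live and using $\langle\Phi,\Psi\rangle=\int_X\Phi\overline{\Psi}$, I would expand
\[\langle\Phi,\mathcal{T}\Psi\rangle=\int_X\int_{\DSL_2(\Q_2)}\Phi(\tilde g)\,\overline{\mathcal{T}(\tilde h)}\,\overline{\Psi(\tilde g\tilde h)}\,d\tilde h\,d\tilde g,\]
then for fixed $\tilde h$ substitute $\tilde g\mapsto\tilde g\tilde h^{-1}$ using right-invariance of the quotient measure, and finally replace $\tilde h$ by $\tilde h^{-1}$ using unimodularity of $\DSL_2(\Q_2)$ (a central extension by the finite group $\mu_2$ of the unimodular group $\SL_2(\Q_2)$). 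This turns the kernel $\overline{\mathcal{T}(\tilde h)}$ into $\overline{\mathcal{T}(\tilde h^{-1})}=\mathcal{T}^\ast(\tilde h)=\mathcal{T}'(\tilde h)$ and recovers $\langle\mathcal{T}'\Phi,\Psi\rangle$. The main obstacle I anticipate is bookkeeping rather than conceptual: one must make sure the Kubota--Gelbart cocycle behaves correctly under the inversion $\tilde h\mapsto\tilde h^{-1}$ and that the genuine character factors conjugate as claimed, so that $\mathcal{T}^\ast$ genuinely belongs to $H(\ov{K_0^2(8)},\chi_1)$; the measure-theoretic steps (Fubini, invariance, inversion-invariance) are routine given the compact support of $\mathcal{T}$ and the rapid decay of cusp forms.
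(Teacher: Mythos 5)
Your proposal is correct and follows essentially the same route as the paper: unfold the $\mathrm{L}^2$-pairing, apply Fubini, change variables by right-invariance and inversion-invariance of Haar measure, and identify $\overline{\mathcal{T}(x^{-1})}$ with $\mathcal{T}'(x)$. Your explicit check that $\mathcal{T}^\ast$ is $\overline{\chi_1}$-bi-equivariant (hence equals $\mathcal{T}'$, being supported on one double coset and agreeing at $\tilde{s}^{-1}$) is a detail the paper leaves implicit in its final step, but the argument is the same.
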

\begin{proof}
The $\mathrm{L}^2$-inner product
\begin{small}
\begin{equation*}
\begin{split}
\langle \Phi, \mathcal{T}\Psi \rangle &=
\int_{s_{\Q}(\SL_2(\Q))\backslash\DSL_2(\A)/\mu_2} 
\Phi(h) \overline{\mathcal{T}\Psi(h)} dh\\
&=
\int_{s_{\Q}(\SL_2(\Q))\backslash\DSL_2(\A)/\mu_2} 
\Phi(h) 
\int_{\ov{K_0^2(8)}\tilde{s}\ov{K_0^2(8)}} \overline{\mathcal{T}(x)
\Psi(hx)}dx dh\\
&=
\int_{\ov{K_0^2(8)}\tilde{s}\ov{K_0^2(8)}} \overline{\mathcal{T}(x)}
\int_{s_{\Q}(\SL_2(\Q))\backslash\DSL_2(\A)/\mu_2} 
\Phi(h) \overline{\Psi(hx)}dh dx\quad \text{(Fubini)}\\
&=
\int_{s_{\Q}(\SL_2(\Q))\backslash\DSL_2(\A)/\mu_2}  
\int_{\ov{K_0^2(8)}\tilde{s}^{-1}\ov{K_0^2(8)}} 
\overline{\mathcal{T}(x^{-1})}\Phi(hx)dx\ \overline{\Psi(h)} dh\\
&=
\int_{s_{\Q}(\SL_2(\Q))\backslash\DSL_2(\A)/\mu_2}  
\int_{\ov{K_0^2(8)}\tilde{s}^{-1}\ov{K_0^2(8)}} 
\mathcal{T}'(x)\Phi(hx)dx\ \overline{\Psi(h)} dh
=\langle \mathcal{T}'\Phi, \Psi \rangle.
\end{split}
\end{equation*}
\end{small}
\end{proof}

\begin{prop}
The operators $\widetilde{W}_8,\ \widetilde{V}_4,\ \widetilde{V}'_4$ are 
self-adjoint with respect to the Petersson inner product. 
\end{prop}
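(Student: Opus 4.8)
The plan is to reduce the statement to the corresponding assertion about Hecke elements and then invoke Lemma~\ref{lem:sf}. Under the Gelbart--Waldspurger isomorphism $A_{k+1/2}(8M,\kro{-1}{\cdot}^k)\to S_{k+1/2}(\Gamma_0(8M))$ the Petersson inner product on the classical side corresponds, up to a positive scalar, to the $\mathrm{L}^2$-inner product on the adelic side, and the ring isomorphism $q$ intertwines the two Hecke actions. Consequently an operator $q(\mathcal{T})$ is self-adjoint for the Petersson product precisely when $\mathcal{T}$ is self-adjoint for the $\mathrm{L}^2$-pairing on $A_{k+1/2}(8M,\kro{-1}{\cdot}^k)$. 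Since $\widetilde{V}_4=q(\mathcal{V})$ and $\widetilde{W}_8=q(\widehat{\mathcal{U}_2})$ with $\mathcal{V},\widehat{\mathcal{U}_2}\in H(\ov{K_0^2(8)},\chi_1)$, it suffices to prove that $\mathcal{V}$ and $\widehat{\mathcal{U}_2}$ are self-adjoint inside the Hecke algebra.

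By Lemma~\ref{lem:sf}, the adjoint of an element $\mathcal{T}$ supported on the double coset of $\tilde{s}$ is the element $\mathcal{T}'$ supported on the double coset of $\tilde{s}^{-1}$ with $\mathcal{T}'(\tilde{s}^{-1})=\overline{\mathcal{T}(\tilde{s})}$. Hence $\mathcal{T}$ is self-adjoint as soon as (i) $\tilde{s}$ and $\tilde{s}^{-1}$ lie in the same $\ov{K}_0$-double coset, so that $\mathcal{T}$ and $\mathcal{T}'$ have the same support, and (ii) the values agree, i.e. $\mathcal{T}(\tilde{s}^{-1})=\overline{\mathcal{T}(\tilde{s})}$. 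For $\mathcal{V}$ I take $\tilde{s}=\ov{y}(4)$: since $y(4)^{-1}=y(-4)$ and $y(-8)\in K_0^2(8)$, we have $\ov{y}(4)^{-1}\in\ov{K}_0\,\ov{y}(4)$, so (i) holds, and (ii) reduces to evaluating $\chi_1$ on the element $\tilde{k}$ with $\ov{y}(4)^{-1}=\tilde{k}\,\ov{y}(4)$ and checking $\mathcal{V}(\ov{y}(4)^{-1})=1=\overline{\mathcal{V}(\ov{y}(4))}$. For $\widehat{\mathcal{U}_2}=\tfrac{1}{\sqrt2}\mathcal{U}_2$ I take $\tilde{s}=\ov{w}(2^{-2})$; here $w(2^{-2})^{-1}=w(-2^{-2})=(-I)\,w(2^{-2})$ and $-I\in K_0^2(8)$, so (i) again holds, while (ii) follows by computing the relevant value of $\chi_1$ (the real scalar $1/\sqrt2$ plays no role, since $\overline{1/\sqrt2}=1/\sqrt2$).

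In each case both the support claim (i) and the value claim (ii) are short computations in the double cover: one writes $\tilde{s}^{-1}$ as $\tilde{k}\,\tilde{s}\,\tilde{k'}$ with $\tilde{k},\tilde{k'}\in\ov{K}_0$, tracks the Kubota--Gelbart cocycle $\sigma_2$ to pin down the resulting central sign, and reads off $\chi_1(\tilde{k})\chi_1(\tilde{k'})$ using the triangular decomposition \eqref{eq:td}. I expect the cocycle bookkeeping in (ii)---in particular controlling the $s_2$- and Hilbert-symbol factors that determine the central sign of $\tilde{s}^{-1}$ relative to $\tilde{s}$---to be the only delicate point; the support statements (i) are immediate from $-I\in K_0^2(8)$ together with the single-coset descriptions in Lemma~\ref{lem:decomp1}.

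Finally, for $\widetilde{V}'_4$ I use that it is the conjugate $\widetilde{W}_8\,\widetilde{V}_4\,\widetilde{W}_8$ of $\widetilde{V}_4$ by $\widetilde{W}_8$, whose inverse equals itself since ${\widetilde{W}_8}^{2}=1$. As adjunction reverses products and $\widetilde{W}_8,\widetilde{V}_4$ have just been shown to be self-adjoint, we get $(\widetilde{W}_8\,\widetilde{V}_4\,\widetilde{W}_8)^{*}=\widetilde{W}_8^{*}\,\widetilde{V}_4^{*}\,\widetilde{W}_8^{*}=\widetilde{W}_8\,\widetilde{V}_4\,\widetilde{W}_8=\widetilde{V}'_4$, so $\widetilde{V}'_4$ is self-adjoint as well, completing the argument.
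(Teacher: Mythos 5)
Your proposal follows essentially the same route as the paper: transfer the Petersson product to the $\mathrm{L}^2$-pairing via Gelbart--Waldspurger, apply Lemma~\ref{lem:sf} after noting that the double cosets of $(y(4),1)$ and $(w(2^{-2}),1)$ coincide with those of their inverses, and deduce the case of $\widetilde{V}'_4$ from $\widetilde{V}'_4=\widetilde{W}_8\widetilde{V}_4\widetilde{W}_8$. You are in fact slightly more careful than the paper in flagging that the value condition $\mathcal{T}(\tilde{s}^{-1})=\overline{\mathcal{T}(\tilde{s})}$ must also be verified (the paper leaves this implicit), which is a legitimate point but does not change the argument.
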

\begin{proof}
By Gelbart~\cite[(3.10)]{Gelbart} for $f, g \in S_{k+1/2}(\Gamma_0(8M))$ 
the Petersson inner product $\langle f, g \rangle$ equals 
constant times the ${\mathrm{L}^2}$-inner product 
$\langle \Phi_f, \Phi_g \rangle$. In particular if $T$ is an operator on 
$S_{k+1/2}(\Gamma_0(8M))$ such that 
$T = q(\mathcal{T})$ where $\mathcal{T} \in H(\ov{K_0^2(8)}, \chi_1)$, then 
$\langle f, Tg \rangle$ equals the constant times $\langle \Phi_f, \mathcal{T}\Phi_g \rangle$.

Since the $\ov{K_0^2(8)}$ double cosets of $(y(4),1)$ and $(w(2^{-2}),1)$ equal 
respectively that of $(y(4),1)^{-1}$ and $(w(2^{-2}),1)^{-1}$, 
by Lemma~\ref{lem:sf} we are done.
\end{proof}

\subsection{Comparison with Kohnen's projection map}
Kohnen~\cite[Page 37]{Kohnen2} and later Ueda-Yamana~\cite{U-Y}
define function $P_8 (f) = f|[\xi + \xi^{-1}]_{k+1/2}$ where 
$\xi = (\mat{4}{1}{0}{4}, e^{\pi i/4})$. We have the following observation.
\begin{prop}
Let $f\in S_{k+1/2}(\Gamma_0(8M))$. Then 
$q(\mathcal{Z}_2)(f) = \kro{2}{2k+1} P_8(f)$.
\end{prop}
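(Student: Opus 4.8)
The plan is to combine the fact that $q$ is a ring isomorphism with the explicit slash‑operator formulas of Proposition~\ref{prop:trans1}. Since $\mathcal{Z}_2 = \mathcal{V} * \mathcal{U}_2 * \mathcal{V}$ and the $\mathcal{V}$‑factor occurs on both sides, $q(\mathcal{Z}_2) = q(\mathcal{V})\circ q(\mathcal{U}_2)\circ q(\mathcal{V})$; and because the weight‑$(k+1/2)$ slash is a right action of $\mathcal{G}$ (so composing operators amounts to multiplying elements of $\mathcal{G}$), Proposition~\ref{prop:trans1}(3),(4) give
\[q(\mathcal{Z}_2)(f) = C \sum_{s=0}^{1} f|[\zeta_V\,\zeta_{W_8,s}\,\zeta_V]_{k+1/2}, \qquad C = \ov{\zeta_8}\kro{-1}{M}^{k+3/2},\]
where $\zeta_V = (\mat{1}{0}{4M}{1}, (4Mz+1)^{1/2})$ and $\zeta_{W_8,s} = (W_8, \phi_{W_8})$ are as in that proposition. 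Thus each summand is a single slash operator with matrix part $V W_8 V$; this is the classical shadow of the local identity $y(4)w(2^{-2})y(4) = x(1/4)$, i.e. a translation by $1/4$ at the prime $2$, which matches $P_8$.

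Next I would reduce $V W_8 V$ modulo $\Gamma_0(8M)$. A direct computation, using the defining relation $16n - mM = 1$ — which forces $mM \equiv -1 \pmod{16}$ — shows that for each $s \in \{0,1\}$ one can factor $V W_8 V = \gamma_s\,\xi^{\pm 1}$ with $\gamma_s \in \Gamma_0(8M)$, where $\xi^{\pm 1}$ has matrix part $\mat{4}{\pm 1}{0}{4}$; the two values of $s$ produce the two opposite signs (which of $s=0,1$ gives $\xi$ versus $\xi^{-1}$ depends on $M \bmod 4$, but this is immaterial since $P_8$ is symmetric in $\xi$ and $\xi^{-1}$). The points to check are that the off‑diagonal entries of $\gamma_s$ are integral and that its lower‑left entry is divisible by $8M$; both follow from $mM \equiv -1 \pmod{16}$. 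Writing $\zeta_{\gamma_s}=(\gamma_s, j(\gamma_s,z)) \in \Delta_0(8M,\mathbf{1})$ and using $f|[\zeta_{\gamma_s}]_{k+1/2}=f$, one strips $\gamma_s$ off to get
\[f|[\zeta_V\,\zeta_{W_8,s}\,\zeta_V]_{k+1/2} = c_s^{\,-2k-1}\, f|[\xi^{\pm 1}]_{k+1/2},\]
where $c_s$ is the ratio of the $\phi$‑component of $\zeta_V\zeta_{W_8,s}\zeta_V$ to $j(\gamma_s,\xi^{\pm 1}z)\,e^{\pm\pi i/4}$. Both of these are holomorphic square roots of the same function $t\,\det(V W_8 V)^{-1/2}(cz+d)$ up to the unit‑circle parameter $t$, so their ratio $c_s$ is a constant root of unity.

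It then remains to show $C\, c_s^{\,-2k-1} = \kro{2}{2k+1}$ for both $s$, whence
\[q(\mathcal{Z}_2)(f) = \kro{2}{2k+1}\bigl(f|[\xi]_{k+1/2} + f|[\xi^{-1}]_{k+1/2}\bigr) = \kro{2}{2k+1} P_8(f).\]
This last identity is the main obstacle: it requires evaluating the $\phi$‑component of the metaplectic product $\zeta_V\zeta_{W_8,s}\zeta_V$ against the theta factor $j(\gamma_s,\cdot) = \varepsilon_{d}^{-1}\kro{c}{d}(cz+d)^{1/2}$, carefully matching the compatible branches of the various $(cz+d)^{1/2}$ and the half‑integral powers. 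The resulting root of unity is pinned down by a Kronecker‑symbol / quadratic‑reciprocity computation and, assembled with $\ov{\zeta_8}$ (where $\zeta_8 = \frac{1+\gamma((-I,1))}{\sqrt{2}}$) and the factor $\kro{-1}{M}^{k+3/2}$, collapses to the real sign $\kro{2}{2k+1} = (-1)^{k(k+1)/2}$. I expect the bookkeeping of these $2$‑adic and archimedean automorphy factors to be the only genuinely delicate part; the matrix algebra and the stripping of $\gamma_s$ are routine, of the type already carried out in \cite{B-PII}.
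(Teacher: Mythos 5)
Your route is genuinely different from the paper's, and its structural skeleton does check out. You work entirely on the classical side: writing $q(\mathcal{Z}_2)=q(\mathcal{V})\,q(\mathcal{U}_2)\,q(\mathcal{V})$ and multiplying the slash operators of Proposition~\ref{prop:trans1}, then reducing $VW_8V$ modulo $\Gamma_0(8M)$. I verified the matrix algebra you sketch: for $s=0$ one gets $VW_8V=\smallmat{4(20n-1)}{m}{64M(5n+1)}{4mM+16}$, the congruence $mM\equiv-1\pmod{16}$ makes exactly one of $\gamma_s\xi$, $\gamma_s\xi^{-1}$ integral according to $M\bmod 4$, and the two values of $s$ do supply opposite signs; the stripping of $\gamma_s$ via modularity and the observation that $c_s$ is a unimodular constant are also fine. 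The paper instead stays on the metaplectic side: it notes that $\mathcal{Z}_2$ is supported on the single double coset of $x(1/4)=y(4)w(2^{-2})y(4)$, decomposes it as $\sqcup_{s=0}^{1}\smallmat{1}{-1/4}{8Ms}{1-2Ms}\ov{K_0^2(8)}$, and matches this term by term against the Ueda--Yamana formula \cite[eq.~2.2]{U-Y} for $P_8$. What that buys is decisive: the entire proportionality constant reduces to the single character value $\mathcal{Z}_2((x(-1/4),1))=\ov{\zeta_8}(-i^{2k+1})$, which is already available from the Hecke-algebra computations, plus the elementary identity $\kro{2}{2k+1}\ov{\zeta_8}(-i^{2k+1})=e^{-(2k+1)\pi i/4}$.

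The genuine gap in your write-up is exactly the step you defer: the identity $C\,c_s^{-2k-1}=\kro{2}{2k+1}$ for $s=0,1$. Once one knows the matrix part of $\zeta_V\zeta_{W_8,s}\zeta_V$ is $\gamma_s\xi^{\pm1}$, it is immediate that $q(\mathcal{Z}_2)(f)$ is some unimodular combination $C c_0^{-2k-1}f|[\xi^{\pm1}]+C c_1^{-2k-1}f|[\xi^{\mp1}]$; the content of the proposition is precisely that both constants equal $\kro{2}{2k+1}$ (note you must also prove $c_0^{-2k-1}=c_1^{-2k-1}$, since otherwise you do not even obtain a multiple of $P_8=f|[\xi+\xi^{-1}]$). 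Writing ``I expect the bookkeeping to collapse to $\kro{2}{2k+1}$'' asserts the theorem rather than proving it. To close the gap on your route you would have to actually compute the $\phi$-component of the threefold product in $\mathcal{G}$ (including the $\ov{\zeta_8}\kro{-1}{M}^{k+3/2}$ prefactor and the $\kro{2}{M}$-free normalization of $\mathcal{U}_2$) against $j(\gamma_s,\xi^{\pm1}z)e^{\pm\pi i/4}$ for each $s$ and each residue of $M\bmod 8$ --- doable but error-prone. Alternatively, adopt the paper's shortcut and extract the constant as a single value of $\mathcal{Z}_2$ at $(x(-1/4),1)$.
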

\begin{proof}
By \cite[equation 2.2]{U-Y}, we can write 
\[P_8(f) = e^{-(2k+1)\pi i/4} \sum_{s=0}^{1}f|[\mat{4-8Ms}{1}{-32Ms}{4}, 
(-8Msz+1)^{1/2}]_{k+1/2}.\]
Now the proof essentially follows by observing that 
$\mathcal{Z}_2$ is precisely supported on the  
double coset of 
$\ov{K_0^2(8)}\mat{1}{1/4}{0}{1}\ov{K_0^2(8)} = 
\ov{K_0^2(8)}\mat{1}{-1/4}{0}{1}\ov{K_0^2(8)} 
= \sqcup_{s=0}^{1}\mat{1}{-1/4}{8Ms}{1-2Ms}\ov{K_0^2(8)}$.
Indeed, computing as before we obtain 
\[q(\mathcal{Z}_2)(f) = \mathcal{Z}_2((\mat{1}{-1/4}{0}{1},1)) e^{(2k+1)\pi i/4}P_8(f).\]
Also it is easy to check that 
\[\mathcal{Z}_2((\mat{1}{-1/4}{0}{1},1)) = 
\mathcal{Z}_2((\mat{1}{1/4}{0}{1},1)) \ov{\gamma}(-I,-1) 
= \ov{\zeta_8}(-i^{2k+1})\] and that
$\kro{2}{2k+1}\ov{\zeta_8}(-i^{2k+1}) = e^{-(2k+1)\pi i/4}$.
\end{proof}
Now using the relation in Proposition~\ref{prop:rel5}(3) we have
\begin{cor}\label{cor:plus1}
$\frac{1}{\sqrt{2}}\kro{2}{2k+1} P_8= 
\widetilde{V}_4\widetilde{W}_8\widetilde{V}_4 = 
\widetilde{W}_8\widetilde{V}_4\widetilde{W}_8 = \widetilde{V}'_4$.
\end{cor}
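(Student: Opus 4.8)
The plan is to read this off from the Proposition immediately preceding the corollary, which gives $q(\mathcal{Z}_2)(f) = \kro{2}{2k+1} P_8(f)$, by re-expressing $q(\mathcal{Z}_2)$ in terms of the operators $\widetilde{W}_8$ and $\widetilde{V}_4$. Recall that $\mathcal{Z}_2 = \mathcal{V}*\mathcal{U}_2*\mathcal{V}$ by the definition of $\mathcal{Z}_n$ for $n=2$, and that $\widetilde{W}_8 = q(\widehat{\mathcal{U}_2})$ with $\widehat{\mathcal{U}_2} = \tfrac{1}{\sqrt 2}\mathcal{U}_2$, so $q(\mathcal{U}_2) = \sqrt{2}\,\widetilde{W}_8$, while $\widetilde{V}_4 = q(\mathcal{V})$. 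The whole argument is then a matter of applying the ring isomorphism $q$ to the two palindromic expressions for $\mathcal{Z}_2$ and bookkeeping the factors of $\sqrt{2}$.

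First I would apply $q$ to $\mathcal{Z}_2 = \mathcal{V}*\mathcal{U}_2*\mathcal{V}$. Since this word in the generators is palindromic (its two outer factors coincide), the image under $q$ is the same whether $q$ preserves or reverses the order of convolution, so
\[
q(\mathcal{Z}_2) = q(\mathcal{V})\,q(\mathcal{U}_2)\,q(\mathcal{V}) = \sqrt{2}\,\widetilde{V}_4\widetilde{W}_8\widetilde{V}_4 .
\]
Next I would use Proposition~\ref{prop:rel5}(3), which states $\mathcal{U}_2*\mathcal{V}*\mathcal{U}_2 = \sqrt{2}\,\mathcal{V}*\mathcal{U}_2*\mathcal{V} = \sqrt{2}\,\mathcal{Z}_2$, hence $\mathcal{Z}_2 = \tfrac{1}{\sqrt 2}\,\mathcal{U}_2*\mathcal{V}*\mathcal{U}_2$. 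Applying $q$ to this second (again palindromic) expression gives
\[
q(\mathcal{Z}_2) = \tfrac{1}{\sqrt 2}\,q(\mathcal{U}_2)\,q(\mathcal{V})\,q(\mathcal{U}_2) = \tfrac{1}{\sqrt 2}\cdot 2\,\widetilde{W}_8\widetilde{V}_4\widetilde{W}_8 = \sqrt{2}\,\widetilde{W}_8\widetilde{V}_4\widetilde{W}_8 .
\]
Equivalently, the braid relation Theorem~\ref{thm:genrelchi_1}(4), $\widehat{\mathcal{U}_2}\mathcal{V}\widehat{\mathcal{U}_2} = \mathcal{V}\widehat{\mathcal{U}_2}\mathcal{V}$, transports directly under $q$ to $\widetilde{W}_8\widetilde{V}_4\widetilde{W}_8 = \widetilde{V}_4\widetilde{W}_8\widetilde{V}_4$, so the two computations of $q(\mathcal{Z}_2)$ agree.

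To conclude I would combine these with the preceding Proposition $q(\mathcal{Z}_2) = \kro{2}{2k+1} P_8$ to obtain
\[
\kro{2}{2k+1} P_8 = \sqrt{2}\,\widetilde{V}_4\widetilde{W}_8\widetilde{V}_4 = \sqrt{2}\,\widetilde{W}_8\widetilde{V}_4\widetilde{W}_8 ,
\]
and then divide through by $\sqrt{2}$. Finally, since $\widetilde{V}'_4$ is defined as the conjugate of $\widetilde{V}_4$ by $\widetilde{W}_8$ and $\widetilde{W}_8$ is an involution (${\widetilde{W}_8}^2 = 1$), we have $\widetilde{V}'_4 = \widetilde{W}_8\widetilde{V}_4\widetilde{W}_8$, which supplies the last equality. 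I do not expect a genuine obstacle here: the only points requiring care are the correct tracking of the $\sqrt{2}$ normalizations coming from $\widehat{\mathcal{U}_2}=\tfrac{1}{\sqrt 2}\mathcal{U}_2$, and the observation that the palindromic form of the relevant words makes the result insensitive to the variance of $q$, so the precise multiplicative behaviour of $q$ need not be pinned down.
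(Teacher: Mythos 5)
Your proposal is correct and follows exactly the route the paper intends: the paper's proof is the single remark ``using the relation in Proposition~\ref{prop:rel5}(3)'' applied to the preceding identity $q(\mathcal{Z}_2)=\kro{2}{2k+1}P_8$, and you have simply written out the bookkeeping of the $\sqrt{2}$ normalizations from $\widehat{\mathcal{U}_2}=\tfrac{1}{\sqrt{2}}\mathcal{U}_2$ together with the definition $\mathcal{Z}_2=\mathcal{V}*\mathcal{U}_2*\mathcal{V}$ and the braid relation. The observation that the palindromic shape of the words makes the argument insensitive to whether $q$ preserves or reverses products is a nice extra precaution, but otherwise this is the same proof.
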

Extending Kohnen's definition, Ueda-Yamana \cite{U-Y} define the plus space 
$S_{k+1/2}^{+}(8M)$ to consist of 
$f= \sum_{n=1}^{\infty}a_nq^n \in S_{k+1/2}(\Gamma_0(8M))$ such that 
$a_n=0$ for $(-1)^k n \equiv 2, 3 \pmod{4}$.
\begin{cor}\label{cor:plus2}
$S_{k+1/2}^{+}(8M)$ is the $+1$ eigenspace of 
$\widetilde{V}'_4$. 
The $-1$ eigenspace of $\widetilde{V}'_4$
consists of $f$ such that $a_n=0$ for $(-1)^k n \equiv 0, 1 \pmod{4}$. 
\end{cor}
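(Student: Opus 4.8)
The plan is to prove Corollary~\ref{cor:plus2} by reducing the eigenspace statement for $\widetilde{V}'_4$ to the Fourier-coefficient action of the Kohnen operator $P_8$, which Corollary~\ref{cor:plus1} has already identified with $\widetilde{V}'_4$ up to the explicit constant $\tfrac{1}{\sqrt 2}\kro{2}{2k+1}$. Since $\widetilde{V}'_4$ is an involution (it is conjugate to $\widetilde{V}_4$, which squares to $1$ by Theorem~\ref{thm:genrelchi_1}), the whole space decomposes as the direct sum of its $\pm 1$ eigenspaces, so it suffices to compute the action on Fourier coefficients and read off which $n$ survive in each eigenspace. The first step is therefore to make the action of $P_8$ on a $q$-expansion completely explicit.

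First I would take $f = \sum_{n\ge 1} a_n q^n \in S_{k+1/2}(\Gamma_0(8M))$ and compute $P_8(f) = f|[\xi+\xi^{-1}]_{k+1/2}$ directly from the slash-operator definition, using $\xi = (\smallmat{4}{1}{0}{4}, e^{\pi i/4})$. Because $\xi$ and $\xi^{-1}$ act through the upper-triangular matrices $\smallmat{1}{1/4}{0}{1}$ and $\smallmat{1}{-1/4}{0}{1}$ (rescaled), the slash action only shifts $z$ by $\pm 1/4$ and contributes a root-of-unity factor; concretely one finds that the $n$-th Fourier coefficient of $P_8(f)$ is $a_n$ multiplied by $\bigl(e^{2\pi i n/4}+e^{-2\pi i n/4}\bigr)$ times a normalizing eighth-root-of-unity factor coming from the automorphy factor. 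The combination $e^{\pi i n/2}+e^{-\pi i n/2} = 2\cos(\pi n/2)$ vanishes exactly when $n$ is odd and equals $\pm 2$ when $n$ is even, and tracking the weight-dependent phase $e^{\pm(2k+1)\pi i/4}$ together with the constant $\kro{2}{2k+1}$ supplied by the preceding proposition converts this into a clean statement about the residue of $n$ modulo $4$.

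Next I would feed this computation through Corollary~\ref{cor:plus1}, which gives $\widetilde{V}'_4 = \tfrac{1}{\sqrt 2}\kro{2}{2k+1}P_8$. Matching the normalization so that $\widetilde{V}'_4$ acts as $\pm 1$, the coefficient $a_n$ is multiplied by $+1$ precisely when $(-1)^k n \equiv 0,1 \pmod 4$ fails and by $-1$ when it holds — or the reverse, with the sign decided by the explicit phase bookkeeping. The point is that the twist by $(-1)^k$ enters through the factor $i^{2k+1}$ (equivalently $\kro{-1}{\cdot}^k$) hidden in the automorphy factor, which is exactly what flips the parity condition between $k$ even and $k$ odd. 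Comparing with the Ueda--Yamana definition of $S_{k+1/2}^{+}(8M)$ as those $f$ with $a_n=0$ for $(-1)^k n \equiv 2,3 \pmod 4$, one sees that the $+1$ eigenspace of $\widetilde{V}'_4$ forces $a_n = 0$ on the complementary congruence classes $(-1)^k n \equiv 0,1$; wait — rather, $f$ lies in the $+1$ eigenspace iff all coefficients supported on $(-1)^k n \equiv 2,3$ vanish, giving the plus space, and in the $-1$ eigenspace iff $a_n = 0$ for $(-1)^k n \equiv 0,1 \pmod 4$, giving the second assertion.

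The main obstacle is the careful tracking of the root-of-unity and sign factors: the automorphy factor $\phi(z)^{-2k-1}$ contributes an eighth root of unity depending on $2k+1 \bmod 8$, the genuine character value $\gamma((-I,1)) = -i^{2k+1}$ feeds in through the constant in the preceding proposition, and the Kronecker symbol $\kro{2}{2k+1}$ must combine with $e^{\pm(2k+1)\pi i/4}$ to produce exactly $\pm 1$ rather than a spurious phase. I expect the honest work to be verifying that these factors collapse so that $\widetilde{V}'_4$ acts by genuine signs $\pm 1$ (consistent with its being an involution) and that the parity convention $(-1)^k$ comes out with the correct orientation; once the cosine factor $2\cos(\pi n/2)$ and this phase are pinned down, the partition of $n$ by residue mod $4$ — and hence both the identification of the plus space with the $+1$ eigenspace and the description of the $-1$ eigenspace — is immediate.
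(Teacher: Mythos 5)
Your strategy is the paper's own: use Corollary~\ref{cor:plus1} to identify $\widetilde{V}'_4$ with $\tfrac{1}{\sqrt 2}\kro{2}{2k+1}P_8$, make the action of $P_8$ on Fourier coefficients explicit, and compare with the Ueda--Yamana definition of $S^{+}_{k+1/2}(8M)$. The only difference is that the paper simply quotes Kohnen's equation~(2), namely $P_8(f)=\sqrt 2\,\kro{2}{2k+1}\bigl(\sum^{(1)}_n a_nq^n-\sum^{(2)}_n a_nq^n\bigr)$, whereas you propose to rederive that formula from the slash definition of $P_8$.

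The problem is that the intermediate formula you commit to in that derivation is wrong, and it is wrong at precisely the step you identify as the crux. You assert that the $n$-th coefficient of $P_8(f)$ is $a_n\bigl(e^{\pi i n/2}+e^{-\pi i n/2}\bigr)=2a_n\cos(\pi n/2)$ times a single ``normalizing eighth-root-of-unity factor,'' and you conclude this vanishes for odd $n$ and equals $\pm 2$ for even $n$. That cannot be correct: it would make $P_8$, hence $\widetilde{V}'_4$, annihilate every odd-indexed coefficient, contradicting Kohnen's formula and the involution property you yourself invoke; carried forward it would also give the wrong partition of the coefficients into eigenspaces (by parity of $n$ rather than by the residue of $(-1)^kn$ mod $4$). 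The source of the error is that the automorphy factors of $\xi$ and $\xi^{-1}$ are complex conjugates, contributing $e^{-(2k+1)\pi i/4}$ and $e^{+(2k+1)\pi i/4}$ respectively, so they cannot be pulled out as a common phase. The correct multiplier on $a_n$ is
\[
e^{\pi i n/2}e^{-(2k+1)\pi i/4}+e^{-\pi i n/2}e^{(2k+1)\pi i/4}
=2\cos\Bigl(\frac{\pi(2n-2k-1)}{4}\Bigr),
\]
which, since $2n-2k-1$ is odd, is always $\pm\sqrt 2$ and never zero; one checks directly that it equals $+\sqrt2\,\kro{2}{2k+1}$ exactly when $(-1)^kn\equiv 0,1\pmod 4$ and $-\sqrt2\,\kro{2}{2k+1}$ otherwise, recovering Kohnen's formula. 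With that correction your argument closes and coincides with the paper's proof; as written, the computation you describe would not.
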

\begin{proof}
From \cite[equation(2)]{Kohnen2}, 
$P_8(f) = \sqrt{2}\kro{2}{2k+1} (\sum_n^{(1)} a_nq^n - \sum_n^{(2)} a_n q^n)$ 
where $\sum_n^{(1)}$ resp. $\sum_n^{(2)}$ runs over $n$ with 
$(-1)^k n \equiv 0, 1 \pmod{4}$ resp. $(-1)^k n \equiv 2, 3 \pmod{4}$.
The result now follows using the above corollary.
\end{proof}
 
Consider the projection map $\wp_k$ \cite{U-Y} onto the plus space which 
take $\sum_n a_n q^n$ to $\sum_n^{(1)} a_n q^n$. 
\begin{cor}\label{cor:plus3}
If $f$ belongs to the $-1$ eigenspace of $\widetilde{V}'_4$ 
then $\wp_k(f)=0$. 
\end{cor}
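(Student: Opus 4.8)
The plan is to deduce the vanishing directly from the Fourier-coefficient description of the $-1$ eigenspace of $\widetilde{V}'_4$ obtained in Corollary~\ref{cor:plus2}, together with the definition of $\wp_k$. First I would write $f=\sum_{n=1}^{\infty} a_n q^n$ and invoke Corollary~\ref{cor:plus2}: membership of $f$ in the $-1$ eigenspace of $\widetilde{V}'_4$ is equivalent to $a_n=0$ for every $n$ with $(-1)^k n\equiv 0,1\pmod 4$. Next I would recall from its definition that $\wp_k$ retains precisely the Fourier terms indexed by such $n$, namely $\wp_k(f)=\sum_n^{(1)} a_n q^n$ with $\sum_n^{(1)}$ running over $(-1)^k n\equiv 0,1\pmod 4$. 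Comparing the two, every coefficient surviving in $\wp_k(f)$ is forced to be zero, so $\wp_k(f)=0$.

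Conceptually the statement asserts that the $-1$ eigenspace of $\widetilde{V}'_4$ lies in the kernel of the plus-space projection. This is exactly what one expects: by Corollary~\ref{cor:plus2} the plus space $S_{k+1/2}^{+}(8M)$ is the $+1$ eigenspace of the involution $\widetilde{V}'_4$, and $\wp_k$ projects onto the plus space, so since the two eigenspaces of an involution are complementary, the projection onto one annihilates the other. I would mention this eigenprojection viewpoint as a sanity check, but carry out the actual argument at the level of Fourier expansions, where the matching of index sets makes the vanishing transparent.

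There is essentially no remaining obstacle: all of the analytic content has already been absorbed into Corollary~\ref{cor:plus2}, whose proof rests on the explicit expansion of $P_8(f)$ from \cite{Kohnen2} and the identification $\frac{1}{\sqrt{2}}\kro{2}{2k+1} P_8=\widetilde{V}'_4$ of Corollary~\ref{cor:plus1}. The only thing to watch is to use the same normalization of the index sets $\sum_n^{(1)}$ and $\sum_n^{(2)}$ in both Corollary~\ref{cor:plus2} and the definition of $\wp_k$, so that ``coefficients killed by the $-1$ eigenspace condition'' and ``coefficients kept by $\wp_k$'' are literally the same set of indices; once this bookkeeping is aligned the conclusion is immediate.
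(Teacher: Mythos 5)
Your argument is correct and is exactly the one the paper intends: Corollary~\ref{cor:plus3} is stated without proof precisely because it follows immediately from the Fourier-coefficient description of the $-1$ eigenspace in Corollary~\ref{cor:plus2} together with the definition of $\wp_k$ as retaining the coefficients with $(-1)^k n\equiv 0,1\pmod 4$. Your index-set bookkeeping matches the paper's normalization, so nothing further is needed.
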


\section{Minus space of $S_{k+1/2}(\Gamma_0(8M))$}\label{sec:minus}
Let $M$ be odd and square-free. In this section we shall 
define the minus space $S_{k+1/2}^{-}(8M)$ and 
show that there is an Hecke algebra isomorphism between 
$S_{k+1/2}^{-}(8M)$ 
and $S_{2k}^{\text{new}}(\Gamma_0(4M))$. We shall give a 
characterization of the minus space as common $-1$ eigenspace 
of certain operators. The method we 
employ is similar to \cite{B-PII}. The main tools that we use are 
the generators and relations
of Theorem~\ref{thm:genrelchi_1} and their translation into classical 
operators. 
We also need the operators $\widetilde{Q}_p$, $\widetilde{Q}'_p$, 
$\widetilde{W}_{p^2}$ on 
$S_{k +1/2}(\Gamma_0(2^nM))$ for $p \mid M$ and 
operators $\widetilde{Q}_2$, $\widetilde{Q}'_2$, 
$\widetilde{W}_{4}$ on $S_{k +1/2}(\Gamma_0(4M))$ that we defined 
in \cite{B-PII}.

The following proposition is crucial to our study of the minus space. 
To prove it we will use the relations in Theorem~\ref{thm:genrelchi_1} 
including the crucial braid relation (Theorem~\ref{thm:genrelchi_1}(4)).
\begin{prop}\label{prop:dsum}
 \begin{enumerate}
  \item Let $f \in S_{k+1/2}(\Gamma_0(4M)$. Then\\
  $f \in S_{k+1/2}^{+}(4M) \iff \widetilde{W}_8f = f$.
  \item $S^{+}_{k+1/2}(4M) + \widetilde{W}_4 
  S^{+}_{k+1/2}(4M) + 
  \widetilde{W}_8 \widetilde{W}_4S^{+}_{k+1/2}(4M)$ is a direct sum. 
  \item $S_{k+1/2}^{-}(4M) + \widetilde{W}_8S_{k+1/2}^{-}(4M)$ is a direct sum. 
 \end{enumerate}
\end{prop}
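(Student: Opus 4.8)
The plan is to treat all three parts through the algebraic relations of Theorem~\ref{thm:genrelchi_1}, which govern the translated operators $\widetilde{W}_8 = q(\widehat{\mathcal{U}_2})$ and $\widetilde{V}_4 = q(\mathcal{V})$. The key structural facts I would assemble first are: both $\widetilde{W}_8$ and $\widetilde{V}_4$ are self-adjoint involutions, $S_{k+1/2}(\Gamma_0(4M))$ lies in the $+1$ eigenspace of $\widetilde{V}_4$ by Corollary~\ref{cor:plus}, and the plus space $S_{k+1/2}^{+}(8M)$ is precisely the $+1$ eigenspace of $\widetilde{V}'_4 = \widetilde{W}_8\widetilde{V}_4\widetilde{W}_8$ by Corollary~\ref{cor:plus2}. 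For the inner-product arguments I would invoke the self-adjointness just established, so that eigenspaces of distinct eigenvalues are orthogonal.

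\textbf{Part (1).} For $f \in S_{k+1/2}(\Gamma_0(4M))$, I would show the equivalence $f \in S_{k+1/2}^{+}(4M) \iff \widetilde{W}_8 f = f$. Since $f$ already sits in the $+1$ eigenspace of $\widetilde{V}_4$, I would use the braid relation $\widetilde{V}_4\widetilde{W}_8\widetilde{V}_4 = \widetilde{W}_8\widetilde{V}_4\widetilde{W}_8$ from Theorem~\ref{thm:genrelchi_1}(4), equivalently Corollary~\ref{cor:plus1}, to compute $\widetilde{V}'_4 f$. Concretely, $\widetilde{V}'_4 f = \widetilde{W}_8\widetilde{V}_4\widetilde{W}_8 f$, and because $\widetilde{V}_4$ acts as the identity on the level-$4M$ space one reduces the Kohnen plus-space condition (membership in the $+1$ eigenspace of $\widetilde{V}'_4$) to the condition $\widetilde{W}_8 f = f$. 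The main care here is bookkeeping: applying the relations in the correct order and using that $\widetilde{V}_4|_{S_{k+1/2}(\Gamma_0(4M))} = \mathrm{id}$ to collapse the braid word.

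\textbf{Parts (2) and (3): directness of the sums.} For part (2), I would prove directness of $S^{+}_{k+1/2}(4M) + \widetilde{W}_4 S^{+}_{k+1/2}(4M) + \widetilde{W}_8\widetilde{W}_4 S^{+}_{k+1/2}(4M)$ by separating the three summands using eigenvalue/orthogonality data for the commuting (or braid-related) involutions. The natural approach is to find operators that distinguish the summands: using self-adjointness and the relations, I would show each summand lies in a distinct joint eigenspace configuration, so a relation $u + \widetilde{W}_4 v + \widetilde{W}_8\widetilde{W}_4 w = 0$ with $u,v,w \in S^{+}_{k+1/2}(4M)$ forces $u=v=w=0$. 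Part (3) is the analogous but simpler statement: since $\widetilde{W}_8$ is a self-adjoint involution, $S_{k+1/2}^{-}(4M)$ and $\widetilde{W}_8 S_{k+1/2}^{-}(4M)$ are interchanged by $\widetilde{W}_8$, and I would exhibit them as lying in opposite eigenspaces (or use an inner-product pairing) to force the intersection to be zero.

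\textbf{Anticipated main obstacle.} I expect the real difficulty to be part (2): controlling the \emph{triple} sum. The two involutions $\widetilde{W}_4$ and $\widetilde{W}_8$ satisfy a braid rather than commuting relation, so there is no clean simultaneous eigenbasis, and the word $\widetilde{W}_8\widetilde{W}_4$ is not itself an involution. The crux will be to extract enough independent linear functionals (via Petersson pairings against well-chosen images, exploiting self-adjointness and the exact value $\widehat{\mathcal{U}_2}^2 = 1$, $\mathcal{V}^2 = 1$ and the braid identity) to peel off the three components one at a time. Once the correct pairings are identified, directness should follow, but identifying them and verifying the resulting Gram-type nondegeneracy is where the substantive work lies.
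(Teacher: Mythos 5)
Your part (1) is essentially sound and is a legitimate variant of the paper's argument: you use the characterization of the $+1$ eigenspace of $\widetilde{V}'_4$ (Corollary~\ref{cor:plus2}) in both directions together with the braid word in the form $\widetilde{V}'_4=\widetilde{V}_4\widetilde{W}_8\widetilde{V}_4$ and $\widetilde{V}_4|_{S_{k+1/2}(\Gamma_0(4M))}=\mathrm{id}$, whereas the paper uses that only for one implication and proves the other by pushing $\widetilde{W}_8f=f$ through the relations $\mathcal{U}_1*\mathcal{U}_2=\mathcal{T}_1$ and $q(\mathcal{U}_1^2)=4$ to land on the eigenvalue characterization $\widetilde{Q}'_2f=2f$ of the Kohnen space. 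Just be careful to conjugate on the correct side: from $\widetilde{W}_8\widetilde{V}_4\widetilde{W}_8f=f$ alone you only get that $\widetilde{W}_8f$ is $\widetilde{V}_4$-invariant, so you must pass through the other spelling of the braid relation to reach $\widetilde{W}_8f=\widetilde{V}_4f=f$.

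For parts (2) and (3), however, there is a genuine gap. Your plan to separate the three summands by ``distinct joint eigenspace configurations'' or a Gram-type pairing cannot work as stated: $S^{+}_{k+1/2}(4M)$ and $\widetilde{W}_4S^{+}_{k+1/2}(4M)$ both lie inside $S_{k+1/2}(\Gamma_0(4M))$, hence both lie in the $+1$ eigenspace of $\widetilde{V}_4$, so no word in the involutions $\widetilde{V}_4,\widetilde{W}_8$ distinguishes them by eigenvalues, and no amount of self-adjointness recovers their disjointness. The indispensable external input, absent from your proposal, is the level-$4M$ fact $S^{+}_{k+1/2}(4M)\cap\widetilde{W}_4S^{+}_{k+1/2}(4M)=\{0\}$ from \cite{B-PII}. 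The paper's actual mechanism for (2) is short and algebraic: from $f+q(\mathcal{U}_1)g+q(\widehat{\mathcal{U}_2}\mathcal{U}_1)h=0$ apply $q(\mathcal{V})$, which fixes the first two terms (Corollary~\ref{cor:plus} and $\mathcal{V}\mathcal{U}_1=\mathcal{U}_1$) and turns the third into $q(\mathcal{V}\widehat{\mathcal{U}_2}\mathcal{U}_1)h$; subtracting gives $q(\widehat{\mathcal{U}_2})h'=q(\mathcal{V}\widehat{\mathcal{U}_2})h'$ for $h'=q(\mathcal{U}_1)h$, then $\widehat{\mathcal{U}_2}^2=1$ and the braid relation yield $\widetilde{W}_8h'=h'$, so part (1) forces $h'\in S^{+}_{k+1/2}(4M)\cap\widetilde{W}_4S^{+}_{k+1/2}(4M)=\{0\}$, and the remaining two terms vanish for the same reason. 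Similarly in (3) the two spaces do not lie in \emph{opposite} eigenspaces of any single operator; they lie in the $+1$ eigenspaces of $\widetilde{V}_4$ and of its conjugate $\widetilde{V}'_4$ respectively, and a common element satisfies $\widetilde{W}_8f=f$ by the braid relation, hence lies in $S^{+}_{k+1/2}(4M)\cap S^{-}_{k+1/2}(4M)=\{0\}$ by part (1). In short, the engine of (2) and (3) is part (1) combined with the level-$4M$ disjointness results, not an orthogonality or Gram argument.
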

\begin{proof}
We first prove $(1)$. For $f \in S_{k+1/2}(\Gamma_0(4M)$ we have 
\begin{equation*}
\begin{split}
q(\widehat{\mathcal{U}_2})f &= \widetilde{W}_8f = f \Longrightarrow 
q(\mathcal{U}_1\widehat{\mathcal{U}_2})f = q(\mathcal{U}_1)f 
\Longrightarrow q(\mathcal{T}_1/\sqrt{2})f = q(\mathcal{U}_1)f\\
&\Longrightarrow q(\mathcal{U}_1\mathcal{T}_1/\sqrt{2})f = q(\mathcal{U}_1^2)f =4f
\Longrightarrow \widetilde{W}_4U_4f = 2^k f\\
&\Longrightarrow \widetilde{Q}'_2(f) = 2f
\Longrightarrow f \in S_{k+1/2}^{+}(4M).
\end{split}
\end{equation*}
The second implication follows from Lemma~\ref{lem:rel3} while the third 
and fourth follows from 
Corollary~\ref{cor:plus}. For the last part, see \cite[Section 4.3]{B-PII}. 
Now let $f \in S_{k+1/2}^{+}(4M)$. Since $f$ satisfies the plus-space 
Fourier coefficient condition, it follows from Corollary~\ref{cor:plus2} that 
$\widetilde{V}'_4(f)= f$, i.e., 
$\widetilde{W}_8\widetilde{V}_4\widetilde{W}_8(f)= 
\widetilde{V}_4\widetilde{W}_8\widetilde{V}_4(f) = f$. 
Using Corollary~\ref{cor:plus} we get that 
$\widetilde{W}_8f = f$.

We now prove $(2)$.
Let $f,\ g,\ h \in S^{+}_{k+1/2}(4M)$ be such that 
$f + q(\mathcal{U}_1)g + q(\widehat{\mathcal{U}_2}\mathcal{U}_1)h =0$
(note that $q(\mathcal{U}_1) = 2\widetilde{W}_4$ on 
$S_{k+1/2}(\Gamma_0(4M))$). 
Applying $q(\mathcal{V})$ to the above equation and using 
Corollary~\ref{cor:plus} and Theorem~\ref{thm:genrelchi_1}(3) we get
$f + q(\mathcal{U}_1)g + q(\mathcal{V}\widehat{\mathcal{U}_2}\mathcal{U}_1)h =0$.
Let $h' = q(\mathcal{U}_1)h \in S_{k+1/2}(\Gamma_0(4M))$. Subtracting the above equations we have 
$q(\widehat{\mathcal{U}_2})h' = q(\mathcal{V}\widehat{\mathcal{U}_2})h'$. 
Next applying $q(\widehat{\mathcal{U}_2})$ to the above and using 
Theorem~\ref{thm:genrelchi_1}(4) we have 
$h' = q(\mathcal{V}\widehat{\mathcal{U}_2}\mathcal{V})h'$.
As $\mathcal{V}^2 = 1$ and using Corollary~\ref{cor:plus}, we get 
$\widehat{\mathcal{U}_2}h' = h'$. Now part $(1)$ implies that 
$h' \in S_{k+1/2}^{+}(4M)$. Thus  $h' = 0$ as
$S^{+}_{k+1/2}(4M) \bigcap 
\widetilde{W}_4S^{+}_{k+1/2}(4M)=\{0\}$ 
(follows as in \cite[Proposition 6.17]{B-PII}) 
and consequently $f=g=h=0$.

For $(3)$ observe that 
$S_{k+1/2}^{-}(4M)$ is contained in the $+1$ eigenspace 
of $\widetilde{V}_4$ and  $\widetilde{W}_8S_{k+1/2}^{-}(4M)$
is contained in the $+1$ eigenspace 
of $\widetilde{V}'_4$. Let $f \ne 0$ belongs to the intersection. 
Then $\widetilde{V}_4f = f = \widetilde{V}'_4f$. Now using 
$\widetilde{V}'_4 = q(\widehat{\mathcal{U}_2}\mathcal{V}\widehat{\mathcal{U}_2}) = 
q(\mathcal{V}\widehat{\mathcal{U}_2}\mathcal{V})$
(Theorem~\ref{thm:genrelchi_1}(4))
we get $\widehat{\mathcal{U}_2}(f) = f$.
Thus by $(1)$, $f \in S_{k+1/2}^{+}(4M)\cap S_{k+1/2}^{-}(4M)$, a 
contradiction.
\end{proof}

We recall the following theorem of Ueda.
\begin{thm}(Ueda~\cite{Ueda})\label{T:Ueda}
Let $M$ be odd and square-free.
There exists an isomorphism of vector spaces
$\psi:S_{k+1/2}(\Gamma_0(8M))\to S_{2k}(\Gamma_0(4M))$ 
satisfying 
\[T_p (\psi(f))=\psi (T_{p^2}(f))\ \ \text{for all primes $p$ coprime to $2M$}.\] 
\end{thm}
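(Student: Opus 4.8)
The plan is to construct $\psi$ from the Shimura correspondence and to verify it is an isomorphism by comparing the two sides as modules over the Hecke algebra generated by the good operators. The first observation is that both sides are \emph{semisimple} as modules over these algebras: for $p\nmid 2M$ the operator $T_{p^2}$ on $S_{k+1/2}(\Gamma_0(8M))$ and the operator $T_p$ on $S_{2k}(\Gamma_0(4M))$ are self-adjoint with respect to the Petersson inner product (the character being trivial here), so the commutative algebras they generate act by simultaneously diagonalizable operators. Under the assignment $T_{p^2}\mapsto T_p$ these two algebras are identified, and a semisimple module over a commutative semisimple algebra is determined up to isomorphism by its character, that is, by the system of simultaneous eigenvalues counted with multiplicity. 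Thus the theorem reduces to showing that the two eigenvalue systems, indexed by $p\nmid 2M$, coincide with the same multiplicities.

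Next I would use Shimura's lift to match eigenvalue systems in one direction. Given a common eigenform $f$ of $\{T_{p^2}\}_{p\nmid 2M}$ with eigenvalue $\omega_p$, Shimura's theory produces a weight-$2k$ form whose $L$-function factors as $L(s-k+1,\chi)\sum_n b_n n^{-s}$ with $T_p$-eigenvalue $\omega_p$; the content is to control the level of this lift and show it lands in $S_{2k}(\Gamma_0(4M))$. I expect this conductor computation at $p=2$ to be the main obstacle: for level $8M$ the local component at $2$ of the metaplectic representation attached to $f$ can be supercuspidal, and pinning down the exact conductor of the lift at $2$, so that the level is precisely $4M$ and not larger, is exactly the difficulty flagged by Waldspurger and Ueda for the levels $2^kM$. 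Here the local Hecke-algebra description of Section~3, together with the identification $H(\ov{K_0^2(8)},\chi_i)\cong H(\GL_2(\Q_2)//K_0(4))/\langle\mathcal{Z}\rangle$, is what renders the bookkeeping tractable, since it pins the level-$4$ structure on the $\PGL_2$ side and thereby the conductor at $2$ of the image.

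To obtain the matching of multiplicities, and hence both injectivity and surjectivity, I would compare traces. Using the Eichler--Selberg trace formula on $S_{2k}(\Gamma_0(4M))$ and the Shimura-type trace formula for $T_{p^2}$ on $S_{k+1/2}(\Gamma_0(8M))$, I would establish $\mathrm{tr}(T_{p^2}\mid S_{k+1/2}(\Gamma_0(8M)))=\mathrm{tr}(T_p\mid S_{2k}(\Gamma_0(4M)))$ for every $p\nmid 2M$, together with equality of the two dimensions (the trace of the identity). Since the good Hecke algebras are generated by the $T_{p^2}$ respectively the $T_p$, and both modules are semisimple, equality of all these traces forces the eigenvalue systems to agree with multiplicity; the isomorphism $\psi$ then exists, eigenspace by eigenspace, and is automatically equivariant for the good operators by construction. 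The delicate input is once more localized at $2$: the geometric side of the half-integral-weight trace formula at level $8M$ requires precisely the $2$-adic analysis developed above, and it is the agreement of these $2$-adic contributions with the level-$4M$ integral-weight side that closes the argument.
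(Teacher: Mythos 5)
The paper offers no proof of this statement: it is imported wholesale from Ueda \cite{Ueda}, whose argument is precisely the trace comparison you sketch in your first and third paragraphs, so your overall strategy is the historically correct one. Two steps as written do not hold up, however. The concluding deduction is too weak: knowing $\mathrm{tr}(T_{p^2}\mid S_{k+1/2}(\Gamma_0(8M)))=\mathrm{tr}(T_p\mid S_{2k}(\Gamma_0(4M)))$ for each prime $p\nmid 2M$, together with equality of dimensions, does not determine a semisimple module over the commutative algebra generated by these operators. The character of such a module must be known on \emph{all} elements of the algebra, i.e.\ on arbitrary monomials in the generators; traces on a generating set do not propagate to traces of products, and two multisets of eigenvalue systems can have equal coordinatewise sums without being equal (replace $\lambda_p,\mu_p$ by $\lambda_p+1,\mu_p-1$ for every $p$). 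What Ueda actually establishes, following Shimura and Niwa \cite{Niwa}, is the identity of traces for the full family $T(n^2)$ versus $T(n)$ with $n$ coprime to $2M$ (equivalently for all $T_{p^{2m}}$), and it is this larger family of identities that pins down the eigenvalue systems with multiplicity.

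Your middle paragraph is also a detour that would not work as stated. The trace-formula route never needs to control the conductor at $2$ of the Shimura lift of an individual eigenform; avoiding exactly that form-by-form construction is the point of comparing traces. Moreover, invoking the isomorphism $H(\ov{K_0^2(8)},\chi_i)\cong H(\GL_2(\Q_2)//K_0(4))/\langle\mathcal{Z}\rangle$ of Section 3 to ``pin the conductor at $2$'' is circular relative to this paper's architecture: Theorem~\ref{T:Ueda} is an external input which the paper then combines with the Section 3 computations to analyse the minus space, and the local Hecke algebra isomorphism by itself says nothing about which representations actually occur in $S_{k+1/2}(\Gamma_0(8M))$ or with what multiplicity --- only about the structure of the fixed vectors once a representation is given. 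A proof along the representation-theoretic lines you gesture at would amount to a Waldspurger-style multiplicity-preserving local--global Shimura correspondence, a far larger undertaking than the paragraph suggests; the trace formula is what lets Ueda sidestep it.
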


We first construct the minus space at level $8$. 
In the above theorem take $M=1$. It follows using 
Proposition~\ref{prop:dsum}, Atkin-Lehner 
and dimension equality 
(see \cite[Corollary 6.1]{B-PII}) that
\begin{lem}\label{lem:Gamma1}
$\psi$ maps
$S^{+}(4) \oplus \widetilde{W}_4S^{+}(4) \oplus 
\widetilde{W}_8 \widetilde{W}_4S^{+}(4)$
isomorphically onto $S_{2k}(\Gamma_0(1))\oplus 
V(2) S_{2k}(\Gamma_0(1)) \oplus V(4) S_{2k}(\Gamma_0(1))$.
\end{lem}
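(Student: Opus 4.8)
The plan is to compare dimensions after first checking that $\psi$ carries the left-hand space into the level-$1$ oldspace of $S_{2k}(\Gamma_0(4))$, and then to conclude by injectivity of $\psi$. Before anything else I would record that both sides are genuine direct sums. For the source this is exactly Proposition~\ref{prop:dsum}(2), which asserts that $S^{+}(4) + \widetilde{W}_4 S^{+}(4) + \widetilde{W}_8\widetilde{W}_4 S^{+}(4)$ is direct; since $\widetilde{W}_4$ and $\widetilde{W}_8\widetilde{W}_4$ are invertible, each summand has dimension $\dim S^{+}(4)$, so the source has dimension $3\dim S^{+}(4)$. For the target, the linear independence of $g(z),\,g(2z),\,g(4z)$ for $g\in S_{2k}(\Gamma_0(1))$ is the standard oldform statement (Atkin--Lehner theory), so $S_{2k}(\Gamma_0(1))\oplus V(2)S_{2k}(\Gamma_0(1))\oplus V(4)S_{2k}(\Gamma_0(1))$ is direct of dimension $3\dim S_{2k}(\Gamma_0(1))$. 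By the Kohnen isomorphism $S^{+}(4)\cong S_{2k}(\Gamma_0(1))$ (equivalently \cite[Corollary 6.1]{B-PII}) the two sides have equal dimension $3\dim S_{2k}(\Gamma_0(1))$.

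Next I would show that $\psi$ sends the source into the target, using the Hecke compatibility $T_p\circ\psi=\psi\circ T_{p^2}$ for odd $p$ from Theorem~\ref{T:Ueda}. Under the Kohnen correspondence the $T_{p^2}$-eigensystems (odd $p$) occurring on $S^{+}(4)$ are precisely those of level-$1$ forms; moreover $\widetilde{W}_4$ and $\widetilde{W}_8$ are operators at the prime $2$ and commute with $T_{p^2}$ for odd $p$, so the same eigensystems occur on $\widetilde{W}_4 S^{+}(4)$ and on $\widetilde{W}_8\widetilde{W}_4 S^{+}(4)$. Applying $\psi$ together with the compatibility, every eigenform in the source maps to a form in $S_{2k}(\Gamma_0(4))$ whose $T_p$-eigenvalues ($p$ odd) match those of a level-$1$ newform $g$; by strong multiplicity one the simultaneous eigenspace for the odd Hecke operators carrying $g$'s eigensystem is exactly $\mathrm{span}\{g(z),g(2z),g(4z)\}$, so $\psi(\text{source})\subseteq\text{target}$.

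Finally, since $\psi$ is injective on all of $S_{k+1/2}(\Gamma_0(8))$, the image of the source has dimension equal to that of the source, namely $3\dim S_{2k}(\Gamma_0(1))=\dim(\text{target})$; combined with the inclusion just established this forces $\psi(\text{source})=\text{target}$, and $\psi$ restricts to the asserted isomorphism. The step I expect to be the main obstacle is the middle one: making precise that the odd-prime eigensystems appearing in the source are \emph{exactly} the level-$1$ ones, and that strong multiplicity one therefore pins the image inside the three-dimensional level-$1$ oldspace rather than allowing contributions from newforms of level $2$ or $4$. This is precisely where the Kohnen correspondence for $S^{+}(4)$ and the commutation of $\widetilde{W}_4,\widetilde{W}_8$ with the odd Hecke operators carry the real weight of the argument.
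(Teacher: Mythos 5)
Your proposal is correct and follows essentially the same route the paper indicates: directness of the source via Proposition~\ref{prop:dsum}(2), Atkin--Lehner theory (strong multiplicity one and the oldform structure) to place the image inside the level-$1$ oldspace, and the dimension equality $\dim S^{+}(4)=\dim S_{2k}(\Gamma_0(1))$ together with injectivity of $\psi$ to conclude. The only difference is that you spell out the eigensystem/commutation argument that the paper leaves implicit in its citation of Atkin--Lehner.
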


Also since $S^{-}(4)$ is Hecke isomorphic to 
$S^{\mathrm{new}}_{2k}(\Gamma_0(2))$ \cite{B-P} we have 
\begin{lem}\label{lem:Gamma2}
$\psi$ maps $S^{-}(4) \oplus \widetilde{W}_8S^{-}(4)$
isomorphically onto $S^{\mathrm{new}}_{2k}(\Gamma_0(2))\oplus 
V(2) S^{\mathrm{new}}_{2k}(\Gamma_0(2))$.
\end{lem}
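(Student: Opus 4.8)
The plan is to mirror the argument for Lemma~\ref{lem:Gamma1}, replacing the level-$1$ old part by the level-$2$ old part and replacing the raw dimension count by the Hecke isomorphism $S^{-}(4)\cong S^{\mathrm{new}}_{2k}(\Gamma_0(2))$ of \cite{B-P}. Both $S^{-}(4)$ and $\widetilde{W}_8 S^{-}(4)$ sit inside $S_{k+1/2}(\Gamma_0(8))$, so $\psi$ is defined on their sum.

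First I would identify the target with the space of oldforms in $S_{2k}(\Gamma_0(4))$ coming from level $2$. By Atkin--Lehner theory, for each newform $g\in S^{\mathrm{new}}_{2k}(\Gamma_0(2))$ the forms $g(z)$ and $g(2z)=V(2)g(z)$ are linearly independent in $S_{2k}(\Gamma_0(4))$, so
\[
S^{\mathrm{new}}_{2k}(\Gamma_0(2))\oplus V(2)S^{\mathrm{new}}_{2k}(\Gamma_0(2))
\]
is a genuine direct sum of dimension $2\dim S^{\mathrm{new}}_{2k}(\Gamma_0(2))$, and it is precisely the subspace of $S_{2k}(\Gamma_0(4))$ on which the prime-to-$2$ Hecke eigensystems are those of level-$2$ newforms.

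The key step is to show that $\psi$ carries both $S^{-}(4)$ and $\widetilde{W}_8 S^{-}(4)$ into this level-$2$ old subspace. Since $\widehat{\mathcal{U}_2}\in H(\ov{K_0^2(8)},\chi_1)$ is a $2$-adic operator, $\widetilde{W}_8=q(\widehat{\mathcal{U}_2})$ commutes with every $T_{p^2}$ for odd $p$; hence $\widetilde{W}_8 S^{-}(4)$ carries the same prime-to-$2$ eigensystems as $S^{-}(4)$. By \cite{B-P} these eigensystems are exactly the prime-to-$2$ eigensystems of $S^{\mathrm{new}}_{2k}(\Gamma_0(2))$ under the correspondence $T_{p^2}\leftrightarrow T_p$, and $\psi$ intertwines $T_{p^2}$ with $T_p$ for all odd $p$ by Theorem~\ref{T:Ueda}. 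Thus $\psi(S^{-}(4))$ and $\psi(\widetilde{W}_8 S^{-}(4))$ have prime-to-$2$ eigensystems matching level-$2$ newforms. By strong multiplicity one for $\GL_2$, a cusp form in $S_{2k}(\Gamma_0(4))$ whose prime-to-$2$ eigensystem agrees with that of a level-$2$ newform must lie in the span of that newform's old classes; summing over newforms forces $\psi(S^{-}(4)),\,\psi(\widetilde{W}_8 S^{-}(4))\subseteq S^{\mathrm{new}}_{2k}(\Gamma_0(2))\oplus V(2)S^{\mathrm{new}}_{2k}(\Gamma_0(2))$.

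Finally I would conclude by dimension. By Proposition~\ref{prop:dsum}(3) the sum $S^{-}(4)\oplus\widetilde{W}_8 S^{-}(4)$ is direct, and since $\widetilde{W}_8$ is an involution and $\psi$ is injective,
\[
\dim\psi\!\left(S^{-}(4)\oplus\widetilde{W}_8 S^{-}(4)\right)=2\dim S^{-}(4)=2\dim S^{\mathrm{new}}_{2k}(\Gamma_0(2)),
\]
the last equality by the Hecke isomorphism of \cite{B-P}. As this matches the dimension of the target computed above, the inclusion of the previous step is forced to be an equality, and $\psi$ restricts to the asserted isomorphism. The main obstacle is the middle step: one must ensure that the Shimura correspondence $S^{-}(4)\cong S^{\mathrm{new}}_{2k}(\Gamma_0(2))$ and the Ueda map $\psi$ use compatible normalisations of the prime-to-$2$ Hecke action, so that strong multiplicity one genuinely pins the image down to the level-$2$ old subspace rather than to some unrelated space of the right dimension.
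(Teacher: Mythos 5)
Your proposal is correct and follows essentially the same route as the paper, which disposes of this lemma in one line by combining the Hecke isomorphism $S^{-}(4)\cong S^{\mathrm{new}}_{2k}(\Gamma_0(2))$ with the same ingredients used for Lemma~\ref{lem:Gamma1} (directness of the sum from Proposition~\ref{prop:dsum}(3), Atkin--Lehner/multiplicity one to locate the image, and a dimension count). Your write-up simply makes explicit the steps the paper leaves implicit, including the commutation of $\widetilde{W}_8$ with the odd Hecke operators and the injectivity of $\psi$.
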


Let $E:= (S^{+}(4) \oplus \widetilde{W}_4S^{+}(4) \oplus 
\widetilde{W}_8 \widetilde{W}_4S^{+}(4))\ \oplus\ (S^{-}(4) \oplus \widetilde{W}_8S^{-}(4))$.
Thus $\psi$ maps $E$ Hecke isomorphically onto 
$S^{\mathrm{old}}_{2k}(\Gamma_0(4))$.

Define $S_{k+1/2}^{-}(8)$ to be the orthogonal complement of
$E$. 
\begin{thm}
$S^{-}(8)$ has a basis of eigenforms for all the operators $T_{p^2}$, $p$ odd; 
these eigenforms are also eigenfunctions under $U_{4}$. 
If two eigenforms in $S^{-}(8)$ share the same eigenvalues 
for all $T_{p^2}$ then they are a scalar multiple of each other. 
$\psi$ induces a Hecke algebra isomorphism:
\[S_{k+1/2}^{-}(8) \cong S_{2k}^{\mathrm{new}}(\Gamma_0(4)).\]
\end{thm}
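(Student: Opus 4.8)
The plan is to push everything through Ueda's isomorphism $\psi$ and to identify the image $\psi(S^{-}(8))$ by means of strong multiplicity one for integral weight forms. Recall that $\psi$ carries $E$ Hecke-isomorphically onto $S^{\mathrm{old}}_{2k}(\Gamma_0(4))$ (the paragraph preceding the statement), and that $S^{-}(8)$ is the Petersson-orthogonal complement of $E$. Since $\psi$ is a vector space isomorphism, applying it to $S_{k+1/2}(\Gamma_0(8)) = E \oplus S^{-}(8)$ gives a direct sum $S_{2k}(\Gamma_0(4)) = S^{\mathrm{old}}_{2k}(\Gamma_0(4)) \oplus \psi(S^{-}(8))$; hence $\psi(S^{-}(8)) \cap S^{\mathrm{old}}_{2k}(\Gamma_0(4)) = \{0\}$ and $\dim \psi(S^{-}(8)) = \dim S^{\mathrm{new}}_{2k}(\Gamma_0(4))$. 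I stress that Theorem~\ref{T:Ueda} only provides $\psi T_{p^2} = T_p \psi$ for \emph{odd} $p$, so $\psi$ need not preserve orthogonal complements and I must exploit the Hecke action at the odd primes.

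First I would record stability and diagonalizability. For odd $p$ the operators $T_{p^2}$ commute with the involutions $\widetilde{W}_4$ and $\widetilde{W}_8$ and preserve the Hecke-stable spaces $S^{+}(4)$ and $S^{-}(4)$, so they preserve every summand of $E$, and thus $E$; being commuting and self-adjoint for the Petersson product, they then preserve $S^{-}(8) = E^{\perp}$ and admit a common eigenbasis of $S^{-}(8)$. This yields the first assertion. I would likewise check, using Lemma~\ref{lem:sf} and the relations of Theorem~\ref{thm:genrelchi_1}, that $U_4 = 2^{(2k-3)/2} q(\mathcal{T}_1)$ together with its adjoint preserves $E$, so that $U_4$ preserves $S^{-}(8)$ as well; this is routine and is deferred.

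The crux is the identification $\psi(S^{-}(8)) = S^{\mathrm{new}}_{2k}(\Gamma_0(4))$. Because $S^{-}(8)$ is $T_{p^2}$-stable and $\psi$ intertwines $T_{p^2}$ with $T_p$, the space $\psi(S^{-}(8))$ is stable under all $T_p$ with $p$ odd. I would decompose $S_{2k}(\Gamma_0(4))$ into joint eigenspaces for the commuting self-adjoint family $\{T_p\}_{p\text{ odd}}$; on each such eigenspace the $T_p$ act by scalars $\{a_p\}$. By strong multiplicity one for $\GL_2$, the systems $\{a_p\}_{p\text{ odd}}$ attached to newforms of level $4$ are disjoint from those attached to newforms of level $1$ or $2$ (two cusp forms agreeing at all odd primes and at $\infty$ agree almost everywhere, hence have equal conductor); consequently each joint eigenspace lies \emph{entirely} in $S^{\mathrm{old}}_{2k}(\Gamma_0(4))$ or \emph{entirely} in $S^{\mathrm{new}}_{2k}(\Gamma_0(4))$. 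As $\psi(S^{-}(8))$ is the sum of its intersections with these eigenspaces and meets $S^{\mathrm{old}}_{2k}(\Gamma_0(4))$ only in $0$, no old eigenspace contributes, so $\psi(S^{-}(8)) \subseteq S^{\mathrm{new}}_{2k}(\Gamma_0(4))$; the dimension equality of the first paragraph forces equality. This disjointness step, which converts a mere odd-Hecke isomorphism into a statement about the new subspace, is where I expect the genuine difficulty.

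The remaining assertions follow formally. The restriction $\psi\colon S^{-}(8) \to S^{\mathrm{new}}_{2k}(\Gamma_0(4))$ is a vector space isomorphism intertwining $T_{p^2}$ and $T_p$ for every odd $p$, i.e.\ the desired Hecke isomorphism. Strong multiplicity one on $S^{-}(8)$ descends from that on $S^{\mathrm{new}}_{2k}(\Gamma_0(4))$: two $\{T_{p^2}\}$-eigenforms of $S^{-}(8)$ with identical eigenvalues map to level-$4$ newforms with identical $a_p$ for all odd $p$, hence proportional, hence proportional upstairs. In particular each common $\{T_{p^2}\}_{p\text{ odd}}$-eigenspace of $S^{-}(8)$ is one-dimensional; since $U_4$ commutes with every $T_{p^2}$ and preserves $S^{-}(8)$, it preserves each such line and acts there by a scalar, so the eigenforms are simultaneously $U_4$-eigenfunctions.
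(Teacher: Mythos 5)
Your proposal is correct and follows essentially the same route as the paper, which simply invokes Lemmas~\ref{lem:Gamma1} and~\ref{lem:Gamma2}, Theorem~\ref{T:Ueda}, and the argument of \cite[Theorem 5]{B-PII} --- i.e.\ exactly the stability-of-$E$ / joint-eigenspace / strong-multiplicity-one / dimension-count argument you spell out. The one step you defer (that $U_4$ preserves $S^{-}(8)$, equivalently that its Petersson adjoint $q(\mathcal{T}_{-1})$ preserves $E$) does hold because $E$ is a module over the image of $H(\ov{K_0^2(8)},\chi_1)$, though the paper later gets the sharper statement $U_4f=0$ directly via the Shimura lift in Proposition~\ref{prop:u4}.
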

\begin{proof}
The proof uses Lemma~\ref{lem:Gamma1} and \ref{lem:Gamma2}, 
Theorem~\ref{T:Ueda} and follows by the argument in \cite[Theorem 5]{B-PII}.
\end{proof}

\begin{prop}\label{prop:u4}
If $f \in S_{k+1/2}^{-}(8)$ is a Hecke eigenform for all the Hecke operators 
$T_{p^2}$, $p$ odd prime then $\widetilde{W}_8(f) = \pm f$. 

Further for any $f \in S_{k+1/2}^{-}(8)$, we have $U_4f=0$
and $\widetilde{V}_4f = -f = \widetilde{V}'_4f$.
\end{prop}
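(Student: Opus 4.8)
The plan is to derive both assertions from the generators-and-relations of Theorem~\ref{thm:genrelchi_1}, once I know that the relevant level-$2$ operators respect the orthogonal decomposition defining $S^{-}(8)$. I begin with the first assertion. The operator $\widetilde{W}_8=q(\widehat{\mathcal{U}_2})$ comes from the $2$-adic Hecke algebra and so commutes with every $T_{p^2}$ for $p$ odd, which come from the Hecke algebras at the distinct primes $p$. Moreover $\widetilde{W}_8$ preserves $E$: by Proposition~\ref{prop:dsum}(1) it fixes $S^{+}(4)$ pointwise, and since ${\widetilde{W}_8}^{2}=1$ it interchanges the summands $\widetilde{W}_4S^{+}(4)\leftrightarrow\widetilde{W}_8\widetilde{W}_4S^{+}(4)$ and $S^{-}(4)\leftrightarrow\widetilde{W}_8S^{-}(4)$, so $\widetilde{W}_8E=E$; being self-adjoint for the Petersson product it then also preserves $E^{\perp}=S^{-}(8)$. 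Restricted to $S^{-}(8)$ the involution $\widetilde{W}_8$ still commutes with all $T_{p^2}$, and the preceding theorem supplies strong multiplicity one on $S^{-}(8)$. Hence the joint $\{T_{p^2}\}_{p\text{ odd}}$-eigenline through an eigenform $f$ is stabilised by $\widetilde{W}_8$, giving $\widetilde{W}_8 f=\lambda f$ with $\lambda^{2}=1$, i.e. $\widetilde{W}_8 f=\pm f$.

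For the second assertion the crucial step is to identify the $+1$-eigenspace of $\widetilde{V}_4=q(\mathcal{V})$. By Proposition~\ref{prop:trans1}(4), $\widetilde{V}_4 f=f|[\smallmat{1}{0}{4}{1},(4z+1)^{1/2}]_{k+1/2}$, and since $\Gamma_0(4)$ is generated by $\Gamma_0(8)$ together with $\smallmat{1}{0}{4}{1}$ as an index-$2$ extension, a cusp form of level $8$ is fixed by $\widetilde{V}_4$ exactly when it is $\Delta_0(4)$-invariant, i.e. exactly when it lies in $S_{k+1/2}(\Gamma_0(4))$. Thus the $+1$-eigenspace of the self-adjoint involution $\widetilde{V}_4$ is $S_{k+1/2}(\Gamma_0(4))=S^{+}(4)\oplus S^{-}(4)$, which is contained in $E$ by construction. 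Consequently $S^{-}(8)=E^{\perp}$ is orthogonal to this $+1$-eigenspace and hence lies in the $-1$-eigenspace, so $\widetilde{V}_4 f=-f$ for every $f\in S^{-}(8)$. Writing $\widetilde{V}'_4=\widetilde{W}_8\widetilde{V}_4\widetilde{W}_8$ and using that $\widetilde{W}_8$ preserves $S^{-}(8)$, I get $\widetilde{V}'_4 f=\widetilde{W}_8\bigl(-\widetilde{W}_8 f\bigr)=-f$ as well.

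It remains to show $U_4 f=0$. Relation (3) of Theorem~\ref{thm:genrelchi_1} gives $q(\widehat{\mathcal{U}_1})\,\widetilde{V}_4=q(\widehat{\mathcal{U}_1})$, so applying it to $f\in S^{-}(8)$ and using $\widetilde{V}_4 f=-f$ yields $q(\widehat{\mathcal{U}_1})f=-q(\widehat{\mathcal{U}_1})f$; hence $q(\widehat{\mathcal{U}_1})$ annihilates $S^{-}(8)$. Now $\mathcal{T}_1=\mathcal{U}_1*\mathcal{U}_2=2\,\widehat{\mathcal{U}_1}*\widehat{\mathcal{U}_2}$ by Lemma~\ref{lem:rel3}(4), and $U_4=2^{(2k-3)/2}q(\mathcal{T}_1)$ by Proposition~\ref{prop:trans1}(1), so $U_4$ is a nonzero scalar times the composite of $q(\widehat{\mathcal{U}_1})$ and $\widetilde{W}_8=q(\widehat{\mathcal{U}_2})$. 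Since $\widetilde{W}_8$ preserves $S^{-}(8)$ while $q(\widehat{\mathcal{U}_1})$ kills it, this composite annihilates $f$ in either order, giving $U_4 f=0$. Note that the formulas $\widetilde{V}_4 f=-f=\widetilde{V}'_4 f$ and $U_4 f=0$ hold for \emph{every} $f\in S^{-}(8)$, not merely eigenforms; the eigenform hypothesis and strong multiplicity one are used only to obtain $\widetilde{W}_8 f=\pm f$.

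The step I expect to be the main obstacle is the passage from the abstract generators-and-relations picture to the concrete spaces: verifying that $\widetilde{W}_8$ (and therefore $\widetilde{V}'_4$) genuinely stabilises $S^{-}(8)$, and that the $+1$-eigenspace of $\widetilde{V}_4$ is \emph{exactly} $S_{k+1/2}(\Gamma_0(4))\subseteq E$. These rest on the self-adjointness inputs preceding Section~\ref{sec:minus} (via Lemma~\ref{lem:sf}) and on the explicit coset/automorphy description of $\widetilde{V}_4$ in Proposition~\ref{prop:trans1}(4); once they are secured, all the remaining identities are formal consequences of Theorem~\ref{thm:genrelchi_1} together with strong multiplicity one.
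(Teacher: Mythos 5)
Your proof is correct, but it reaches the second assertion by a genuinely different route from the paper, and the two are worth contrasting. For $\widetilde{W}_8 f=\pm f$ you and the paper are essentially aligned: the paper applies $\psi$ and Atkin--Lehner to $\widetilde{W}_8 f$ directly, while you first check that $\widetilde{W}_8$ stabilises $E$ (hence, by self-adjointness, $S^{-}(8)=E^{\perp}$) and then invoke the multiplicity-one statement of the preceding theorem; since that statement is itself proved via $\psi$ and Atkin--Lehner, this is the same mechanism packaged differently. The real divergence is in the order of the remaining claims. The paper proves $U_4f=0$ \emph{first}, by a global argument: $\psi(f)$ is a newform of level $4$, so $U_2$ kills it, and comparing with the Shimura lifts $\Sh_t(f)$ forces $U_4f=0$; it then deduces $q(\mathcal{U}_1)f=0$ from $\mathcal{T}_1=\sqrt{2}\,\mathcal{U}_1\widehat{\mathcal{U}_2}$ together with $\widetilde{W}_8f=\pm f$, and finally extracts $\widetilde{V}_4f=-f$ from the relation $\widehat{\mathcal{U}_1}^2=1+\mathcal{V}$, extending to all of $S^{-}(8)$ via the eigenbasis. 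You instead prove $\widetilde{V}_4f=-f$ \emph{first} for every $f\in S^{-}(8)$, by upgrading Corollary~\ref{cor:plus} from a containment to an equality: the $+1$-eigenspace of the self-adjoint involution $\widetilde{V}_4$ is exactly $S_{k+1/2}(\Gamma_0(4))$, because $\Gamma_0(4)=\langle\Gamma_0(8),\smallmat{1}{0}{4}{1}\rangle$ and the splitting $\alpha\mapsto\alpha^*$ is a homomorphism, so fixed vectors are precisely the $\Delta_0(4)$-invariant forms; since this eigenspace sits inside $E$, orthogonality gives $\widetilde{V}_4=-1$ on $E^{\perp}$. From there $q(\widehat{\mathcal{U}_1})f=0$ follows from Theorem~\ref{thm:genrelchi_1}(3) and $U_4f=0$ from $\mathcal{T}_1=\mathcal{U}_1*\mathcal{U}_2$. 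Your route buys two things: it dispenses entirely with the Shimura-lift/level-$4$ newform input for $U_4f=0$, and it yields the ``for all $f\in S^{-}(8)$'' statements directly without the eigenform hypothesis or the eigenbasis reduction (the eigenform hypothesis is then used only for $\widetilde{W}_8f=\pm f$, exactly as you note). The one point where you go beyond what the paper records is the exactness of the $+1$-eigenspace identification; your index-$2$ generation argument for it is sound, but it deserves to be stated as a lemma since Corollary~\ref{cor:plus} asserts only the inclusion.
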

\begin{proof}
Let $f\in S_{k+1/2}^{-}(8)$ be a Hecke eigenform under all such $T_{p^2}$. 
Let $g=\widetilde{W}_8(f)$. Since $\widetilde{W}_8$ 
commutes with $T_{p^2}$, $p$ odd, we get that $g$ is an eigenform for all 
$T_{p^2}$ with the same eigenvalues as $f$. 
Since $F:=\psi(f) \in S_{2k}^{\mathrm{new}}(\Gamma_0(4))$ is a newform, 
by \cite{A-L} $\psi(g)$ is a scalar multiple of $\psi(f)$. Thus
$g$ is a a scalar multiple of $f$. Since $\widetilde{W}_8^2 =1$, we 
get the first assertion.

Further by \cite{A-L} since $F$ is a newform of level $4$, $U_2(F)=0$.
Since the Shimura lift~\cite{Shimura} $\Sh_t(f)$ for any square-free $t$ 
is also an eigenform for 
all $T_p$ with the same eigenvalues as $F$ by \cite{A-L} $\Sh_t f$ is a scalar 
multiple of $F$. Thus $\Sh_t(U_4f)= U_2(\Sh_t f) = 0$ 
for all square-free $t$ and hence we get that $U_4f=0$.

Now $0= U_4(f) = q(\mathcal{T}_1)f= 
q( \sqrt{2}\mathcal{U}_1\widehat{\mathcal{U}_2})f$. 
Since $\widetilde{W}_8(f) = \pm f$ we have 
$q(\mathcal{U}_1)f = 0$. As ${\widehat{\mathcal{U}_1}}^2 = 1 + \mathcal{V}$
(Theorem~\ref{thm:genrelchi_1}(1)) we get $\widetilde{V}_4f = -f$. 
Consequently $\widetilde{V}'_4f = -f$.

Since $S_{k+1/2}^{-}(8)$ has a basis of eigenforms under $T_{p^2}$,
it follows for all $f \in S_{k+1/2}^{-}(8)$, we have $U_4f=0$
and $\widetilde{V}_4f = -f = \widetilde{V}'_4f$.
\end{proof}

\begin{thm}\label{thm:8}
Let $f\in S_{k+1/2}(\Gamma_0(8))$. Then 
$f \in S_{k+1/2}^{-}(8) \iff \widetilde{V}_4f = -f = \widetilde{V}'_4f$. 
\end{thm}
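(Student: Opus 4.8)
The forward implication is already established: by Proposition~\ref{prop:u4} every $f\in S_{k+1/2}^{-}(8)$ satisfies $\widetilde{V}_4f=-f=\widetilde{V}'_4f$. So the plan is to prove the converse, and for this I would not try to expand $f$ against the five summands of $E$ directly, since the involutions need not respect that decomposition. Instead I would use the defining property $S_{k+1/2}^{-}(8)=E^{\perp}$ together with the fact, proved earlier in this section, that $\widetilde{V}_4$ and $\widetilde{V}'_4$ are self-adjoint for the Petersson product. Concretely, suppose $\widetilde{V}_4f=-f=\widetilde{V}'_4f$; it suffices to show $f\perp E$, and since $E=S^{+}(4)\oplus\widetilde{W}_4S^{+}(4)\oplus\widetilde{W}_8\widetilde{W}_4S^{+}(4)\oplus S^{-}(4)\oplus\widetilde{W}_8S^{-}(4)$, it is enough to check that $f$ is orthogonal to each of the five summands.

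The crux is the observation that each summand is a $+1$-eigenvector of \emph{one} of the two involutions. First, $S^{+}(4)$ and $S^{-}(4)$ lie in $S_{k+1/2}(\Gamma_0(4))$, on which $\widetilde{V}_4=+1$ by Corollary~\ref{cor:plus}. Next, on $S_{k+1/2}(\Gamma_0(4))$ one has $\widetilde{W}_4=\tfrac{1}{\sqrt2}q(\widehat{\mathcal{U}_1})$ (from the identity $q(\mathcal{U}_1)=2\widetilde{W}_4$ of Corollary~\ref{cor:plus}), so $\widetilde{W}_4S^{+}(4)$ lies in the image of $q(\widehat{\mathcal{U}_1})$; relation (3) of Theorem~\ref{thm:genrelchi_1}, namely $\mathcal{V}\widehat{\mathcal{U}_1}=\widehat{\mathcal{U}_1}$, then gives $\widetilde{V}_4=+1$ on $\widetilde{W}_4S^{+}(4)$ as well. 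Thus $\widetilde{V}_4$ fixes $S^{+}(4)$, $\widetilde{W}_4S^{+}(4)$ and $S^{-}(4)$. For the two remaining summands I would use $\widetilde{V}'_4=\widetilde{W}_8\widetilde{V}_4\widetilde{W}_8=q(\widehat{\mathcal{U}_2}\mathcal{V}\widehat{\mathcal{U}_2})$ (recall $\widehat{\mathcal{U}_2}^2=1$). For $\widetilde{W}_8\widetilde{W}_4S^{+}(4)=\tfrac{1}{\sqrt2}q(\widehat{\mathcal{U}_2}\widehat{\mathcal{U}_1})S^{+}(4)$, relations (2) and (3) collapse the word $\widehat{\mathcal{U}_2}\mathcal{V}\widehat{\mathcal{U}_2}\cdot\widehat{\mathcal{U}_2}\widehat{\mathcal{U}_1}=\widehat{\mathcal{U}_2}\mathcal{V}\widehat{\mathcal{U}_1}=\widehat{\mathcal{U}_2}\widehat{\mathcal{U}_1}$, so $\widetilde{V}'_4=+1$ there; and for $\widetilde{W}_8S^{-}(4)=q(\widehat{\mathcal{U}_2})S^{-}(4)$, using $\widehat{\mathcal{U}_2}^2=1$ and $q(\mathcal{V})=+1$ on $S^{-}(4)\subseteq S_{k+1/2}(\Gamma_0(4))$ gives $\widetilde{V}'_4(\widetilde{W}_8g)=q(\widehat{\mathcal{U}_2}\mathcal{V})g=\widetilde{W}_8g$, so $\widetilde{V}'_4=+1$ there too.

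With these eigenvalue assignments in hand the orthogonality is immediate from self-adjointness: if $x$ belongs to any of the first three summands then $\langle f,x\rangle=\langle f,\widetilde{V}_4x\rangle=\langle\widetilde{V}_4f,x\rangle=-\langle f,x\rangle$, forcing $\langle f,x\rangle=0$; and if $x$ belongs to one of the last two summands the same computation with $\widetilde{V}'_4$ in place of $\widetilde{V}_4$ gives $\langle f,x\rangle=0$. Hence $f\perp E$, i.e. $f\in E^{\perp}=S_{k+1/2}^{-}(8)$, which completes the converse. The only genuine work is the middle paragraph — verifying that each summand is fixed by the appropriate involution — and I expect the one point requiring care to be the bookkeeping of scalars together with the use of $q$ as an algebra homomorphism, so that the relations of Theorem~\ref{thm:genrelchi_1} may be applied to the words $\widehat{\mathcal{U}_2}\mathcal{V}\widehat{\mathcal{U}_2}\widehat{\mathcal{U}_2}\widehat{\mathcal{U}_1}$ and $\widehat{\mathcal{U}_2}\mathcal{V}\widehat{\mathcal{U}_2}\widehat{\mathcal{U}_2}$ exactly as written; once that is granted there is no real obstacle, since self-adjointness then does all the remaining work for free.
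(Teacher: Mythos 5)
Your proposal is correct and follows essentially the same route as the paper: the forward direction is Proposition~\ref{prop:u4}, and the converse checks that the first three summands of $E$ lie in the $+1$ eigenspace of $\widetilde{V}_4$, the last two in the $+1$ eigenspace of $\widetilde{V}'_4$, and then invokes self-adjointness to conclude $f\perp E=S_{k+1/2}^{-}(8)^{\perp\perp}$. The only cosmetic difference is that you verify $\widetilde{V}_4=+1$ on $\widetilde{W}_4S^{+}(4)$ via the relation $\mathcal{V}\widehat{\mathcal{U}_1}=\widehat{\mathcal{U}_1}$, whereas the paper simply uses that $\widetilde{W}_4S^{+}(4)\subseteq S_{k+1/2}(\Gamma_0(4))$, which Corollary~\ref{cor:plus} already places in the $+1$ eigenspace of $\widetilde{V}_4$.
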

\begin{proof}
If $f \in S_{k+1/2}^{-}(8)$ then by Proposition~\ref{prop:u4} the
conditions hold.

Conversely let $\widetilde{V}_4f = -f = \widetilde{V}'_4f$.
Since $S_{k+1/2}(\Gamma_0(4))=S^{+}(4) \oplus \widetilde{W}_4S^{+}(4) \oplus 
S^{-}(4)$ is contained in the $+1$ eigenspace of $\widetilde{V}_4$ and 
$\widetilde{W}_8(\widetilde{W}_4S^{+}(4) \oplus S^{-}(4))$ is contained 
in the $+1$ eigenspace of $\widetilde{V}'_4$ and $\widetilde{V}_4,\  
\widetilde{V}'_4$ are self-adjoint, it follows that 
$f \in S_{k+1/2}^{-}(8)$.
\end{proof}

Note that since $\widetilde{V}'_4$ is self-adjoint, we can write 
$S_{k+1/2}(\Gamma_0(8))$ as a direct sum of $+1$ and $-1$ eigenspaces 
of $\widetilde{V}'_4$. As noted in Corollary~\ref{cor:plus2}, 
$S_{k+1/2}^{+}(8)$ is the $+1$ eigenspace of $\widetilde{V}'_4$, 
let us denote by $S_{k+1/2}^{\mathrm{min}}(8)$ be the $-1$ 
eigenspace of $\widetilde{V}'_4$. In particular,
$S_{k+1/2}^{\mathrm{min}}(8)$ is the subspace of $S_{k+1/2}(\Gamma_0(8))$
consisting of $f= \sum_{n=1}^{\infty}a_nq^n$ such that 
$a_n=0$ for $(-1)^k n \equiv 0, 1 \pmod{4}$.
Further for a given newform $F$ of level dividing $4$, let 
$S_{k+1/2}(8, F)$ denotes the subspace of forms that are 
Shimura-equivalent to $F$ (i.e., forms $f$ that are eigenforms under 
$T_{p^2}$ with the same eigenvalues as $F$ under $T_p$ for all odd primes $p$).
Then we have the following simple observation.

\begin{prop}\label{prop:obs}
\begin{itemize}
 \item[(1)] $S_{k+1/2}^{+}(8) = S^{+}(4) \oplus \widetilde{W}_8 A^{+}(4) \oplus
\widetilde{W}_8 S^{-}(4)$ where $A^{+}(4) = \widetilde{W}_4 S^{+}(4)$.
 \item[(2)] Given a newform $F$ of weight $2k$ and level dividing $4$, 
 there exists a unique Shimura-equivalent form in  
 $S_{k+1/2}(8, F) \cap S_{k+1/2}^{\mathrm{min}}(8)$.
\end{itemize}
\end{prop}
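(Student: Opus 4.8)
The plan is to split $S_{k+1/2}(\Gamma_0(8))$ into simultaneous eigenspaces for the odd Hecke operators $\{T_{p^2}\}_{p\neq 2}$ and to analyse the involution $\widetilde{V}'_4$ one block at a time. Since $\widetilde{V}_4=q(\mathcal{V})$, $\widetilde{W}_8=q(\widehat{\mathcal{U}_2})$, $\widetilde{V}'_4$ and $q(\widehat{\mathcal{U}_1})$ are all built from the $2$-adic Hecke algebra, they commute with every $T_{p^2}$ for odd $p$, so each preserves the block $S_{k+1/2}(8,F)=S[\lambda_F]$ cut out by a system of eigenvalues $\lambda_F$. By Ueda's theorem (Theorem~\ref{T:Ueda}) these blocks match the simultaneous $T_p$-eigenspaces of $S_{2k}(\Gamma_0(4))$, and strong multiplicity one (\cite{A-L}) attaches to each $\lambda_F$ a unique newform $F$ of level $1$, $2$ or $4$, the block having dimension $3$, $2$ or $1$ accordingly. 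Lemma~\ref{lem:Gamma1}, Lemma~\ref{lem:Gamma2} and the isomorphism $\psi\colon S^{-}(8)\cong S^{\mathrm{new}}_{2k}(\Gamma_0(4))$ then place each block inside $S_{k+1/2}(\Gamma_0(8))=E\oplus S^{-}(8)$: a level-$1$ block lies in $S^{+}(4)\oplus\widetilde{W}_4 S^{+}(4)\oplus\widetilde{W}_8\widetilde{W}_4 S^{+}(4)$, a level-$2$ block in $S^{-}(4)\oplus\widetilde{W}_8 S^{-}(4)$, and a level-$4$ block in $S^{-}(8)$.

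Next I would compute $\widetilde{V}'_4=\widetilde{W}_8\widetilde{V}_4\widetilde{W}_8$ on each block in an adapted basis. For a level-$1$ block I fix a Kohnen form $a\in S^{+}(4)$ with the given eigenvalues and set $b=\widetilde{W}_4 a$, $c=\widetilde{W}_8 b$, so that $\{a,b,c\}$ is a basis. Corollary~\ref{cor:plus} gives $\widetilde{V}_4 a=a$ and $\widetilde{V}_4 b=b$, Proposition~\ref{prop:dsum}(1) gives $\widetilde{W}_8 a=a$, and $\widetilde{W}_8^2=1$ gives $\widetilde{W}_8 c=b$, so the only unknown is $\widetilde{V}_4 c$. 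Applying the braid relation Theorem~\ref{thm:genrelchi_1}(4) (i.e. $\widetilde{W}_8\widetilde{V}_4\widetilde{W}_8=\widetilde{V}_4\widetilde{W}_8\widetilde{V}_4$) to $b$ forces $\widetilde{V}_4 c$ to be $\widetilde{W}_8$-fixed, and then $\widetilde{V}_4^2=1$ fixes its $\{b,c\}$-part, giving $\widetilde{V}_4 c=\alpha a-b-c$ for some scalar $\alpha$. The value of $\alpha$ is irrelevant: the characteristic polynomial of $\widetilde{V}'_4$ on the block is $(t-1)^2(t+1)$ for every $\alpha$, so the $+1$-eigenspace is spanned by $a$ and $c$ and the $-1$-eigenspace is a single line. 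The same computation on a level-$2$ block (basis $d\in S^{-}(4)$, $f=\widetilde{W}_8 d$) yields $\widetilde{V}_4 f=-d-f$ and eigenvalues $+1,-1$ with $f$ spanning the $+1$-line, while a level-$4$ block is governed by Proposition~\ref{prop:u4}, where $\widetilde{V}'_4=-1$. Equivalently one records only $\mathrm{tr}(\widetilde{V}'_4\vert\,\mathrm{block})=\mathrm{tr}(\widetilde{V}_4\vert\,\mathrm{block})=1,0,-1$ respectively, which already fixes the $\pm1$ multiplicities.

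Summing the $+1$-eigenspaces over all blocks gives part (1): a level-$1$ block contributes $S^{+}(4)[\lambda]\oplus\widetilde{W}_8\widetilde{W}_4 S^{+}(4)[\lambda]$, a level-$2$ block contributes $\widetilde{W}_8 S^{-}(4)[\lambda]$, and level-$4$ blocks contribute nothing. Because $S^{+}(4)$ and $\widetilde{W}_8\widetilde{W}_4 S^{+}(4)$ carry only level-$1$ systems while $\widetilde{W}_8 S^{-}(4)$ carries only level-$2$ systems, the total $+1$-eigenspace is exactly $S^{+}(4)\oplus\widetilde{W}_8 A^{+}(4)\oplus\widetilde{W}_8 S^{-}(4)$, which equals $S_{k+1/2}^{+}(8)$ by Corollary~\ref{cor:plus2}. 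Part (2) is then immediate from the same tables: in every block the $-1$-eigenspace of $\widetilde{V}'_4$, namely $S_{k+1/2}(8,F)\cap S_{k+1/2}^{\mathrm{min}}(8)$, is exactly one-dimensional, so for each newform $F$ of level dividing $4$ there is a unique Shimura-equivalent form in the minus space.

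The step I expect to be the real obstacle is the computation of $\widetilde{V}_4$ on the vectors $c=\widetilde{W}_8\widetilde{W}_4 a$ and $f=\widetilde{W}_8 d$ that leave level $4$, since these are precisely the directions on which $\widetilde{V}_4$ is not the identity and the only available leverage is the abstract relations of Theorem~\ref{thm:genrelchi_1}. The point to get right is that the braid relation together with $\widetilde{V}_4^2=1$ pin down the trace of $\widetilde{V}_4$ on each block even though the off-diagonal entry $\alpha$ is never determined. Some care is also needed to justify the block decomposition itself, namely that distinct newforms of level dividing $4$ have distinct $T_{p^2}$-eigenvalue systems at odd $p$ (strong multiplicity one) and that $\widetilde{W}_4$ and $\widetilde{W}_8$ preserve each system.
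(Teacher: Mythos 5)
Your proposal is correct, and it reaches the conclusion by a genuinely different organization of the same ingredients. The paper argues globally: it sets $S=S^{+}(4)\oplus\widetilde{W}_8 A^{+}(4)\oplus\widetilde{W}_8 S^{-}(4)$ and $R=A^{+}(4)\oplus S^{-}(4)\oplus S^{-}(8)$, observes via Corollary~\ref{cor:plus} and Proposition~\ref{prop:dsum}(1) that $S$ lies in the $+1$-eigenspace of $\widetilde{V}'_4$, and then shows $R\cap S_{k+1/2}^{+}(8)=\{0\}$ by a single computation with $\widetilde{V}'_4=\widetilde{V}_4\widetilde{W}_8\widetilde{V}_4$ and the direct-sum decomposition, concluding by a dimension count; part (2) is then deduced from part (1) by counting Shimura-equivalent forms in $S_{k+1/2}^{+}(8)$ via Lemmas~\ref{lem:Gamma1} and \ref{lem:Gamma2}. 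You instead diagonalize the odd Hecke operators, cut $S_{k+1/2}(\Gamma_0(8))$ into blocks of dimension $3$, $2$, $1$ (justified by Theorem~\ref{T:Ueda} and Atkin--Lehner), and compute the full matrix of $\widetilde{V}'_4$ on each block from the braid relation of Theorem~\ref{thm:genrelchi_1}(4) together with $\widetilde{V}_4^2=\widetilde{W}_8^2=1$; your identities $\widetilde{V}_4 c=\alpha a-b-c$ and $\widetilde{V}_4 f=-d-f$ check out, and they give both parts of the proposition at once, with (2) falling out immediately rather than through a separate counting step. What your route buys is more information (the explicit action of $\widetilde{V}_4$ in the directions leaving level $4$, and the trace formula $\mathrm{tr}(\widetilde{V}'_4)=1,0,-1$ on the three block types); what it costs is the need to invoke simultaneous diagonalizability of the $T_{p^2}$ and multiplicity-one inputs up front, which the paper's complement argument sidesteps. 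Two small points of hygiene: the characteristic polynomial $(t-1)^2(t+1)$ alone would not rule out a Jordan block, so you should say explicitly that $\widetilde{V}'_4$ is an involution (hence semisimple) or, as you in effect do, exhibit the $+1$-eigenspace as exactly $\{y=0\}$ in the basis $(a,b,c)$; and the uniqueness in (2) should be read, as in the paper, as uniqueness up to scalar, i.e.\ one-dimensionality of $S_{k+1/2}(8,F)\cap S_{k+1/2}^{\mathrm{min}}(8)$.
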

\begin{proof}
Let $S:=S^{+}(4) \oplus \widetilde{W}_8 A^{+}(4) \oplus
\widetilde{W}_8 S^{-}(4)$, $R:=A^{+}(4) \oplus S^{-}(4) \oplus S^{-}(8)$.
It follows from Corollary~\ref{cor:plus} that $S \subseteq S_{k+1/2}^{+}(8)$. To prove equality 
it is enough to show that $R \cap S_{k+1/2}^{+}(8) = \{0\}$. 
Let $f+g+h$ belongs to the intersection where 
$f \in A^{+}(4),\ g\in  S^{-}(4),\ h\in S^{-}(8)$. Thus 
$\widetilde{V}'_4(f+g+h) = f+g+h$. Since 
$\widetilde{V}'_4 = \widetilde{V}_4\widetilde{W}_8\widetilde{V}_4$ and 
by Corollary~\ref{cor:plus}, it follows that 
$\widetilde{W}_8 f + \widetilde{W}_8g = f + g + 2 \widetilde{V}_4 h$. 
Since $\widetilde{V}_4$ preserves $S^{-}(8)$ and as each of the term 
in the above relation is in the direct summand, we are done. 

For $(2)$, since $T_{p^2}$ for odd prime $p$ commutes with $\widetilde{V}'_4$, 
we get that $\widetilde{V}'_4$ preserves the space $S_{k+1/2}(8, F)$. Now  
it follows from $(1)$ and Lemma \ref{lem:Gamma1}, Lemma \ref{lem:Gamma2} 
that for a weight $2k$ newform of level $1$ there are two 
Shimura-equivalent forms in the space 
$S_{k+1/2}^{+}(8)$ while for a weight $2k$ newform 
of level $2$ there is precisely one Shimura-equivalent form in 
$S_{k+1/2}^{+}(8)$. Consequently using dimension equality we obtain $(2)$.
The case of newform of level $4$ is already done in Theorem~\ref{thm:8}.
\end{proof}

We now define the minus space at level $8M$ for $M$ odd square-free.
Let $1\ne M=p_1p_2\cdots  p_k$ and 
for each $i=1, \ldots k$ define $M_{i}=M/p_i$. 
Note that by \cite[Corollary 4.3 (4)]{B-PII} 
$S_{k+1/2}(\Gamma_0(8M_i))$ is contained in the $p_i$ eigenspace 
of $\widetilde{Q}_{p_i}$. 
Now following the proof of \cite[Proposition 6.4]{B-PII} 
we obtain that
\begin{prop}
$S_{k+1/2}(\Gamma_0(8M_i)) \bigcap 
\widetilde{W}_{p_i^2}S_{k+1/2}(\Gamma_0(8M_i))=\{0\}$.
\end{prop}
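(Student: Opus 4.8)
The plan is to separate the two summands by exhibiting them inside distinct eigenspaces of the Hecke operator $\widetilde{Q}_{p_i}$, and then invoke the elementary fact that eigenspaces for distinct eigenvalues of a single operator meet only in $\{0\}$. The starting point is the sentence preceding the statement, namely the application of \cite[Corollary 4.3 (4)]{B-PII}, which tells us that $S_{k+1/2}(\Gamma_0(8M_i))$ is contained in the $p_i$-eigenspace of $\widetilde{Q}_{p_i}$. Thus the whole task reduces to locating $\widetilde{W}_{p_i^2}S_{k+1/2}(\Gamma_0(8M_i))$ inside an eigenspace of $\widetilde{Q}_{p_i}$ for an eigenvalue different from $p_i$.

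First I would recall, from the local analysis at the odd prime $p_i$ carried out in \cite{B-PII}, how the operator $\widetilde{W}_{p_i^2}$ interacts with $\widetilde{Q}_{p_i}$: conjugation by $\widetilde{W}_{p_i^2}$ sends $\widetilde{Q}_{p_i}$ to its companion operator $\widetilde{Q}'_{p_i}$, which at the level of operators reads $\widetilde{Q}_{p_i}\widetilde{W}_{p_i^2} = \widetilde{W}_{p_i^2}\widetilde{Q}'_{p_i}$. Consequently, for any $f \in S_{k+1/2}(\Gamma_0(8M_i))$,
\[
\widetilde{Q}_{p_i}\bigl(\widetilde{W}_{p_i^2}f\bigr) = \widetilde{W}_{p_i^2}\bigl(\widetilde{Q}'_{p_i}f\bigr),
\]
so the action of $\widetilde{Q}_{p_i}$ on $\widetilde{W}_{p_i^2}S_{k+1/2}(\Gamma_0(8M_i))$ is dictated by the action of the companion operator $\widetilde{Q}'_{p_i}$ on $S_{k+1/2}(\Gamma_0(8M_i))$ itself.

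Next I would pin down this companion eigenvalue by means of the relation tying $\widetilde{Q}_{p_i}$ and $\widetilde{Q}'_{p_i}$ together (again coming from the generators and relations of the level-$p_i$ Hecke algebra of $\DSL_2(\Q_{p_i})$ in \cite{B-PII}). Since $\widetilde{Q}_{p_i}$ acts on $S_{k+1/2}(\Gamma_0(8M_i))$ as the scalar $p_i$, that relation forces $\widetilde{Q}'_{p_i}$ to act on the same space by a scalar $\lambda$ with $\lambda \ne p_i$ (in the relevant normalisation one finds $\lambda = -1$, and $-1 \ne p_i$ because $p_i$ is an odd prime). By the displayed identity this places $\widetilde{W}_{p_i^2}S_{k+1/2}(\Gamma_0(8M_i))$ in the $\lambda$-eigenspace of $\widetilde{Q}_{p_i}$, which is disjoint from the $p_i$-eigenspace housing $S_{k+1/2}(\Gamma_0(8M_i))$; hence the intersection is $\{0\}$.

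The one point that requires care, and the main obstacle, is confirming that the operator identities established at the odd prime $p_i$ in \cite{B-PII} at level $4M$ persist verbatim here at level $8M$: passing from $4M_i$ to $8M_i$ only enlarges the $2$-part of the level, so the local picture at $p_i$, and with it the action of $\widetilde{Q}_{p_i}$, $\widetilde{Q}'_{p_i}$ and $\widetilde{W}_{p_i^2}$ together with the eigenvalue $p_i$ on the $p_i$-old space, should be unchanged. Granting this stability, the argument is a direct transcription of the proof of \cite[Proposition 6.4]{B-PII}.
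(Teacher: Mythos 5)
Your reduction is set up correctly through the identity $\widetilde{Q}_{p_i}\bigl(\widetilde{W}_{p_i^2}f\bigr)=\widetilde{W}_{p_i^2}\bigl(\widetilde{Q}'_{p_i}f\bigr)$, but the pivotal claim --- that the relations of the local Hecke algebra force $\widetilde{Q}'_{p_i}$ to act on all of $S_{k+1/2}(\Gamma_0(8M_i))$ by a single scalar $\lambda=-1$ --- is unjustified and in fact false. The local Hecke algebra at the odd prime $p_i$ is an affine Hecke algebra on two generators; each generator satisfies its own quadratic relation (with eigenvalues $p_i$ and $-1$), but no relation forces one generator to preserve, let alone act as a scalar on, the $p_i$-eigenspace of the other. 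Concretely, if your claim held, then on the two-dimensional oldclass spanned by $f$ and $\widetilde{W}_{p_i^2}f$ (with $f$ attached to a newform $F$ of level prime to $p_i$) the operators $\widetilde{Q}_{p_i}$ and $\widetilde{Q}'_{p_i}$ would be simultaneously diagonal with eigenvalue pairs $(p_i,-1)$ and $(-1,p_i)$, so that $\widetilde{Q}_{p_i}+\widetilde{Q}'_{p_i}=(p_i-1)\cdot\mathrm{id}$ independently of $F$; but this combination is built out of $U_{p_i^2}$, $\widetilde{W}_{p_i^2}$ and their adjoints, and its trace on the oldclass varies with the $T_{p_i}$-eigenvalue of $F$. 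Equivalently, the oldclass is generically an irreducible two-dimensional module for the local algebra (an unramified principal series with a two-dimensional space of $\ov{K_0^{p_i}(p_i)}$-fixed vectors), on which the two generators do not commute. So the two summands do \emph{not} lie in distinct eigenspaces of the single operator $\widetilde{Q}_{p_i}$, and the eigenvalue-separation shortcut collapses.

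What your computation legitimately gives is that any $f$ in the intersection, being of the form $\widetilde{W}_{p_i^2}g$ with $g$ old at $p_i$, satisfies both $\widetilde{Q}_{p_i}f=p_if$ and $\widetilde{Q}'_{p_i}f=\widetilde{W}_{p_i^2}\widetilde{Q}_{p_i}g=p_if$, i.e.\ $f$ is a simultaneous $p_i$-eigenvector of the generator and of its $\widetilde{W}_{p_i^2}$-conjugate. That is the correct starting point, and it is where the paper's own argument (which simply runs the proof of \cite[Proposition 6.4]{B-PII} again, the local picture at $p_i$ being unchanged by enlarging the $2$-part of the level --- your stability remark is fine) actually begins. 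The missing content is the mechanism that rules out such a common $p_i$-eigenvector: one must use the further relations expressing $\widetilde{Q}_{p_i}$ and $\widetilde{Q}'_{p_i}$ in terms of $U_{p_i^2}$ and $\widetilde{W}_{p_i^2}$ to show that a nonzero such $f$ would force a Hecke eigenvalue (equivalently, a local component that is one-dimensional or non-tempered) incompatible with cuspidality. Compare the paper's proof of Proposition~\ref{prop:dsum}(3), which in the analogous situation at $2$ does not place the conjugated space in a different eigenspace of the same operator, but instead extracts a contradiction from a common eigenvector of $\widetilde{V}_4$ and $\widetilde{V}'_4$ via the braid relation. An argument of that kind is what your proposal needs and currently lacks.
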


Using Atkin-Lehner~\cite{A-L} and dimension equality we have 
the following.
\begin{cor} \label{C:imap}
$\psi$ maps
$S_{k+1/2}(\Gamma_0(8M_i))\oplus 
\widetilde{W}_{p_i^2}S_{k+1/2}(\Gamma_0(8M_i))$
isomorphically onto $S_{2k}(\Gamma_0(4M_i))\oplus 
V(p_i) S_{2k}(\Gamma_0(4M_i))$.
\end{cor}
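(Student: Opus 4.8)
The plan is to reduce the claim to a dimension count together with a containment. First I would record the two dimensions. By the preceding Proposition the sum on the left is direct, so
\[
\dim\bigl(S_{k+1/2}(\Gamma_0(8M_i))\oplus \widetilde{W}_{p_i^2}S_{k+1/2}(\Gamma_0(8M_i))\bigr)=2\dim S_{k+1/2}(\Gamma_0(8M_i)).
\]
On the integral side the degeneracy operator $V(p_i)\colon g(z)\mapsto g(p_iz)$ is injective from $S_{2k}(\Gamma_0(4M_i))$ into $S_{2k}(\Gamma_0(4M))$, and by Atkin--Lehner~\cite{A-L} the two degeneracy maps from level $4M_i$ into level $4M=4M_ip_i$ have linearly independent images; hence the right-hand sum is also direct, of dimension $2\dim S_{2k}(\Gamma_0(4M_i))$. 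Applying Ueda's theorem (Theorem~\ref{T:Ueda}) with the odd square-free integer $M_i$ in place of $M$ gives $\dim S_{k+1/2}(\Gamma_0(8M_i))=\dim S_{2k}(\Gamma_0(4M_i))$, so the left- and right-hand spaces have the same dimension.

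Since $\psi$ is an isomorphism on all of $S_{k+1/2}(\Gamma_0(8M))$, it is injective on the left-hand subspace. Thus it is enough to prove the containment
\[
\psi\bigl(S_{k+1/2}(\Gamma_0(8M_i))\oplus \widetilde{W}_{p_i^2}S_{k+1/2}(\Gamma_0(8M_i))\bigr)\subseteq S_{2k}(\Gamma_0(4M_i))\oplus V(p_i)S_{2k}(\Gamma_0(4M_i)),
\]
for then the equal dimensions force equality and $\psi$ restricts to the asserted isomorphism. I would verify the containment one Hecke eigenform at a time, which suffices because the commuting self-adjoint operators $T_{p^2}$ ($p$ odd) diagonalize $S_{k+1/2}(\Gamma_0(8M_i))$ and $\widetilde{W}_{p_i^2}$ commutes with $T_{p^2}$ for $p\neq p_i$.

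Concretely, for a $T_{p^2}$-eigenform $f\in S_{k+1/2}(\Gamma_0(8M_i))$ the Hecke-equivariance $T_p\psi(f)=\psi(T_{p^2}f)$ of Theorem~\ref{T:Ueda}, together with the Shimura correspondence, identifies $\psi(f)$ with a member of the oldspace at level $4M$ of a newform $F$ whose level divides $4M_i$, in particular is prime to $p_i$. The full oldspace of such an $F$ inside $S_{2k}(\Gamma_0(4M))$ decomposes as its oldspace inside $S_{2k}(\Gamma_0(4M_i))$ together with the $V(p_i)$-image of that oldspace, and so lies in the right-hand space; the same applies to $\widetilde{W}_{p_i^2}f$, which is a $T_{p^2}$-eigenform ($p\neq p_i$) with the same eigenvalues and hence attached to the same $F$. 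By linearity the containment follows, and with it the corollary.

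The step I expect to be the main obstacle is this last containment: locating $\psi(f)$ and $\psi(\widetilde{W}_{p_i^2}f)$ inside the two-dimensional $p_i$-oldspace of the relevant newform. This rests on strong multiplicity one and the Atkin--Lehner description of oldforms, and is the level-$8M$ analogue of the argument carried out for level $4M$ in \cite{B-PII}; once the eigenform statement is established, the passage to the whole space is immediate.
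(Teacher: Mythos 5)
Your argument is correct and is essentially the paper's own: the authors prove the preceding proposition to get directness on the half-integral side and then invoke exactly "Atkin--Lehner and dimension equality," which is the containment-plus-dimension-count you spell out (with the containment coming from Hecke equivariance of $\psi$ and the Atkin--Lehner description of the $F$-isotypic oldspaces for newforms $F$ of level dividing $4M_i$). Your write-up just makes explicit the details the paper leaves to the reader.
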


Let $S^{+,\mathrm{new}}_{k+1/2}(4M)$ be the new space 
inside the Kohnen plus subspace of $S_{k+1/2}(4M)$
and $S^{-}_{k+1/2}(4M)$ as defined in \cite{B-PII}. Then 
by Proposition~\ref{prop:dsum} and Atkin-Lehner we similarly have
\begin{cor} \label{C:2map}
$\psi$ maps
$S^{+,\mathrm{new}}_{k+1/2}(4M) \oplus 
\widetilde{W}_4S^{+,\mathrm{new}}_{k+1/2}(4M) \oplus 
\widetilde{W}_8\widetilde{W}_4S^{+,\mathrm{new}}_{k+1/2}(4M)$ 
isomorphically onto 
$S^{\mathrm{new}}_{2k}(\Gamma_0(M)) \oplus 
V(2) S^{\mathrm{new}}_{2k}(\Gamma_0(M))
\oplus V(4) S^{\mathrm{new}}_{2k}(\Gamma_0(M))$.
\end{cor}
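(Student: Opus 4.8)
The plan is to run the proof of Lemma~\ref{lem:Gamma1} essentially verbatim, now carrying the ``new'' qualifier through and allowing the odd square-free level $M$. The three ingredients are the directness statement of Proposition~\ref{prop:dsum}(2), the Hecke-equivariance of Ueda's map $\psi$ from Theorem~\ref{T:Ueda}, and a dimension count fed by Kohnen's theory of the new plus space together with Atkin--Lehner~\cite{A-L}. First I would check that the left-hand side is a genuine direct sum: since $S^{+,\mathrm{new}}_{k+1/2}(4M)\subseteq S^{+}_{k+1/2}(4M)$ and the operators $\widetilde{W}_4,\ \widetilde{W}_8$ are injective, the directness of $S^{+}_{k+1/2}(4M)+\widetilde{W}_4S^{+}_{k+1/2}(4M)+\widetilde{W}_8\widetilde{W}_4S^{+}_{k+1/2}(4M)$ supplied by Proposition~\ref{prop:dsum}(2) restricts to directness of the three new summands. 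Hence the left-hand side has dimension $3\dim S^{+,\mathrm{new}}_{k+1/2}(4M)$.

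The core of the argument is to match the summands under $\psi$. Because $\widetilde{W}_4$ and $\widetilde{W}_8$ are $2$-adic operators, they commute with every $T_{p^2}$ for $p\nmid 2M$, so all three summands consist of forms whose $T_{p^2}$-eigensystems are exactly those of the new plus forms, i.e.\ Shimura-equivalent to newforms of level exactly $M$. Since $\psi$ intertwines $T_{p^2}$ with $T_p$ (Theorem~\ref{T:Ueda}), the image of the left-hand side lies in the span of those $g\in S_{2k}(\Gamma_0(4M))$ whose $T_p$-eigensystems $(p\nmid 2M)$ come from a level-$M$ newform. By Atkin--Lehner this span is precisely the block
\[
S^{\mathrm{new}}_{2k}(\Gamma_0(M)) \oplus V(2)S^{\mathrm{new}}_{2k}(\Gamma_0(M)) \oplus V(4)S^{\mathrm{new}}_{2k}(\Gamma_0(M)),
\]
the divisors $1,2,4$ of $4M/M=4$ indexing the degeneracy maps, and it is itself a direct sum of three copies of $S^{\mathrm{new}}_{2k}(\Gamma_0(M))$ by the linear independence of the degeneracy embeddings. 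Thus $\psi$ carries the left-hand side into the right-hand side.

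To conclude I would invoke the dimension count. Kohnen's isomorphism for the new plus space at odd square-free level~\cite{Kohnen1,Kohnen2} gives $\dim S^{+,\mathrm{new}}_{k+1/2}(4M)=\dim S^{\mathrm{new}}_{2k}(\Gamma_0(M))$, so both sides have dimension $3\dim S^{\mathrm{new}}_{2k}(\Gamma_0(M))$. As $\psi$ is injective (being a linear isomorphism on all of $S_{k+1/2}(\Gamma_0(8M))$) and sends a space into one of equal dimension, it is an isomorphism onto, and it is Hecke-equivariant by Theorem~\ref{T:Ueda}.

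The step I expect to be most delicate is the identification in the second paragraph, namely that the image of the left-hand side is the \emph{full} level-$M$ newform block and nothing more: one must be certain that no oldforms attached to newforms of proper divisors of $M$ intrude, and that the three $2$-power degeneracy images each receive exactly one summand rather than being permuted or mixed. This is precisely where the ``new'' restriction and the Atkin--Lehner multiplicity-one statements at the primes dividing $M$ carry the argument, in contrast to the level-$1$ case of Lemma~\ref{lem:Gamma1} where this bookkeeping is vacuous.
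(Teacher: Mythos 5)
Your proposal is correct and follows essentially the same route as the paper, which proves this corollary by exactly the combination you describe: the directness supplied by Proposition~\ref{prop:dsum}(2) restricted to the new plus subspace, the Hecke-equivariance of $\psi$ from Theorem~\ref{T:Ueda}, Atkin--Lehner to identify the level-$M$ newform block, and the dimension equality $\dim S^{+,\mathrm{new}}_{k+1/2}(4M)=\dim S^{\mathrm{new}}_{2k}(\Gamma_0(M))$. Your expanded discussion of why no eigensystems from proper divisors of $M$ can intrude is a useful elaboration of what the paper leaves implicit, but it is not a different argument.
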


\begin{cor} \label{C:3map}
$\psi$ maps
$S^{-}_{k+1/2}(4M) \oplus 
\widetilde{W}_8S^{-}_{k+1/2}(4M)$ 
isomorphically onto 
$S^{\mathrm{new}}_{2k}(\Gamma_0(2M)) \oplus 
V(2) S^{\mathrm{new}}_{2k}(\Gamma_0(2M))$.
\end{cor}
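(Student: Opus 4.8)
The plan is to exploit that $\psi$ is a vector space isomorphism of the full spaces (Theorem~\ref{T:Ueda}) and hence injective on every subspace, so that it suffices to show $\psi$ carries the genuine direct sum $S^{-}_{k+1/2}(4M)\oplus\widetilde{W}_8S^{-}_{k+1/2}(4M)$ \emph{into} the target and that the two sides have equal dimension. That the sum is direct is exactly Proposition~\ref{prop:dsum}(3), so its dimension equals $2\dim S^{-}_{k+1/2}(4M)$. By the results of \cite{B-PII} defining $S^{-}_{k+1/2}(4M)$ (the case $M=1$ being Lemma~\ref{lem:Gamma2}), this minus space is Hecke isomorphic to $S^{\mathrm{new}}_{2k}(\Gamma_0(2M))$, so the left-hand dimension is $2\dim S^{\mathrm{new}}_{2k}(\Gamma_0(2M))$. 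The right-hand side is likewise a direct sum of two copies, since $V(2)$ is injective on cusp forms and $S^{\mathrm{new}}_{2k}(\Gamma_0(2M))\cap V(2)S^{\mathrm{new}}_{2k}(\Gamma_0(2M))=\{0\}$ by Atkin--Lehner theory, so both sides have the same dimension.

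Next I would pin down the image using eigensystems at odd primes. Since $\widetilde{W}_8$ commutes with every $T_{p^2}$ for $p$ odd (as used in Proposition~\ref{prop:u4}), the space $\widetilde{W}_8S^{-}_{k+1/2}(4M)$ carries exactly the same $T_{p^2}$-eigensystems as $S^{-}_{k+1/2}(4M)$; by the Hecke equivariance $\psi T_{p^2}=T_p\psi$ for $p\nmid 2M$, these correspond to the $T_p$-eigensystems of level-$2M$ newforms. Thus every form in $\psi\big(S^{-}_{k+1/2}(4M)\oplus\widetilde{W}_8S^{-}_{k+1/2}(4M)\big)$ has, for all $p\nmid 2M$, the $T_p$-eigensystem of some newform $F\in S^{\mathrm{new}}_{2k}(\Gamma_0(2M))$. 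By strong multiplicity one (\cite{A-L}) such an $F$ is the unique newform of \emph{any} level with these eigenvalues, so the full generalized eigenspace of this system inside $S_{2k}(\Gamma_0(4M))$ is precisely the oldclass of $F$, which is two-dimensional and spanned by $F$ and $V(2)F$ since $4M/2M=2$ is prime. Summing over $F$ shows the image is contained in $S^{\mathrm{new}}_{2k}(\Gamma_0(2M))\oplus V(2)S^{\mathrm{new}}_{2k}(\Gamma_0(2M))$.

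Finally, combining the injectivity of $\psi$, the containment just established, and the equality of dimensions forces $\psi$ to be an isomorphism onto the stated target, exactly the bookkeeping already used for Corollaries~\ref{C:imap} and~\ref{C:2map}. The one step requiring genuine care, rather than routine dimension counting, is the eigensystem identification: I must ensure that matching $T_p$-eigenvalues only for $p\nmid 2M$ still isolates a single level-$2M$ newform together with its two-dimensional oldclass, which is precisely where strong multiplicity one and the Atkin--Lehner theory of \cite{A-L} are indispensable, since otherwise an eigensystem from level $M$ or a genuine level-$4M$ newform could in principle intrude.
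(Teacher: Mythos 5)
Your proposal is correct and follows essentially the same route as the paper: the paper derives this corollary from Proposition~\ref{prop:dsum}(3) (directness of the sum), the Hecke isomorphism $S^{-}_{k+1/2}(4M)\cong S^{\mathrm{new}}_{2k}(\Gamma_0(2M))$ from \cite{B-PII}, Atkin--Lehner theory, and a dimension count, exactly as in Lemma~\ref{lem:Gamma2} for $M=1$. Your expanded eigensystem/strong-multiplicity-one argument is just the careful spelling-out of the "Atkin--Lehner and dimension equality" step the paper leaves implicit.
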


We note the following observation. 
\begin{remark}
Since $S^{-}_{k+1/2}(4M)$ is contained in the $+ 1$ eigenspace
of $\widetilde{V}_4$, $\widetilde{W}_8S^{-}_{k+1/2}(4M)$ 
is contained in the  $+ 1$ eigenspace
of $\widetilde{V}'_4$ and hence is contained inside $S_{k+1/2}^{+}(8M)$. 
In \cite{U-Y}, Ueda-Yamana defined a newspace inside $S_{k+1/2}^{+}(8M)$ and 
proved that it is Hecke isomorphic to $S^{\mathrm{new}}_{2k}(\Gamma_0(2M))$. 
Using the above corollary and following Proposition~\ref{prop:obs} we 
see that the plus newspace identified by \cite{U-Y} is the space 
$\widetilde{W}_8S^{-}_{k+1/2}(4M)$. 
Note that $\widetilde{V}'_4$ does 
not preserve the space $S_{k+1/2}(\Gamma_0(4M))$ and so we do not 
expect Fourier coefficient condition for $S^{-}_{k+1/2}(4M)$, as 
it was also observed in \cite{B-PII}. 
\end{remark}

Now let $B_i=S_{k+1/2}(\Gamma_0(8M_i)) \oplus 
\widetilde{W}_{p_i^2}S_{k+1/2}(\Gamma_0(8M_i))$, 
$i=1,\ldots k$. Define
\begin{equation*}
\begin{split}
E=\sum_{i=1}^k B_i \oplus &
(S^{+,\mathrm{new}}_{k+1/2}(4M) \oplus 
\widetilde{W}_4S^{+,\mathrm{new}}_{k+1/2}(4M) \oplus 
\widetilde{W}_8\widetilde{W}_4S^{+,\mathrm{new}}_{k+1/2}(4M))\\
&\oplus S^{-}_{k+1/2}(4M) \oplus 
\widetilde{W}_8S^{-}_{k+1/2}(4M).
\end{split}
\end{equation*}
\begin{prop}
Under $\psi$ the space $E$ maps isomorphically onto the old space 
$S_{2k}^{\mathrm{old}}(\Gamma_0(4M))$.
\end{prop}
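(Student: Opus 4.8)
The plan is to decompose the old space $S_{2k}^{\mathrm{old}}(\Gamma_0(4M))$ into Atkin--Lehner isotypic pieces and to match each piece against the image under $\psi$ of one of the summands making up $E$. By Atkin--Lehner theory \cite{A-L}, writing $\mathcal{O}_F \subseteq S_{2k}(\Gamma_0(4M))$ for the span of the degeneracy images $V(e)F$, $e \mid 4M/L$, of a newform $F$ of level $L \mid 4M$, one has a direct sum decomposition $S_{2k}(\Gamma_0(4M)) = \bigoplus_{L \mid 4M}\bigoplus_{F} \mathcal{O}_F$, and $S_{2k}^{\mathrm{old}}(\Gamma_0(4M))$ is the direct sum of those $\mathcal{O}_F$ with $L \ne 4M$. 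Since $\psi$ is a vector space isomorphism of the full spaces (Theorem~\ref{T:Ueda}), it is automatically injective on $E$; hence it suffices to prove $\psi(E) = S_{2k}^{\mathrm{old}}(\Gamma_0(4M))$, after which the asserted directness of $E$ will follow by pulling the Atkin--Lehner direct sum back through $\psi$.

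The heart of the matching is a bookkeeping of newform levels. First I would record, using Corollary~\ref{C:imap} and linearity of $\psi$, that $\psi(\sum_i B_i) = \sum_i\bigl(S_{2k}(\Gamma_0(4M_i)) + V(p_i)S_{2k}(\Gamma_0(4M_i))\bigr)$; since the two degeneracy maps from level $4M_i$ to level $4M$ are $V(1)$ and $V(p_i)$ (because $4M/4M_i = p_i$ and, $M$ being squarefree, $p_i \nmid 4M_i$), this summand equals the direct sum of the full isotypic pieces $\mathcal{O}_F$ over those newforms whose level $L$ divides $4M_i$ for some $i$. The key elementary observation is that, writing $L = 2^a\delta$ with $\delta \mid M$, one has $L \mid 4M_i$ for some $i$ precisely when $\delta$ is a proper divisor of $M$, i.e. precisely when $L \notin \{M, 2M, 4M\}$; this is because $4M/L = 2^{2-a}(M/\delta)$ is divisible by some odd prime $p_i$ exactly when $M/\delta > 1$. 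Thus $\psi(\sum_i B_i)$ is the span of all $\mathcal{O}_F$ with $L \notin\{M,2M,4M\}$.

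Next I would account for the two remaining level classes. Corollary~\ref{C:2map} shows that $\psi$ carries the triple $S^{+,\mathrm{new}}_{k+1/2}(4M)\oplus \widetilde{W}_4 S^{+,\mathrm{new}}_{k+1/2}(4M)\oplus \widetilde{W}_8\widetilde{W}_4 S^{+,\mathrm{new}}_{k+1/2}(4M)$ onto $S^{\mathrm{new}}_{2k}(\Gamma_0(M))\oplus V(2)S^{\mathrm{new}}_{2k}(\Gamma_0(M))\oplus V(4)S^{\mathrm{new}}_{2k}(\Gamma_0(M))$, which is exactly the direct sum of $\mathcal{O}_F$ over newforms of level $M$ (the three degeneracy maps $V(1),V(2),V(4)$ exhaust the divisors of $4M/M = 4$). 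Likewise Corollary~\ref{C:3map} shows $\psi$ sends $S^{-}_{k+1/2}(4M)\oplus \widetilde{W}_8 S^{-}_{k+1/2}(4M)$ onto the direct sum of $\mathcal{O}_F$ over newforms of level $2M$ (maps $V(1),V(2)$, since $4M/2M = 2$). Combining the three computations, $\psi(E)$ is the direct sum of $\mathcal{O}_F$ over all newform levels $L$ dividing $4M$ with $L \ne 4M$, which is precisely $S_{2k}^{\mathrm{old}}(\Gamma_0(4M))$.

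Finally I would conclude. Since the three families of newform levels covered above --- namely $L\notin\{M,2M,4M\}$, $L = M$, and $L = 2M$ --- are pairwise disjoint, their Atkin--Lehner isotypic images lie in direct sum inside $S_{2k}(\Gamma_0(4M))$; pulling this back through the injective map $\psi$ shows that $\sum_i B_i$, the $S^{+,\mathrm{new}}$-triple, and the $S^{-}$-pair are mutually in direct sum, which justifies the direct-sum notation in the definition of $E$. Together with $\psi(E) = S_{2k}^{\mathrm{old}}(\Gamma_0(4M))$ and the injectivity of $\psi$, this gives that $\psi$ restricts to an isomorphism $E \xrightarrow{\sim} S_{2k}^{\mathrm{old}}(\Gamma_0(4M))$. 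The main obstacle is the level bookkeeping of the second paragraph: one must check that the overlapping, non-direct sum $\sum_i B_i$ recovers exactly the isotypic pieces for $L\notin\{M,2M,4M\}$, with no gaps and no intrusion into the level-$M$, level-$2M$, or new level-$4M$ parts. This is exactly what the divisibility computation --- that $L \mid 4M_i$ for some $i$ iff the odd part of $L$ is a proper divisor of $M$ --- secures.
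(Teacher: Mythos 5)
Your proof is correct and follows essentially the same route as the paper: the paper likewise combines Corollaries~\ref{C:imap}, \ref{C:2map} and \ref{C:3map} with the decomposition of $S_{2k}^{\mathrm{old}}(\Gamma_0(4M))$ into the level-$4M_i$, level-$M$ and level-$2M$ pieces, though it simply asserts that decomposition while you justify it via the Atkin--Lehner isotypic pieces and the observation that $L\mid 4M_i$ for some $i$ exactly when $L\notin\{M,2M,4M\}$. Your additional verification that the summands of $E$ are genuinely in direct sum (by pulling the disjoint isotypic decomposition back through the injective $\psi$) is a worthwhile detail the paper leaves implicit.
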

\begin{proof}
This follows from Corollary~\ref{C:imap}, \ref{C:2map} and 
and \ref{C:3map} and from the decomposition
\begin{equation*}
\begin{split}
S_{2k}^{\mathrm{old}}(\Gamma_0(4M))&=\left( 
\sum_{i=1}^{k}S_{2k}(\Gamma_0(4M_i))\oplus 
V(p_i)S_{2k}(\Gamma_0(4M_i)) \right) \\
&\oplus 
\left(S_{2k}^{\mathrm{new}}(\Gamma_0(M))\oplus 
V(2)S_{2k}^{\mathrm{new}}(\Gamma_0(M))\oplus 
V(4)S_{2k}^{\mathrm{new}}(\Gamma_0(M))\right)\\
&\oplus
S^{\mathrm{new}}_{2k}(\Gamma_0(2M)) \oplus 
V(2) S^{\mathrm{new}}_{2k}(\Gamma_0(2M))
\end{split}
\end{equation*}
\end{proof}

We now define the minus space to be the orthogonal complement of $E$, 
\[S^{-}_{k+1/2}(4M):=E^{\perp}.\]

\begin{thm}
The space $S^{-}_{k+1/2}(8M)$ has a basis of eigenforms for 
all the operators $T_{q^2}$ where $q$ is an odd prime
satisfying $(q,M)=1$. Under
$\psi$, the space  $S^{-}_{k+1/2}(8M)$ maps isomorphically onto the 
space $S^{\mathrm{new}}_{2k}(\Gamma_0(4M))$. If 
two forms in $S^{-}_{k+1/2}(8M)$ have the same eigenvalues for
all the operators $T_{q^2}$, $(q,2M)=1$, then they are same up to a scalar 
factor. Moreover, $S^{-}_{k+1/2}(8M)$ has strong multiplicity one 
property in the full space of level $8M$. 
\end{thm}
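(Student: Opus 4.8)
The plan is to reduce the entire statement to the Ueda isomorphism $\psi$ of Theorem~\ref{T:Ueda} combined with Atkin--Lehner strong multiplicity one for $S_{2k}(\Gamma_0(4M))$, following the template of \cite[Theorem 5]{B-PII}. The hypothesis I would exploit first is that the preceding proposition identifies $E$ with $\psi^{-1}(S^{\mathrm{old}}_{2k}(\Gamma_0(4M)))$, so that $S^{-}_{k+1/2}(8M)=E^{\perp}$ is the Petersson-orthogonal complement of the $\psi$-preimage of the old space.

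First I would show that $E$, and hence $S^{-}_{k+1/2}(8M)$, is stable under every $T_{q^2}$ with $q$ an odd prime and $(q,M)=1$. Such a $q$ is coprime to $2M$, so Theorem~\ref{T:Ueda} gives $\psi\circ T_{q^2}=T_q\circ\psi$; since $S^{\mathrm{old}}_{2k}(\Gamma_0(4M))$ is $T_q$-stable, $E$ is $T_{q^2}$-stable, and as $T_{q^2}$ is self-adjoint for the Petersson product, so is $E^{\perp}=S^{-}_{k+1/2}(8M)$. The family $\{T_{q^2}\}$ is commuting and self-adjoint, hence $S^{-}_{k+1/2}(8M)$ has an orthogonal basis of simultaneous eigenforms, which is the first assertion.

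The heart of the argument is the identification $\psi(S^{-}_{k+1/2}(8M))=S^{\mathrm{new}}_{2k}(\Gamma_0(4M))$. I would decompose $S_{k+1/2}(\Gamma_0(8M))$ and $S_{2k}(\Gamma_0(4M))$ into joint eigenspaces $V_\lambda$, $W_\lambda$ for the good Hecke operators, indexed by eigenvalue systems $\lambda$ over primes coprime to $2M$; then $\psi$ carries $V_\lambda$ isomorphically onto $W_\lambda$. Because the orthogonal, Hecke-stable pairs $(E,S^{-}_{k+1/2}(8M))$ and $(S^{\mathrm{old}},S^{\mathrm{new}})$ split along these eigenspaces, and $\psi(E\cap V_\lambda)=S^{\mathrm{old}}\cap W_\lambda$, a dimension count gives $\dim(S^{-}_{k+1/2}(8M)\cap V_\lambda)=\dim(S^{\mathrm{new}}_{2k}(\Gamma_0(4M))\cap W_\lambda)$ for each $\lambda$. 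The decisive input is strong multiplicity one (Atkin--Lehner \cite{A-L}): if $\lambda$ is the system of a newform $F$ of exact level $4M$ then no newform of smaller level shares $\lambda$, so $W_\lambda=\C F$ and the dimension equality forces $\psi(S^{-}_{k+1/2}(8M)\cap V_\lambda)=W_\lambda=S^{\mathrm{new}}_{2k}(\Gamma_0(4M))\cap W_\lambda$; for every other $\lambda$ both sides vanish. Summing over $\lambda$ yields the desired isomorphism.

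The remaining claims then drop out. Each nonzero $S^{-}_{k+1/2}(8M)\cap V_\lambda$ is one-dimensional, so two eigenforms sharing all $T_{q^2}$-eigenvalues with $(q,2M)=1$ are proportional, giving multiplicity one. For strong multiplicity one in the full level-$8M$ space, if an eigenform $g\in S_{k+1/2}(\Gamma_0(8M))$ agrees with some $f\in S^{-}_{k+1/2}(8M)$ in almost all $T_{q^2}$-eigenvalues, then $\psi(f)=F$ is a level-$4M$ newform and $\psi(g)$ agrees with $F$ in almost all $T_q$-eigenvalues, so strong multiplicity one at level $4M$ forces $\psi(g)\in\C F$, whence $g\in\psi^{-1}(\C F)=\C f$. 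The main obstacle I would flag is exactly the heart step: $\psi$ is only a Hecke-module isomorphism and not a Petersson isometry, so the equality $\psi(E^{\perp})=(S^{\mathrm{old}})^{\perp}$ is not formal and must be extracted through the eigenspace-by-eigenspace comparison fed by strong multiplicity one.
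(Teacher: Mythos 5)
Your proposal is correct and follows essentially the same route as the paper: the paper's proof (via the level-$8$ case and the cited argument of \cite[Theorem 5]{B-PII}) likewise combines the preceding proposition identifying $\psi(E)=S^{\mathrm{old}}_{2k}(\Gamma_0(4M))$ with the Ueda Hecke-equivariance and Atkin--Lehner strong multiplicity one, comparing joint eigenspaces to deduce $\psi(E^{\perp})=S^{\mathrm{new}}_{2k}(\Gamma_0(4M))$. Your explicit flag that $\psi$ is not an isometry, so the identification must go eigensystem by eigensystem, is exactly the point the cited argument handles.
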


We give the characterization of our minus space.
We have the following proposition.
\begin{prop}\label{prop:u4M}
Let $f \in S_{k+1/2}^{-}(8M)$ be a Hecke eigenform for all the Hecke operators 
$T_{q^2}$, $q$ prime and $q$ coprime to $2M$. Then for any prime $p$ dividing 
$M$, 
$\widetilde{W}_(p^2) = \pm f$, $\widetilde{W}_8(f) = \pm f$.
Moreover, $U_{p^2}(f)=-p^{k-1}\lambda(p)f$ and $U_{4}(f)=0$ where 
$\lambda(p)= \pm 1$.

Consequently, for any $f \in S_{k+1/2}^{-}(8M)$ we have
$\widetilde{Q}_p(f) = -f = \widetilde{Q}'_p(f)$ for all primes 
$p$ dividing $M$ and 
$\widetilde{V}_4f = -f = \widetilde{V}'_4f$.
\end{prop}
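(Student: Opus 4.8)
The plan is to follow the template of Proposition~\ref{prop:u4} (the case $M=1$), upgrading it prime-by-prime and then globalising. First I would fix an eigenform $f\in S_{k+1/2}^{-}(8M)$ for all $T_{q^2}$ with $(q,2M)=1$ and set $F:=\psi(f)$, which by the preceding theorem is a newform in $S_{2k}^{\mathrm{new}}(\Gamma_0(4M))$. For a prime $p\mid M$ the Atkin--Lehner operator $\widetilde{W}_{p^2}$ commutes with every $T_{q^2}$, $(q,2M)=1$, so $\widetilde{W}_{p^2}(f)$ is again an eigenform with the same system of eigenvalues; since $\psi$ intertwines these eigenvalues with those of the newform $F$, strong multiplicity one for newforms \cite{A-L} forces $\psi(\widetilde{W}_{p^2}f)$ to be a scalar multiple of $F$, whence $\widetilde{W}_{p^2}f$ is a scalar multiple of $f$ by injectivity of $\psi$. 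As $\widetilde{W}_{p^2}$ is an involution this gives $\widetilde{W}_{p^2}f=\pm f$, and the identical argument applied to $\widetilde{W}_8$ gives $\widetilde{W}_8 f=\pm f$.

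Next I would pin down the $U$-eigenvalues through the integral-weight picture. Since $F$ is a newform of level $4M$ with $p\parallel 4M$, Atkin--Lehner theory \cite{A-L} gives $U_p(F)=-p^{k-1}\lambda(p)F$ with $\lambda(p)\in\{\pm1\}$ the Atkin--Lehner sign, while $2^2\parallel 4M$ forces $U_2(F)=0$. I would then transport these to half-integral weight using the Shimura lifts $\Sh_t$: for square-free $t$, $\Sh_t f$ is an eigenform sharing the $T_q$-eigenvalues of $F$, hence a scalar multiple of $F$ by \cite{A-L}, and the intertwining $\Sh_t\circ U_{p^2}=U_p\circ\Sh_t$ and $\Sh_t\circ U_4=U_2\circ\Sh_t$ yields $\Sh_t\big(U_{p^2}f+p^{k-1}\lambda(p)f\big)=0$ and $\Sh_t(U_4 f)=0$ for all $t$. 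Since a form all of whose Shimura lifts vanish is zero, I conclude $U_{p^2}(f)=-p^{k-1}\lambda(p)f$ and $U_4(f)=0$. I expect this paragraph to be the main obstacle: it requires the correct normalisation of the intertwining between $U_{p^2}$ and $U_p$, careful bookkeeping of the sign and the power of $p$, and the injectivity of the family $\{\Sh_t\}$.

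With the eigenvalues in hand, the ``consequently'' identities are essentially formal. At the prime $2$ I would repeat verbatim the end of the proof of Proposition~\ref{prop:u4}: from $U_4 f=0$, $\widetilde{W}_8 f=q(\widehat{\mathcal{U}_2})f=\pm f$ and $q(\mathcal{T}_1)=q(\sqrt2\,\mathcal{U}_1\widehat{\mathcal{U}_2})$ one gets $q(\mathcal{U}_1)f=0$, and then the relation $\widehat{\mathcal{U}_1}^2=1+\mathcal{V}$ (Theorem~\ref{thm:genrelchi_1}(1)) gives $\widetilde{V}_4 f=-f$; conjugating by $\widetilde{W}_8$ and using $\widetilde{W}_8 f=\pm f$ gives $\widetilde{V}'_4 f=-f$. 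At an odd prime $p\mid M$ I would feed $U_{p^2}f=-p^{k-1}\lambda(p)f$ and $\widetilde{W}_{p^2}f=\pm f$ into the local relations at $p$ between $\widetilde{Q}_p$, $\widetilde{Q}'_p$, $\widetilde{W}_{p^2}$ and $U_{p^2}$ established in \cite{B-PII} (the $p$-adic Hecke algebra is unchanged in passing from level $4M$ to $8M$); exactly as in the level-$4M$ analysis of \cite{B-PII} these force $\widetilde{Q}_p f=-f=\widetilde{Q}'_p f$.

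Finally I would remove the eigenform hypothesis. By the preceding theorem $S_{k+1/2}^{-}(8M)$ has a basis of simultaneous eigenforms for the operators $T_{q^2}$, $(q,2M)=1$, and all of $\widetilde{Q}_p,\widetilde{Q}'_p,\widetilde{V}_4,\widetilde{V}'_4$ are linear; since the displayed identities $\widetilde{Q}_p f=-f=\widetilde{Q}'_p f$ and $\widetilde{V}_4 f=-f=\widetilde{V}'_4 f$ hold on each basis element, they hold for every $f\in S_{k+1/2}^{-}(8M)$.
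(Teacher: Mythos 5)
Your proposal is correct and is essentially the argument the paper intends: the paper's proof is a one-line deferral to the $M=1$ case (Proposition~\ref{prop:u4}) and to \cite[Propositions 6.12--6.14]{B-PII}, and your write-up reproduces exactly those steps (Atkin--Lehner plus strong multiplicity one through $\psi$ for the $\pm1$ eigenvalues, Shimura lifts to transfer the $U_p$-eigenvalues of the newform $F$ to $U_{p^2}$ and $U_4$, the local Hecke relations for the ``consequently'' clause, and linearity over a basis of eigenforms). No gaps; if anything your version is more explicit than the paper's.
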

\begin{proof}
The proof follows similar to the proof of Proposition~\ref{prop:u4M}
and proofs of \cite[Proposition 6.12, 6.13, 6.14]{B-PII}.
\end{proof}

\begin{thm}\label{thm:8M}
Let $f\in S_{k+1/2}(8M)$. Then $f\in S^{-}_{k+1/2}(8M)$ if and only if
$\widetilde{Q}_p(f) = -f = \widetilde{Q}'_p(f)$ for every prime 
$p$ dividing $M$ and 
$\widetilde{V}_4(f) = -f = \widetilde{V}'_4(f)$.
\end{thm}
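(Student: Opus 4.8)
The proof is in two implications, and the forward direction is immediate: if $f \in S^{-}_{k+1/2}(8M)$, then Proposition~\ref{prop:u4M} yields exactly $\widetilde{Q}_p(f) = -f = \widetilde{Q}'_p(f)$ for every prime $p \mid M$ together with $\widetilde{V}_4(f) = -f = \widetilde{V}'_4(f)$, which is the asserted list of conditions.

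For the converse, suppose $f \in S_{k+1/2}(\Gamma_0(8M))$ satisfies all these eigenvalue conditions. Since $S^{-}_{k+1/2}(8M)$ was defined as the orthogonal complement $E^{\perp}$, we have $S_{k+1/2}(\Gamma_0(8M)) = E \oplus E^{\perp}$, so it suffices to prove $f \perp E$, i.e. that $f$ is orthogonal to each direct summand of $E$. The plan mirrors the proof of Theorem~\ref{thm:8}: each summand of $E$ lies in an eigenspace of one of the self-adjoint operators $\widetilde{Q}_{p_i}$, $\widetilde{Q}'_{p_i}$, $\widetilde{V}_4$, $\widetilde{V}'_4$ for an eigenvalue other than $-1$, whereas $f$ lies in the $-1$ eigenspace of that same operator; orthogonality of eigenvectors of a self-adjoint operator for distinct eigenvalues then gives the claim summand by summand.

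In detail, I would first treat the oldform blocks $B_i$. By \cite[Corollary 4.3(4)]{B-PII} the space $S_{k+1/2}(\Gamma_0(8M_i))$ lies in the $p_i$-eigenspace of $\widetilde{Q}_{p_i}$; since $p_i \neq -1$ and $\widetilde{Q}_{p_i}$ is self-adjoint, it is orthogonal to $f$. Writing $\widetilde{Q}'_{p_i} = \widetilde{W}_{p_i^2}\widetilde{Q}_{p_i}\widetilde{W}_{p_i^2}$ and using $\widetilde{W}_{p_i^2}^{2} = 1$, the twisted block $\widetilde{W}_{p_i^2}S_{k+1/2}(\Gamma_0(8M_i))$ lies in the $p_i$-eigenspace of $\widetilde{Q}'_{p_i}$ and so is orthogonal to $f$ as well; thus $f \perp B_i$ for every $i$. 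For the remaining summands I would use the level-$4$ operators. The spaces $S^{+,\mathrm{new}}_{k+1/2}(4M)$, $\widetilde{W}_4 S^{+,\mathrm{new}}_{k+1/2}(4M)$ and $S^{-}_{k+1/2}(4M)$ all lie inside $S_{k+1/2}(\Gamma_0(4M))$, hence in the $+1$ eigenspace of $\widetilde{V}_4$ by Corollary~\ref{cor:plus}, so each is orthogonal to $f$. Finally, $\widetilde{W}_8\widetilde{W}_4 S^{+,\mathrm{new}}_{k+1/2}(4M)$ and $\widetilde{W}_8 S^{-}_{k+1/2}(4M)$ are $\widetilde{W}_8$-images of $+1$ eigenvectors of $\widetilde{V}_4$, so by $\widetilde{V}'_4 = \widetilde{W}_8\widetilde{V}_4\widetilde{W}_8$ and $\widetilde{W}_8^{2} = 1$ they lie in the $+1$ eigenspace of $\widetilde{V}'_4$ and are orthogonal to $f$. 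Combining, $f \perp E$, whence $f \in E^{\perp} = S^{-}_{k+1/2}(8M)$.

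The only point needing genuine care is the bookkeeping that every summand of $E$ is captured by exactly one of the four self-adjoint operators at the correct (non-$(-1)$) eigenvalue; the eigenvalue containments are all recorded in the excerpt or inherited from \cite{B-PII}, and the self-adjointness of $\widetilde{Q}_p$, $\widetilde{Q}'_p$ (from \cite{B-PII}) and of $\widetilde{V}_4$, $\widetilde{V}'_4$ (established above) then renders each orthogonality automatic. In contrast to the isomorphism theorems, this direction requires no dimension count, since membership in $E^{\perp}$ follows directly from the orthogonality.
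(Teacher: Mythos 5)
Your proof is correct and follows the same route as the paper: the forward implication from Proposition~\ref{prop:u4M}, and the converse by showing $f\perp E$ summand by summand using the self-adjointness of $\widetilde{Q}_p,\ \widetilde{Q}'_p,\ \widetilde{V}_4,\ \widetilde{V}'_4$ and the fact that each block of $E$ sits in an eigenspace with eigenvalue ($p_i$ or $+1$) different from $-1$. The paper's own proof is a two-line sketch of exactly this argument; your write-up simply supplies the bookkeeping it leaves implicit.
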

\begin{proof}
One side implication follows from Proposition~\ref{prop:u4M}. 
For the converse, we use Corollary~\ref{cor:plus}, that 
$S_{k+1/2}(\Gamma_0(8M/p))$ is contained in the $p$ eigenspace 
of $\widetilde{Q}_{p}$ for all $p$ dividing $M$ and that the 
operators are self-adjoint.
\end{proof}

\begin{cor}\label{cor:minus1}
If $f = \sum_{n=1}^{\infty} a_n q^n \in S^{-}_{k+1/2}(8M)$ then 
$a_n=0$ for $(-1)^k n \equiv 0, 1 \pmod{4}$. 

In particular the projection map $\wp_k$ is identically zero on 
the minus space $S^{-}_{k+1/2}(8M)$.
\end{cor}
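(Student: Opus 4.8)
The plan is to read off both conclusions directly from the eigenvalue characterization of the minus space, with no new Fourier-coefficient computation required. The essential point is that the vanishing condition $a_n=0$ for $(-1)^k n \equiv 0,1\pmod 4$ has already been identified, in Corollary~\ref{cor:plus2}, as the defining property of the $-1$ eigenspace of $\widetilde{V}'_4$; so the corollary amounts to matching the minus space against that eigenspace.

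First I would invoke Theorem~\ref{thm:8M}: any $f\in S^{-}_{k+1/2}(8M)$ satisfies $\widetilde{V}'_4(f)=-f$, whence $f$ lies in the $-1$ eigenspace of $\widetilde{V}'_4$. By Corollary~\ref{cor:plus2} this eigenspace consists precisely of those $f=\sum_{n} a_n q^n$ for which $a_n=0$ whenever $(-1)^k n\equiv 0,1\pmod 4$. Combining these two statements yields the first assertion at once. For the second assertion, recall that $\wp_k$ sends $\sum_{n} a_n q^n$ to $\sum_n^{(1)} a_n q^n$, the sum taken over $n$ with $(-1)^k n\equiv 0,1\pmod 4$. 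Since every such coefficient vanishes by the first part, $\wp_k(f)=0$; equivalently this is exactly the content of Corollary~\ref{cor:plus3} applied to $f$, which sits in the $-1$ eigenspace of $\widetilde{V}'_4$.

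There is essentially no obstacle at this stage: the substantive work has already been carried out upstream, in Theorem~\ref{thm:8M}, where membership in the minus space is translated into the eigenvalue relations $\widetilde{V}_4 f=-f=\widetilde{V}'_4 f$ together with the $\widetilde{Q}_p$, $\widetilde{Q}'_p$ relations, and in the comparison with Kohnen's projection map that produced Corollaries~\ref{cor:plus2} and~\ref{cor:plus3}. The only point I would take care to confirm is that these latter corollaries, which arose in the discussion of the operator $P_8$, are genuinely available at level $8M$ rather than only at level $8$; this holds because $\widetilde{V}'_4$ and $P_8$ are built from $2$-adic data insensitive to the odd part $M$, and indeed Corollary~\ref{cor:plus2} is already stated at level $8M$. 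Thus the corollary follows by directly chaining Theorem~\ref{thm:8M} into Corollary~\ref{cor:plus2} and then into Corollary~\ref{cor:plus3}.
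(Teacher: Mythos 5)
Your proposal is correct and follows essentially the same route as the paper, which likewise deduces the corollary by chaining Theorem~\ref{thm:8M} with the identification of the $-1$ eigenspace of $\widetilde{V}'_4$ via the Fourier-coefficient description of $P_8$ (Corollaries~\ref{cor:plus1} and~\ref{cor:plus2}). Your added check that these corollaries are stated at level $8M$, not just level $8$, is accurate and harmless.
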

\begin{proof}
The proof follows from the above theorem and Corollary~\ref{cor:plus1} and 
\ref{cor:plus2}. 
\end{proof}

\begin{remark}\label{rem:MMR}
The above corollary contradicts some of the results of \cite{M-M-R}. 
In particular in Section 3 of their paper 
they assert that the projection map $\wp_k$ on the
newforms at level $8M$ is itself, i.e. their newspace at level 
$8M$ (which corresponds to $S^{\mathrm{new}}_{2k}(\Gamma_0(4M))$) 
satisfies the plus space condition. However our results above and 
the example below presents a contrary picture : if 
$f = \sum_{n=1}^{\infty} a_n q^n$ is in the newspace at level $8M$ then 
$a_n=0$ for $(-1)^k n \equiv 0, 1 \pmod{4}$, i.e., $\wp_k(f)=0$.
\end{remark}

\begin{remark}
We note that Theorem~\ref{thm:8} and \ref{thm:8M} are analogous 
to \cite[Theorem 9]{B-P} in the integral weight scenario. 
Indeed \ref{thm:8M} can be restated as 
$f\in S^{-}_{k+1/2}(8M)$ if and only if
$\widetilde{Q}_p(f) = -f = \widetilde{Q}'_p(f)$ for every prime 
$p$ dividing $M$ and 
$q(\mathcal{U}_1)f = 0 = \widetilde{W}_8q(\mathcal{U}_1)\widetilde{W}_8(f)$.
\end{remark}

\begin{remark}
We note that the decomposition of the space $S_{k+1/2}(\Gamma_0(8M))$ 
is completely analogous to that of $S_{2k}(\Gamma_0(4M))$ when we look
at it through the local Hecke algebra. We illustrate this in the case 
$M=1$.
\begin{equation*}
\begin{split}
S_{2k}(\Gamma_0(4)) &= (S_{2k}(\Gamma_0(1)) \oplus 
 q(\mathcal{U}_1)S_{2k}(\Gamma_0(1)) \oplus  q(\mathcal{U}_2)S_{2k}(\Gamma_0(1)))\\
 &\oplus (S^{\mathrm{new}}_{2k}(\Gamma_0(2))
 \oplus 
 q(\mathcal{U}_2)S^{\mathrm{new}}_{2k}(\Gamma_0(2)))
 \oplus S^{\mathrm{new}}_{2k}(\Gamma_0(4)).
\end{split}
\end{equation*}
In the above $\mathcal{U}_1,\ \mathcal{U}_2$ are elements in 
the Hecke algebra $H(\GL_2(\Q_2)//K_0(4))$ 
coming from the double cosets of $\smallmat{0}{-1}{2}{0},\ \smallmat{0}{-1}{4}{0}$ 
respectively. 
Also it follows from \cite{B-P} that 
$q(\mathcal{U}_2)q(\mathcal{U}_1)S_{2k}(\Gamma_0(1)) = 
q(\mathcal{U}_1)S_{2k}(\Gamma_0(1))$. 

Now let's look at the space $S_{k+1/2}(\Gamma_0(8M))$. We have 
\begin{equation*}
\begin{split} 
S_{k+1/2}(\Gamma_0(8)) &= (A^{+}(4) \oplus q(\mathcal{U}_1)A^{+}(4)
\oplus q(\mathcal{U}_2)A^{+}(4))\\
&\oplus (S^{-}(4) \oplus q(\mathcal{U}_2)S^{-}(4))
\oplus S^{-}(8).
\end{split}
\end{equation*}
Here $\mathcal{U}_1,\ \mathcal{U}_2$ are elements in 
the Hecke algebra $H(K_0^2(8),\chi_1)$ coming from 
$w(2^{-1}),\ w(2^{-2})$ respectively. 
Recall from \cite{B-PII} that 
$A^{+}(4) = \widetilde{W}_4S^{+}(4) = q(\mathcal{U}_1)S^{+}(4)$. 
Further by Proposition~\ref{prop:dsum},
$q(\mathcal{U}_2)q(\mathcal{U}_1)A^{+}(4) = 
q(\mathcal{U}_2)S^{+}(4) = S^{+}(4) = q(\mathcal{U}_1)A^{+}(4)$.
\end{remark}

\begin{ex}
The space $S_{3/2}(\Gamma_0(152))$ is $8$-dimensional and 
there are four primitive Hecke eigenforms of weight $2$ and level 
dividing $76$, namely $F_{19}$ of level $19$, $G_{38}$, $H_{38}$ 
of level $38$ and $P_{76}$ of level $76$. 
We have
\begin{small}
 \[S_{3/2}(\Gamma_0(152)) = S_{3/2} (152, F_{19}) 
\oplus S_{3/2}(152, G_{38})
\oplus S_{3/2}(152, H_{38})\oplus S_{3/2}(152, K_{76}).\]
\end{small}
We compute the Shimura decomposition \cite{Purkait}.
As we would expect from the above remark, $S_{3/2}(152, F_{19})$ is $3$-dimensional
space and is spanned by 
\begin{equation*}
\begin{split}
f_1 &= q+ q^5 - 2q^6 - q^9 - q^{17} + 2q^{25} + 2q^{30} + 2q^{42} -
    3q^{45} + O(q^{50})\\
f_2 &= q^4 - 2q^{11} - 2q^{16} + 2q^{19} + q^{20} - 2q^{24} + 3q^{28} + 2q^{35}
    - q^{36} + O(q^{40})\\
f_3 &= q^7 - q^{11} - 2q^{16} + q^{19} + 2q^{28} + q^{35} - 2q^{39} - q^{43} +
    2q^{44} - q^{47} + O(q^{50}),
\end{split}
\end{equation*}
$S_{3/2}(152, G_{38})$ is $2$-dimensional
space and is spanned by
\begin{equation*}
\begin{split}
g_1 &= q - 2q^5 + q^6 + 2q^9 - q^{17} - q^{25} - 3q^{26} - 4q^{30} +
    3q^{38} + 5q^{42} + O(q^{50})\\
g_2 &= q^4 + q^7 - q^{16} - 2q^{20} - 3q^{23} + q^{24} - q^{28} + 2q^{36} +
    q^{39} + 2q^{47} + O(q^{50}),
\end{split}
\end{equation*}
$S_{3/2}(152, H_{38})$ is $2$-dimensional
space and is spanned by
\begin{equation*}
\begin{split}
h_1 &= q^2 + 2q^{10} - 3q^{13} - q^{14} - 2q^{18} - q^{21} + 2q^{22} + q^{29} +
O(q^{30})\\
h_2 &= q^3 - q^8 + q^{12} - q^{19} - q^{27} - q^{32} - 2q^{40} + q^{48} + O(q^{50})
\end{split}
\end{equation*}
and $S_{3/2}(152, K_{76})$ is $1$-dimensional
space and is spanned by
\[k_1 = q^2 - q^{10} - q^{14} + q^{18} + 2q^{21} - q^{22} - 2q^{29} - 2q^{33} -
    q^{34} + 2q^{37} + q^{38} - 2q^{41} + O(q^{50}).\]
The Kohnen plus space $S_{3/2}^{+}(152)$ is $4$-dimensional and it is 
spanned by $\{f_2, f_3, g_2, h_2 \}$.
We further note that $S_{3/2}(76, F_{19})$ is $2$-dimensional and spanned by 
$\{f_1+f_3, f_2-f_3\}$ and $S_{3/2}^{-}(76)$ is $2$-dimensional spanned 
by $\{g_1-g_2, h_1-h_2\}$.
The minus space at level $152$, $S_{3/2}^{-}(152)$ 
is $1$-dimensional spanned by $k_1$ and is 
Shimura equivalent to $K_{76}$. Note that $k_1$ satisfies the Fourier 
coefficient condition as noted in Corollary \ref{cor:minus1}.
\end{ex}

We finally look at the minus space of level $8M$ with character $\kro{2}{\cdot}$. 
Following Ueda~\cite{Ueda2}, we define  
$\widetilde{\tau}_8 : S_{k+1/2}(\Gamma_0(8M)) \longrightarrow
S_{k+1/2}(\Gamma_0(8M), \kro{2}{\cdot})$ given by the action
$|[\mat{8a}{b}{8Mc}{8d}, \kro{Mc}{d}8^{1/4}(i(Mcz +d))^{1/2}]_{k+1/2}$ 
where $a,\ b,\ c,\ d$ are such that $8ad-Mbc =1$ and $b \equiv d \equiv 1  \pmod{8}$. 
The above action is independent of the choice of 
$a,\ b,\ c,\ d$ satisfying the conditions. It is routine to check that 
$\widetilde{\tau}_8 \Delta_0(8M) \widetilde{\tau}_8^{-1} = 
\Delta_0(8M, \kro{2}{\cdot})$ and that $\widetilde{\tau}_8^{2} = 1$ on 
$S_{k+1/2}(\Gamma_0(8M))$. Further we check that 
$\widetilde{\tau}_8$ commutes with Hecke operators $T_{p^2}$ for all $p$ odd,
giving a Hecke isomorphism from $S_{k+1/2}(\Gamma_0(8M))$ to 
$S_{k+1/2}(\Gamma_0(8M), \kro{2}{\cdot})$.

Define the minus space $S^{-}_{k+1/2}(8M, \kro{2}{\cdot}) 
:= \widetilde{\tau}_8 S^{-}_{k+1/2}(8M)$. 
It follows that $S^{-}_{k+1/2}(8M, \kro{2}{\cdot})$ maps Hecke isomorphically onto 
$S^{\mathrm{new}}_{2k}(\Gamma_0(4M))$.  
Further $S^{-}_{k+1/2}(8M, \kro{2}{\cdot})$ has a similar characterization :
$g \in S^{-}_{k+1/2}(8M, \kro{2}{\cdot})$ if and only if 
$\widetilde{\tau}_8\widetilde{Q}_p\widetilde{\tau}_8^{-1}(g) = -g = 
\widetilde{\tau}_8\widetilde{Q}'_p\widetilde{\tau}_8^{-1}(g)$ for every prime 
$p$ dividing $M$ and 
$\widetilde{\tau}_8\widetilde{V}_4\widetilde{\tau}_8^{-1}(g) = -g = 
\widetilde{\tau}_8\widetilde{V}'_4\widetilde{\tau}_8^{-1}$.

Let $\mathcal{Z}_1' \in H(\ov{K_0^2(8)}, \chi_2)$.
\begin{prop}
The action of $\widetilde{\tau}_8\widetilde{V}_4\widetilde{\tau}_8^{-1}$ 
equates the action of $q(\mathcal{Z}_1')$ on 
$S_{k+1/2}(\Gamma_0(8M), \kro{2}{\cdot})$. 
In particular $S^{-}_{k+1/2}(8M, \kro{2}{\cdot})$ is contained in the 
$-1$ eigenspace of $q(\mathcal{Z}_1')$.
\end{prop}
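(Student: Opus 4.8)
The plan is to realize the conjugated operator as a single slash operator on $\mathcal G$ and compare it directly with $q(\mathcal Z_1')$. Write $\xi_8 = (\smallmat{8a}{b}{8Mc}{8d}, \phi_{\xi_8})$ for the element of $\mathcal G$ defining $\widetilde\tau_8$, with $\phi_{\xi_8}(z) = \kro{Mc}{d}8^{1/4}(i(Mcz+d))^{1/2}$ and $8ad-Mbc=1$, $b\equiv d\equiv 1\pmod 8$, and write $V^\ast = (\smallmat{1}{0}{4M}{1}, (4Mz+1)^{1/2})$ for the element defining $\widetilde V_4 = q(\mathcal V)$ (Proposition~\ref{prop:trans1}(4)). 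Using the composition law for the slash action, $\widetilde\tau_8^{-1}=|[\xi_8^{-1}]$, one has for every $g\in S_{k+1/2}(\Gamma_0(8M),\kro{2}{\cdot})$
\[
\widetilde\tau_8\,\widetilde V_4\,\widetilde\tau_8^{-1}(g)=g\,|[\,\xi_8^{-1} V^\ast \xi_8\,]_{k+1/2},
\]
so everything reduces to identifying the single $\mathcal G$-element $\xi_8^{-1}V^\ast\xi_8$.

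First I would compute the underlying matrix product. Since $\det\smallmat{8a}{b}{8Mc}{8d}=8$, one finds $\xi_8^{-1}$ explicitly and then, using $8ad-Mbc=1$,
\[
\smallmat{d}{-b/8}{-Mc}{a}\smallmat{1}{0}{4M}{1}\smallmat{8a}{b}{8Mc}{8d}=\smallmat{1-4Mab}{-Mb^2/2}{32Ma^2}{1+4Mab}.
\]
Because $M$ and $b\equiv 1\pmod 8$ are odd, $1-Mb^2$ is even, and one checks the factorization
\[
\smallmat{1-4Mab}{-Mb^2/2}{32Ma^2}{1+4Mab}=\smallmat{1}{-1/2}{0}{1}\,\gamma,\qquad \gamma:=\smallmat{1-4Mab+16Ma^2}{\frac{1-Mb^2}{2}+2Mab}{32Ma^2}{1+4Mab}\in\Gamma_0(8M),
\]
the lower-left entry being $8M\cdot 4a^2$. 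The factor $\smallmat{1}{-1/2}{0}{1}$ is exactly the matrix part of the element $Z^\ast=(\smallmat{1}{-1/2}{0}{1},1)$ whose slash action is $g\mapsto g(z-\tfrac12)=q(\mathcal Z_1')(g)$ (Proposition~\ref{prop:trans2}(4)).

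It then remains to upgrade this matrix identity to the identity $\xi_8^{-1}V^\ast\xi_8=Z^\ast\gamma^\ast$ in $\mathcal G$ with $\gamma^\ast\in\Delta_0(8M,\kro{2}{\cdot})$. Setting $\gamma^\ast:=(Z^\ast)^{-1}\xi_8^{-1}V^\ast\xi_8$, whose matrix part is $\gamma\in\Gamma_0(8M)$, I would compute its automorphy factor directly from the group law of $\mathcal G$ — composing the explicit factor $\phi_{\xi_8}$ of $\xi_8^{\pm1}$ with the factor $(4Mz+1)^{1/2}$ of $V^\ast$ — and check that it equals the factor $j(\gamma,z)$ attached to the character $\kro{2}{\cdot}$. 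This is entirely in the spirit of the already-verified intertwining $\widetilde\tau_8\Delta_0(8M)\widetilde\tau_8^{-1}=\Delta_0(8M,\kro{2}{\cdot})$, the two copies of $\phi_{\xi_8}$ serving to convert the trivial character into the quadratic twist $\kro{2}{\cdot}$. Granting this, $\gamma^\ast$ acts trivially on $g$, so $\widetilde\tau_8\widetilde V_4\widetilde\tau_8^{-1}=q(\mathcal Z_1')$. The main obstacle is precisely this factor computation: the matrix step is routine, but tracking the $\varepsilon_d$, Kronecker-symbol, and square-root terms through the conjugation and confirming they assemble into $j(\gamma,z)$ for $\kro{2}{\cdot}$ is the delicate point.

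Finally, the ``in particular'' clause is immediate from the first part together with Proposition~\ref{prop:u4M}. For $f\in S^{-}_{k+1/2}(8M)$ that proposition gives $\widetilde V_4 f=-f$, so for $g=\widetilde\tau_8 f\in S^{-}_{k+1/2}(8M,\kro{2}{\cdot})$ we obtain $q(\mathcal Z_1')g=\widetilde\tau_8\widetilde V_4\widetilde\tau_8^{-1}g=\widetilde\tau_8\widetilde V_4 f=-\widetilde\tau_8 f=-g$, so the minus space lies in the $-1$ eigenspace of $q(\mathcal Z_1')$.
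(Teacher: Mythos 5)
Your proposal follows the same route as the paper: realize $\widetilde{\tau}_8\widetilde{V}_4\widetilde{\tau}_8^{-1}$ as a single slash operator, compute the matrix $\xi_8^{-1}V^{\ast}\xi_8=\smallmat{1-4Mab}{-Mb^2/2}{32Ma^2}{1+4Mab}$, peel off $\smallmat{1}{-1/2}{0}{1}$, and show the remaining element lies in $\Delta_0(8M,\kro{2}{\cdot})$; your matrix algebra checks out and is in fact more explicit than what the paper records, and the ``in particular'' clause is handled correctly via Proposition~\ref{prop:u4M}. The only caveat is that the step you defer as ``the delicate point'' --- verifying that the automorphy factor of $\gamma^{\ast}$ equals $j(\gamma,z)$ for the character $\kro{2}{\cdot}$ --- is precisely the substantive content of the paper's proof, so the argument is not finished until it is carried out. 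It is, however, short: the leftover matrix has lower-left entry $C_1=32Ma^2$ and lower-right entry $D_1\equiv 1\pmod{4M}$, whence $\epsilon_{D_1}=1$ and $\kro{C_1}{D_1}=\kro{2}{D_1}\kro{M}{D_1}=\kro{2}{D_1}$ by reciprocity, so that $j(\gamma,z)=\kro{2}{D_1}\,\epsilon_{D_1}^{-1}\kro{C_1}{D_1}(C_1z+D_1)^{1/2}=(C_1z+D_1)^{1/2}$, which is exactly the factor produced by the group law in $\mathcal{G}$. With that computation inserted, your proof is complete and essentially identical to the paper's.
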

\begin{proof}
By Proposition~\ref{prop:trans1} and \ref{prop:trans2}, 
$\widetilde{V}_4$ acts by $|[\mat{1}{0}{4M}{1}, 1]_{k+1/2}$ and
$q(\mathcal{Z}_1')$ acts by
$|[\mat{1}{-1/2}{0}{1}, 1]_{k+1/2}$. It is easy to see that  
\[\widetilde{\tau}_8^{-1}[\mat{1}{0}{4M}{1}, 1]\widetilde{\tau}_8\cdot 
[\mat{1}{1/2}{0}{1}, 1] = [\mat{*}{*}{C_1}{D_1}, (C_1z +D_1)^{1/2}]\]
where $C_1 = 32Mx^2$ and $D_1 = -Mwy+16Mx^2+4Myx+8zx$. As 
$D_1 \equiv 1 \pmod{4}$, 
$\epsilon_{D_1} =1$. Further $\kro{C_1}{D_1}= \kro{2M}{D_1} = \kro{2}{D_1}\kro{8xz}{M} = \kro{2}{D_1}$. 
Thus the right hand side belongs to $\Delta_0(8M, \kro{2}{\cdot})$ and we are done.
\end{proof}

\begin{remark}
We note that when $M=1$, the action of $\widetilde{\tau}_8$ is same as 
that of $|[\mat{0}{-1}{8}{0}, 8^{1/4}(-iz)^{1/2}]_{k+1/2}$ and we can check that 
$\widetilde{\tau}_8\widetilde{W}_8\widetilde{\tau}_8^{-1}= 
q(\widehat{\mathcal{U}_1})$ (Proposition~\ref{prop:trans2}) on $S_{k+1/2}(\Gamma_0(8), \kro{2}{\cdot})$, 
recall $\widehat{\mathcal{U}_1} \in H(\ov{K_0^2(8)}, \chi_2)$ is an involution. Thus, 
$g \in S^{-}_{k+1/2}(8, \kro{2}{\cdot})$ if and only if 
$q(\mathcal{Z}_1')(g) = -g = 
q(\widehat{\mathcal{U}_1}\mathcal{Z}_1'\widehat{\mathcal{U}_1})(g)$.
\end{remark}


\begin{thebibliography}{}

\bibitem{A-L} A.\ O.\ L.\ Atkin and J.\ Lehner,
{\em Hecke operators on $\Gamma_0(m)$},
Math.\ Ann.\ {\bf 185} (1970), 134--160.

\bibitem{B-P} E.\ M.\ Baruch and S.\ Purkait,
{\em Hecke algebras, new vectors and newforms on 
$\Gamma_0(m)$}, 
Math.\ Zeit.\ {\bf 287} (2017), no. 1-2, 705-733..

\bibitem{B-PII} E.\ M.\ Baruch and S.\ Purkait,
{\em Newforms of half-integral weight: the minus space counterpart},
arXiv:1609.06481v2, 1--44.

\bibitem{Gelbart} S.\ Gelbart,
{\em Weil's representation and the spectrum of the metaplectic group},
Lecture Notes in Math.\ {\bf 530}, Springer, Berlin, 1976.

\bibitem{Kohnen1} W.\ Kohnen,
{\em Modular forms of half-integral weight on $\Gamma_0(4)$}, 
Math.\ Ann.\ {\bf 248} (1980), 249--266. 

\bibitem{Kohnen2} W.\ Kohnen,
{\em Newforms of half-integral weight},
J.\ Reine Angew.\ Math.\ {\bf 333} (1982), 32--72.

\bibitem{L-S} H.\ Y.\ Loke and G.\ Savin,
{\em Representations of the two-fold central
extension of $\SL_2(\Q_2)$},
Pacific J.\ Math.\ {\bf 247} (2010), 435--454.

\bibitem{M-R-V} M.\ Manickam, B.\ Ramakrishnan, T.\ Vasudevan, 
{\em On the theory of newforms of half-integral weight},
J.\ Number Theory {\bf 34} (1990), 210--224.

\bibitem{M-M-R} M.\ Manickam, J.\ Meher, B.\ Ramakrishnan, 
{\em Theory of newforms of half-integral weight},
Pacific J.\ Math.\ {\bf 274} (2015), 125--139.

\bibitem{Niwa} S.\ Niwa,
{\em On Shimura's trace formula}, 
Nagoya Math.\ J.\ {\bf 66} (1977), 183--202.

\bibitem{Purkait} S.\ Purkait, 
{\em On Shimura's decomposition}, 
Int.\ J.\ Number Theory {\bf 9} (2013), 1431-1445.


\bibitem{Shimura} G.\ Shimura,
{\em On modular forms of half integral weight}, 
Ann.\ of Math.\ {\bf 97} (1973), 440--481.

\bibitem{Ueda} M.\ Ueda, 
{\em The decomposition of the spaces of cusp forms
of half-integral weight and trace formula
of Hecke operators},
J.\ Math.\ Kyoto Univ.\ {\bf 28-3} (1988), 505--555.

\bibitem{Ueda2} M.\ Ueda, 
{\em On twisting operators and newforms of half-integral weight},
Nagoya Math.\ J.\ {\bf 131} (1993), 135--205.

\bibitem{U-Y} M.\ Ueda and S.\ Yamana,
{\em On newforms for Kohnen plus spaces},
Math.\ Z.\ {\bf 264} (2010), 1--13.

\bibitem{Ueda-un} M.\ Ueda, 
{\em Newforms of half-integral weight in the case of level $2^m$}, 
(2011), 1--7.

\bibitem{Waldspurger} J.-L.\ Waldspurger,
{\em Sur les coefficients de Fourier des formes modulaires de 
poids demi-entier}, J.\ Math.\ Pures Appl.\ (9) {\bf 60} (1981), 375--484. 
\end{thebibliography}
\end{document}